\theoremstyle{definition}
\newtheorem{definition}{Definition}[section]
\newtheorem{theorem}{Theorem}
\newtheorem{assumption}{Assumption}
\newtheorem{corollary}{Corollary}[theorem]
\newtheorem{remark}{Remark}
\newtheorem{lemma}[theorem]{Lemma}
\newenvironment{assumption*}
 {\expandafter\def\expandafter\theassumption\expandafter{\theassumption*}\assumption}
 {\endassumption}
\newcommand{\RNum}[1]{\uppercase\expandafter{\romannumeral #1\relax}}
\newtheorem*{theorem*}{Theorem}
\newtheorem*{corollary*}{Corollary}
\title{Optimal Learning Rates for Kernel Ridge Regression with a Fourier Capacity Condition
}
\author{
Prem Talwai \thanks{corresponding author}\\
Operations Research Center \\
MIT \\
\texttt{email}:talwai@mit.edu
\AND
David Simchi-Levi\\
Institute for Data, Systems, and Society \\
MIT  \\
\texttt{email}:dslevi@mit.edu
}
\begin{document}

\maketitle

\begin{abstract}
We derive minimax adaptive rates for a new, broad class of Tikhonov-regularized learning problems in Hilbert scales under general source conditions. Our analysis does not require the regression function to be contained in the hypothesis class, and most notably does not employ the conventional \textit{a priori} assumptions on kernel eigendecay. Using the theory of interpolation, we demonstrate that the spectrum of the Mercer operator can be inferred in the presence of ``tight'' $L^{\infty}(\mathcal{X})$ embeddings of suitable Hilbert scales. Our analysis utilizes a new Fourier isocapacitary condition, which captures the interplay of the kernel Dirichlet capacities and small ball probabilities via the optimal Hilbert scale function. 
\end{abstract}

\keywords{Mercer Spectrum \and Isoperimetry \and Sobolev embedding \and Kernel Ridge Regression \\
\textbf{MSC:} 46B70 \and 62G08 }

\section{Introduction}
Consider the classical learning problem:
\begin{equation}
\label{Learning Problem}
    Y = f^{*}(X) + \epsilon
\end{equation}
where we wish to estimate the \textit{regression function} $f^{*}$ in the presence of additive noise $\epsilon$ using a dataset $\mathcal{D} = \{(X_1, Y_1), (X_2, Y_2), \ldots (X_n, Y_n)\}$ sampled i.i.d from a distribution $P$ over $\mathcal{X} \times \mathcal{Y}$. Here, we focus on kernel-based regularized least-squares (ridge) regression, i.e. we wish to solve the optimization problem

\begin{equation}
\label{eq: Ridge Regression}
    f_{D, \lambda} = \text{arg}\min_{f \in \mathcal{H}_K} \frac{1}{n}\sum_{i = 1}^n (Y_i - f(X_i))^2 + \lambda||f||^2_{\mathcal{K}}
\end{equation}

where $\mathcal{H}_K \subset L^2(P_{\mathcal{X}})$ is a reproducing kernel Hilbert space (RKHS) with kernel $K(\cdot, \cdot)$. 

\par The performance of a regularization scheme is typically quantified using the expected risk: 
\begin{equation}
\label{Risk Definition}
    R(f_{D, \lambda}) = ||f_{D, \lambda} - f^{*}||^2_{L^2(P)}
\end{equation}

\par This problem has been studied extensively in the statistics and machine learning literature \cite{fischer2020sobolev, caponnetto2007optimal, blanchard2018optimal, rastogi2020tikhonov}. These works primarily focus on establishing optimal convergence rates for $R(f_{D, \lambda})$ under light-tailed noise (typically subgaussian or subexponential) and various \textit{source conditions}, which characterize the regularity/smoothness of the learning target $f^{*}$. The two main approaches for this task have included the \textit{integral operator technique} (e.g. \cite{fischer2020sobolev, blanchard2018optimal, caponnetto2007optimal}) and \textit{empirical process technique} \cite{steinwart2008support, mendelson2010regularization}. While the latter approach enjoys easy adaptability to general convex losses, the former technique more directly exploits the Hilbertian structure of the hypothesis space, and aligns seamlessly with the method of real interpolation between $\mathcal{H}_K$ and $L^2(P_{\mathcal{X}})$ (see section \ref{Hilbert Prelims}). 

\par We consider the hard-learning scenario, where $f^{*} \not \in \mathcal{H}$. This setting has been treated in previous works \cite{fischer2020sobolev, lin2020optimal, blanchard2018optimal, rastogi2017optimal, rastogi2020tikhonov} both for the \textit{H{\"o}lder source condition} \cite{fischer2020sobolev, blanchard2018optimal} where $f^{*}$ lies in some interpolation space $[L^2(P_{\mathcal{X}}), \mathcal{H}_K]_{\theta, 2}$ (for $\theta \in (0, 1)$) and the \textit{general source condition} \cite{rastogi2017optimal} where $f^{*}$ lies in a suitable Hilbert scale (defined in section \ref{Hilbert Prelims}), which correspond to more general interpolation spaces between $L^2(P_{\mathcal{X}})$ and $\mathcal{H}_K$  obtained using a function parameter. While these works treat the problem of misspecification, their analysis hinges on a kernel eigendecay assumption, which characterizes the capacity of the hypothesis space $\mathcal{H}_K$ by the decay rate of the eigenvalues of the kernel Mercer operator. While this spectral assumption has been ubiquitous in the statistical analysis of kernel ridge regression \cite{lu2020balancing, blanchard2018optimal, caponnetto2007optimal}, it depends crucially on the data-generating measure $P$, which is typically unknown in practice. 

In this paper, we replace the classical kernel eigendecay condition with a new \textit{Fourier capacity condition} which characterizes the decay of the Fourier power spectrum of the kernel ($P$-independent) as opposed to its Mercer spectrum ($P$-dependent). Indeed, our capacity assumption relates to $P$ only through the eigenbasis of the Mercer operator, which may be shared by several different kernels and whose asymptotic behavior often depends only on the infimal volume decay of $P$ for small balls (see Lemmas \ref{Isoperimetric Equivalance}, \ref{Alternative Spectral Estimates}, and Corollary \ref{Alternative Spectral Estimates with Escape Rate} in \ref{Ridge Assumptions} and discussion therein). Fourier analytic approaches have been applied previously to study the capacity of the hypothesis class \cite{zhou2003capacity, ying2007learnability, smale2004shannon}. However, these works typically either assume a uniform (Lebesgue-equivalent) data-generating measure or particular RKHS structure (i.e a Sobolev space) in order to apply Fourier transform techniques. In this paper, we demonstrate that Fourier techniques remain powerful over general probability spaces, in the presence of the embedding of a suitable Hilbert scale in $L^{\infty}(\mathcal{X})$. Indeed, we demonstrate that if the embedding of an intermediate Hilbert scale is sufficiently ``sharp'' (characterized by the suitability of its index function as an isocapacitary profile; see Lemma \ref{Isoperimetric Equivalance}), then a Gagliardo-Nirenberg type inequality between the scales enables us to estimate kernel eigendecay via the Gelfand widths of the RKHS in $L^{\infty}(\mathcal{X})$. To the best of our knowledge, this is the first work to establish minimax learning rates under general source conditions without appealing to any direct assumptions on the eigendecay or statistical dimension of the Mercer operator. We demonstrate that the optimal rates in the mean-squared error can be completely characterized in terms of three geometric quantities: the ``spectral distances'' of the hypothesis class $\mathcal{H}_K$ to $L^{\infty}(\mathcal{X})$ and the learning target $f^{*}$, and the ``effective dimension'' \cite{villani2009optimal} of the metric measure space $(\mathcal{X}, P|_{\mathcal{X}}, |||\cdot|||_{d})$, where the latter coincides with measure dimension considered by \cite{lott2009ricci, villani2009optimal, ambrosio2018calculus} in $\text{CD}(0, N)$ spaces.   \\

\section{Preliminaries} \label{Regression Example}

We recall our learning problem:

\begin{equation*}
    f_{\lambda, D} = \text{arg}\min_{f \in \mathcal{H}_K} \frac{1}{n}\sum_{i = 1}^n (Y_i - f(X_i))^2 + \lambda||f||^2_{\mathcal{K}}
\end{equation*}

where $\mathcal{H}_K$ be a separable reproducing kernel Hilbert space (RKHS) on $\mathcal{X}$ (see e.g. \cite{steinwart2008support} for a definition). A solution to this problem can be computed in closed form:
\begin{equation*}
    f_{\lambda, D} = (C_D + \lambda)^{-1}g_D
\end{equation*}
where $C_D = \mathbb{E}_{\mathcal{D}}[k(X, \cdot) \otimes k(X, \cdot)]$ is the empirical covariance, and $g_D = \sum_{i = 1}^n y_i k(x_i, \cdot)$. Before discussing our model framework and results, we begin with an overview of some mathematical preliminaries required for this section. 

\subsection{Hilbert Scales} \label{Hilbert Prelims}
Suppose the imbedding $I_{\nu}: \mathcal{H}_K \to L^2(\nu)$ of $\mathcal{H}_{K}$ into $\mathcal{L}^2(\nu)$ is injective (here $\nu = P_{X}$ is the marginal on $\mathcal{X}$). Let $S_{\nu} = I^{*}_{\nu}$ be its adjoint. Then, it can be shown that $S_{\nu}$ is an integral operator given by:
\begin{equation}
\label{eq: Adjoint Formula}
    S_{\nu}f(x) = \int_{\mathcal{X}} K(x, \cdot)f(y)d\nu(y)
\end{equation}
Using $S_{\nu}$ and $I_{\nu}$, we construct the following positive self-adjoint operators on $\mathcal{H}_K$ and $\mathcal{L}^2(\nu)$, respectively:
\begin{align*}
    C_{\nu} & = S_{\nu}I_{\nu} =  I^{*}_{\nu}I_{\nu} \\
    T_{\nu} & = I_{\nu}S_{\nu} =  I_{\nu} I^{*}_{\nu}
\end{align*}
We observe that $C_{\nu}$ and $T_{\nu}$ are nuclear (see Lemma 2.2/2.3 in \cite{steinwart2012mercer}) 
Since, $T_{\nu}$ is nuclear and self-adjoint, it admits a spectral representation:
\begin{equation*}
    T_{\nu} = \sum_{j = 1}^{\infty} \mu_j e_j \langle e_j, \cdot \rangle_{L^2(\nu)}
\end{equation*}
where $\{\mu_j\}_{j = 1}^{\infty} \in (0, \infty)$ are nonzero eigenvalues of $T_{\nu}$ (ordered nonincreasingly) and $\{e_j\}_{j = 1}^{\infty} \subset L^2(\nu)$ form an orthonormal system of corresponding eigenfunctions. Note that formally, the elements $e_j$ of $L^2(\nu)$ are equivalence classes $[e_j]_{\nu}$ whose members only differ on a set of $\nu$-measure zero--- notationally, we consider this formalism to be understood here and simply write $e_j$ to refer to elements in both $\mathcal{H}_K$,  $L^2(\nu)$, and their interpolation spaces (with the residence of $e_j$ understood from context). Given a nonincreasing $\textit{index}$ function $\phi: (0, ||T_{\nu}||) \to \mathbb{R}_{+}$, we define the \textit{Hilbert} scales $\mathcal{H}^{\phi}_K$ as \cite{mathe2003geometry}:

\begin{definition}
Let $\phi: (0, ||T_{\nu}||) \to \mathbb{R}_{+}$ be nondecreasing, continuous, with $\phi(0) = 0$. Then, the Hilbert scale $\mathcal{H}^{\phi}_{K}$ is the completion of the space:
\begin{equation*}
    \mathcal{H}^{\phi}_{K} = \Big\{f \in L^2(\nu): \sum_{i = 1}^{\infty} \frac{\langle f, e_i \rangle_{L^2(\nu)}^2}{\phi(\mu_i)} < \infty \Big\}
\end{equation*}
with respect to the inner product $\langle f, g \rangle_{\phi} = \sum_{i} \frac{\langle f, e_i \rangle \langle g, e_i \rangle}{\phi(\mu_i)}$. 
\end{definition}
It is easy to see that $\mathcal{H}^{\phi}_K \cong \text{ran}(\phi^{\frac{1}{2}}(T_{\nu}))$. The subscript $K$ in $\mathcal{H}^{\phi}_K$ reflects that $\phi$ is acting on the spectrum of $K$, when the kernel is fixed and understood from context, we will omit this subscript and simply denote the Hilbert scale $\mathcal{H}^{\phi}$. 

We define:
\begin{equation*}
    \|k^{\phi}\|^2_{\infty} = \sup_{x \in \mathcal{X}} \sum_{i=1}^{\infty} \phi(\mu_i)e_i^2(x)
\end{equation*}
where we allow $\|k^{\phi}\|_{\infty} = \infty$. If $\|k^{\phi}\|_{\infty} < \infty$, then it is easy to show that $\mathcal{H}^{\phi}_K$ is continuously embedded in $L^{\infty}(\mathcal{X})$ with norm $\|k^{\phi}\|_{\infty}$ (see e.g. Theorem 9 in \cite{fischer2020sobolev} for the case of Holder source conditions). \\ 

The Hilbert scale $\mathcal{H}^{\phi}_{K}$ generalizes the notion of RKHS interpolation (power) spaces discussed in \cite{steinwart2012mercer}, which result from the specific choice of $\phi(t) = t^{\alpha}$ for some $\alpha \in (0, 1)$. Intuitively, the Hilbert scales can be viewed as a nonlinear transformation of the infinite-dimensional RKHS ellipsoid $B(\mathcal{H}_K)$, realized by transforming the axes lengths with the index function $\phi$. As we will see later, the growth properties of $\phi$ play a fundamental role in specifying the smoothness of the elements of $\mathcal{H}^{\phi}_K$. \\ 

We now define \textit{weak embeddings} of Hilbert scales, a notion which will feature prominently in the remainder of the paper:

\begin{definition}
\label{Weak Embedding Def}
For a given Banach space $\mathcal{B}$, We say that $\mathcal{H}^{\phi}_{K}$ is weakly embedded in $\mathcal{B}$ (denoted as $\mathcal{H}^{\phi}_{K} \stackrel{w}{\hookrightarrow} \mathcal{B}$) if $\mathcal{H}^{\phi^{1 + \epsilon}}_{K} \hookrightarrow \mathcal{B}$ compactly for all $\epsilon >  0$.
\end{definition}

Note that in the above definition it is entirely possible that $\mathcal{H}^{\phi}_{K} \stackrel{w}{\hookrightarrow} \mathcal{B}$ and $\mathcal{H}^{\phi}_K \not \hookrightarrow \mathcal{B}$. In fact, in most scenarios of interest, this is indeed the case, where $\mathcal{H}^{\phi}_{K}$ is merely the \textit{limit} of a sequence of Hilbert scales $\mathcal{H}^{\phi^{1 + \epsilon}}_{K}$ contained compactly in $\mathcal{B}$. A classical (and guiding) example of the latter phenomenon involves the canonical Sobolev spaces over a sufficiently regular domain $\mathcal{X} \subset \mathbb{R}^d$, where, $s > \frac{d}{2}$, $\mathcal{H}_K = H^s(\mathcal{X})$, $\psi(t) = t^{\frac{d}{2s}}$, and $\mathcal{B} = L^{\infty}(\mathcal{X})$ in Definition \ref{Weak Embedding Def}. Then $H^{\frac{d}{2}}(\mathcal{X}) \stackrel{w}{\hookrightarrow} L^{\infty}(\mathcal{X})$ by the Sobolev embedding theorem (see e.g. Corollary 5.6 in \cite{steinwart2019convergence} or Theorem 4.12 in \cite{adams2003sobolev} for a more detailed discussion).

\subsection{Approximation Widths}
We review some standard widths in approximation theory that describe the compactness of spaces in various norms. The $n^{\text{th}}$ entropy number $\epsilon_n(\mathcal{X})$ of a compact subset $\mathcal{X}$ of a metric space, is informally the smallest radius $\epsilon$ such that there exists an $\epsilon$-covering of $\mathcal{X}$ of at most $n$ metric balls. Precisely, we have:
\begin{equation}
\label{eq: Entropy Def}
\epsilon_n(\mathcal{X}) = \inf\{\epsilon > 0: \exists \hspace{0.5mm} \{x_i\}_{i = 1}^n \subset \mathcal{X}; \text{s.t.} \hspace{2mm} \mathcal{X} \subset \cup_{i = 1}^n B(x_i, \epsilon)\}
\end{equation}
Let $X, Y$ be two Banach spaces with $X \subset Y$. We further define the $n$-Gelfand width $X$ embedded in $Y$ as:
\begin{equation}
\label{eq: Gelfand Def}
c_n(X, Y) = \inf_{Z \subset X, \text{codim}(Z) \leq n} \hspace{2mm} \sup_{z \in Z} \frac{||z||_{Y}}{||z||_{X}}
\end{equation}
where the infimum in \eqref{eq: Gelfand Def} is taken over all subspaces $Z \subset X$ of codimension at most $n$. The Gelfand widths play a pivotal role in the study of compressed sensing, where they have been demonstrated to characterize the worst-case errors of optimal measurement/reconstruction schemes \cite{foucart2013invitation}. We further define the $n$-Kolmogorov widths:
\begin{equation}
\label{eq: Kolmogorov Def}
d_n(X, Y) = \inf_{A: X \to Y; \text{rank}(A) \leq n} \hspace{2mm} \sup_{z \in \mathcal{X}} \frac{||z - Az||_{Y}}{||z||_{X}}
\end{equation}
where the infimum is taken over all (possibly nonlinear) operators $A$ with rank at most $n$. Intuitively, the Kolmogorov widths describe the distance of $X$ to the closest $n$-dimensional subspace in $Y$. The Kolmogorov widths are dominated by the Gelfand widths, and hence, despite their more natural definition, are often suboptimal for our purposes. Finally, we define the $n$-Bernstein widths. 
\begin{equation}
\label{eq: Bernstein Def}
b_n(X, Y) = \sup_{Z \subset X, \text{dim}(Z) \geq n + 1} \hspace{2mm} \inf_{z \in Z} \frac{||z||_{Y}}{||z||_{X}}
\end{equation}
Intuitively, the Bernstein width gives the radius of the largest $||\cdot||_{Y}$-ball that may be inscribed in a unit $||\cdot||_{X}$-ball in subspaces of dimension greater than $n$. These are the smallest of the three widths, and are typically employed to obtain lower estimates for the singular numbers. When $X$ and $Y$ are both Hilbert spaces, then $b_n$ coincides with the $(n+1)^{\text{st}}$ singular number of the embedding $X \hookrightarrow Y$. Moreover, if $X \hookrightarrow Y$ is compact, then $d^n(X, Y), d_n(X, Y), b_n(X, Y) \to 0$. Intuitively, the rate of the latter convergence quantifies the  ``degree'' of compactness of $X \hookrightarrow Y$ (a significant effort in the paper is devoted to obtaining tight estimates for this rate when $X$ is our hypothesis class). For a detailed discussion of the relationships between various approximation widths and function space embeddings, we direct the reader to \cite{carl_stephani_1990} and \cite{pinkus2012n}. \\ 

\subsection{Potential Theory}
\label{Potential Preliminaries}

We will employ some basic constructions of potential theory, primarily to provide a geometric perspective into the analytic conditions outlined in section \ref{Ridge Assumptions}. In the interest of space, we mostly focus here on offering intuition behind these constructs, and forego a fully self-contained development. An excellent, comprehensive introduction to potential theory and its relationship with Markov processes can be found in \cite{fukushima2010dirichlet}. Throughout this section, we make the standard assumption that when considering a generic measure space $(\mathcal{Z}, \mu)$, $\mathcal{Z}$ is a locally compact separable metric space and $\mu$ has full support $\bar{\mathcal{Z}}$. Note, that we will sometimes refer to the couple $(\mathcal{Z}, \mu)$ as a metric measure space, where the metric is understood to be Euclidean $|||\cdot|||_{d}$ in the ambient topological dimension $d \geq 1$.




\begin{definition}
\label{Dirichlet Form definition}
Let $\mathcal{H} \subset L^2(\mathcal{Z}, \mu)$ be a Hilbert space. A closed symmetric form $\mathcal{E}$ with domain $\mathcal{H}$ is called a \textit{Dirichlet form} on $L^2(\mathcal{Z}, \mu)$ if there exists a nonnegative-definite, self-adjoint operator $A$ on $L^2(\mathcal{Z}, \mu)$ such that:
\begin{equation*}
    \mathcal{E}(u, v)  = \int_{\mathcal{Z}} (Au)vd\mu
\end{equation*}
and the associated semigroup $T_t = e^{-At}$ is Markovian. Moreover, $\mathcal{H}$ is complete with respect to the inner product:
\begin{equation*}
    \mathcal{E}_1(u, v) = \mathcal{E}(u, v) + (u, v)_{L^2(\mu)}
\end{equation*}
\end{definition}

We note that the identification of the symmetric form $\mathcal{E}$ with a  nonnegative-definite, self-adjoint operator is automatic from the $\mathcal{E}_1$-closedness of $\mathcal{H}$ in $L^2(\mathcal{Z}, \mu)$ (Theorem 1.3.1 in \cite{fukushima2010dirichlet}); hence the only restriction in Definition \ref{Dirichlet Form definition} is the Markovian nature of the associated semigroup. In this paper, typically $\mathcal{H} = \mathcal{H}_K(\mathcal{X})$,  $\mathcal{Z} = \mathcal{X}$ and $\mu = dx$ is the Lebesgue measure (see Remark \ref{Trace Clarification}). We identify $\mathcal{E}_1$ with $\langle \cdot, \cdot \rangle_{K}$ up to norm-equivalence. Observe that if instead, $\mathcal{H} = \mathcal{H}_K(\mathcal{X})$, $\mathcal{Z} = \mathcal{X}$ and $\mu = \nu$ then $A$ is simply the inverse of the Mercer operator $T_{\nu}$. The Markovian property of a Dirichlet form could equivalently be defined via closure of $\mathcal{H}$ under normal contractions (Theorem 1.4.1 in \cite{fukushima2010dirichlet}), however we eschew this formulation in order to highlight the association of the inner product with the Hunt process $\{X_t\}_{t \geq 0}$ generated by $T_t$. When $K(x, y) = \kappa(x - y)$ is radial, $T_t$ is a convolution semigroup. In section \ref{Non-Radial}, we examine how the probabilistic properties of $X_t$ can characterize the geometry of $\mathcal{H}_K$. 

\begin{remark}
\label{Trace Clarification}
We note that when $\mathcal{X} \subset \mathbb{R}^d$ is a proper compact subdomain, $\mathcal{H}_K(\mathcal{X})$ is slightly different from our RKHS $\mathcal{H}_K$. Recall that the domains of elements of $\mathcal{H}_K$ are not restricted to $\mathcal{X}$, however by the assumed injectivity of $I_{\nu}: \mathcal{H}_K \to L^2(\nu)$, we have that for each $f \in I_{\nu}(\mathcal{H}_K)$, there exists a unique $\tilde{f} \in \mathcal{H}_K$ such that $f = \tilde{f}|_{\mathcal{X}}$ almost surely on $\mathcal{X}$ (with respect to either $dx$ or $\nu$, due to the absolute continuity of the latter). On the other hand, the domains of elements of $\mathcal{H}_K(\mathcal{X})$ \textit{are restricted} to $\mathcal{X}$ by definition; hence $\mathcal{H}_K(\mathcal{X}) = \{f  = \tilde{f}|_{\mathcal{X}}; \tilde{f} \in \mathcal{H}_K\}$ and $\langle f, g \rangle_{\mathcal{H}_K(\mathcal{X})} = \langle \tilde{f}, \tilde{g} \rangle_{K}$ (where $\tilde{f}, \tilde{g} \in \mathcal{H}_K$ and $f = \tilde{f}|_{\mathcal{X}}$ and $g = \tilde{g}|_{\mathcal{X}}$). Moreover, $\mathcal{H}_K$ is itself typically obtained from $\mathcal{H}_K(\mathbb{R}^d)$ via projection. Namely, $\mathcal{H}_K$ is the orthogonal complement in $\mathcal{H}_K(\mathbb{R}^d)$ of the subspace of $\mathcal{H}_K(\mathbb{R}^d)$ containing functions supported on $\mathbb{R}^d \setminus \bar{\mathcal{X}}$ (recall we always assume $\bar{\mathcal{X}}$ is the support of $\nu$). Here, the Hunt process associated with $\mathcal{H}_{K}(\mathcal{X})$ is simply that associated with $\mathcal{H}_{K}(\mathbb{R}^d)$ with killing at the boundary $\partial \mathcal{X}$. Hence, in this case we abuse notation and simply express the Dirichlet form as $(\langle \cdot, \cdot \rangle_{K}, \mathcal{H}_K(\mathcal{X}))$ instead of the more cumbersome $(\langle \cdot, \cdot \rangle_{\mathcal{H}_K(\mathcal{X})}, \mathcal{H}_K(\mathcal{X}))$ with the aforementioned convention understood. See Appendix \ref{Fourier Capacity and Range Space} and section 6.2 of \cite{fukushima2010dirichlet} for more details on the trace of Dirichlet forms. 
\end{remark}

\begin{definition}
\label{Regular Dirichlet Form}
A Dirichlet form $(\langle \cdot, \cdot \rangle_{\mathcal{H}}, \mathcal{H})$ on $L^2(\mathcal{Z}, \mu)$ is called \textit{regular} if $C_0(\mathcal{Z}) \cap \mathcal{H}$ is dense in both $C_0(\mathcal{Z})$ and $\mathcal{H}$ in the respective norms. Further, $\langle \cdot, \cdot \rangle_{\mathcal{H}}$ is \textit{local} if $\langle u, v \rangle_{\mathcal{H}} = 0$ for $u, v \in \mathcal{H}$ with disjoint compact supports. 
\end{definition}

We note that when $\mathcal{Y}$ is itself compact, the regularity condition above is not particularly restrictive, as $C_{0}(\mathcal{Y})$ simply coincides with $C(\mathcal{Y})$, the space of continuous functions on $\mathcal{Y}$. Hence, when $\mathcal{H}_K$ contains continuous functions, regularity of the kernel inner product $\langle \cdot, \cdot \rangle_{K}$ is equivalent to the universality of the kernel $K(\cdot, \cdot)$, a classical assumption in the application of kernel methods \cite{micchelli2006universal}. See Chapter 1 of \cite{fukushima2010dirichlet} for further details on Dirichlet forms, and \cite{micchelli2006universal, steinwart2008support}for more on universal kernels. 


\begin{definition}
\label{Capacity Definition}
Suppose $(\langle \cdot, \cdot \rangle_{\mathcal{H}}, \mathcal{H})$ is a local, regular Dirichlet form on $L^2(\mathcal{X})$. Let $E \subset F \subset \mathcal{X}$ be two precompact open sets. The relative $\mathcal{H}$-capacity of $E$ in $F$ is:
\begin{equation*}
    \text{cap}_{\mathcal{X}}(E, F; \mathcal{H}) = \inf\{||f||^2_{\mathcal{H}}: f \in \mathcal{H}; f \geq 1 \hspace{1mm} \text{on} \hspace{1mm} E; f \equiv 0 \hspace{1mm} \text{on} \hspace{1mm} \mathcal{X} \setminus F\}
\end{equation*}
with $\text{cap}_{\mathcal{X}}(E, F; \mathcal{H}) = \infty$ if no such admissible $f \in \mathcal{H}$ exists. 
\end{definition}
It is easy to verify that for a fixed open subset $F \subset \mathcal{X}$, $\text{cap}(E, F; \mathcal{H})$ is a \textit{Choquet} capacity in $E$; namely it is an increasing set function that is continuous with respect to countable unions and closures. Moreover, if $E$ is compact, $F$ is a relatively compact open set, and $\langle \cdot, \cdot \rangle$ is regular, we can ensure the existence of an admissible $f$ and hence the finiteness of the capacity. Note that here, following convention, we simply require $f \geq 1$ on $E$ in Definition \ref{Capacity Definition}, however, for practical purposes, we can restrict our consideration to $f \equiv 1$ on $E$ and $0 \leq f \leq 1$ on $\mathcal{X}$ by the assumed Markovian property of the kernel inner product. Moreover, note that when $\mathcal{H} = \mathcal{H}_K(\mathcal{X})$, the norm $||\cdot||_{\mathcal{H}_K(\mathcal{X})}$ in Definition \ref{Capacity Definition} can be replaced by $||\cdot||_{K}$ since the admissible cutoff functions $\phi$ have compact support in $\mathcal{X}$, vanish at the boundary, and hence can be identified with their zero-extension $\tilde{\phi}$ to $\mathbb{R}^d$ (note that if $\phi \in C_{c}(\text{int}(\mathcal{X}))$, then $\langle \tilde{\phi}, f \rangle_{K} = 0$ for $f \in \mathcal{H}_K(\mathbb{R}^d)$ with $f = 0$ a.s. on $\mathcal{X}$ by locality, hence $\phi \in \mathcal{H}_K$ by Remark \ref{Trace Clarification}). 

Geometrically, $\text{cap}(E, F; \mathcal{H})$ characterizes a generalized perimeter of $E$ relative to $F$. A possibly more intuitive interpretation of $\text{cap}(E, F; \mathcal{H})$ stems from physics, where the relative capacity can be viewed as the generalized conductance across the field $F \setminus E$ given electrodes placed at $\partial E$ and $\partial F$ \cite{polya1951isoperimetric}. Analogously, we consider the reciprocal $\frac{1}{\text{cap}(E, F; \mathcal{H})}$ as the $\textit{resistance}$ across the field $F \setminus E$. In the subsequent section, we will be particularly concerned with estimating $\text{cap}(B(x, Kr), B(x, r); \mathcal{H}_K(\mathcal{X}))$ for $x \in \mathcal{X}$ and $K \in (0,1)$ as $r \to 0$, i.e the conductance across a contracting annulus. In the classical case (where $\mathcal{H}_K(\mathcal{X})$ is replaced by $W^{1, p}(\Omega)$ for $p \geq 1$ and $\Omega \subset \mathbb{R}^d$ with smooth boundary), the asymptotics of $\text{cap}(B(x, Kr), B(x, r); W^{1, p}(\Omega))$ play a crucial role in the behavior of generalized Green's functions near their singularity \cite{holopainen2002singular}. It is important to note that the $\mathcal{H}_K(\mathcal{X})$-capacity is completely \textbf{independent of the measure} $\nu$, and purely a characteristic of the RKHS $\mathcal{H}_K$ and the domain $\mathcal{X}$. In Lemma \ref{Isoperimetric Equivalance}, we demonstrate that it is precisely the \textit{interplay} of the $\mathcal{H}_K(\mathcal{X})$-resistance and $\nu$-volume growth of balls that determines optimal Sobolev-type embeddings and minimax learning rates. 

\begin{remark}[Notation]
\label{Notation Remark}
For any two Banach spaces $\mathcal{A}$ and $\mathcal{B}$, we write $\mathcal{A} \hookrightarrow \mathcal{B}$, if $\mathcal{A}$ is continuously embedded in $\mathcal{B}$, and $\mathcal{A} \stackrel{c}\hookrightarrow \mathcal{B}$ if this embedding is further compact. We denote by $B(\mathcal{A})$ the unit ball in $\mathcal{A}$ and $\mathbb{D}$ denotes the complex open unit disk. For $x \in \mathcal{X}$, $B(x, r)$ denotes the open unit ball of radius $r > 0$ in the the metric space $(\mathcal{X}, d)$. $\mathbb{E}_{\mathcal{D}}[\cdot]$ denotes the sample expectation, and $C_{D} = \mathbb{E}_{\mathcal{D}}[k(X, \cdot) \otimes k(X, \cdot)]$ denotes the sample covariance. We will denote by $f_{D, \lambda}$ the solution to \eqref{eq: Ridge Regression} and $f_{\lambda} = (C_{\nu} + \lambda)^{-1}S_{\nu}f^{*}$, the regularized population solution. $\mathcal{F}$ denotes the Fourier transform on $\mathbb{R}^d$, while $\mathcal{F}_d$ denotes the $d$-dimensional Fourier transform of a univariate positive definite function (see e.g. Theorem 5.26 in \cite{wendland2004scattered}). $L^p(\mathcal{Z}, \mu)$ denotes the $L^p$ space on the measure space $(\mathcal{Z}, \mu)$ --- the domain $\mathcal{Z}$ will be omitted in the notation if it is understood from context (i.e. when $\mathcal{Z} = \mathcal{X}$); likewise the measure $\mu$ will be omitted if it is equivalent to Lebesgue measure $dx$. $||\cdot||_{2}$ or $||\cdot||_{L^2(\nu)}$ are used for norms in $L^2(\nu)$ (the latter if we want to make the space explicit); $||\cdot||_{\phi}$ for the norm in Hilbert scale $\mathcal{H}^\phi$, and $||\cdot||_{K}$ for the RKHS $\mathcal{H}_K$. $|||\cdot|||_{d}$ will be used to denote the Euclidean norm in $\mathbb{R}^d$. Typically, we will take $\nu = P_X = P|_{\mathcal{X}}$ where $P$ is the data-generating measure, and $|_{\mathcal{Z}}$ denotes the ``marginal on'' the space $\mathcal{Z}$. When $\mathcal{Z} \subset \mathbb{R}^d$, unless stated otherwise, \textit{we will always assume $\nu$ is Lebesgue absolutely continuous} and Radon. Finally, we will write $a(x) \preceq b(x)$ ($a(x) \succeq b(x)$) if $a(x) \leq Kb(x)$ (resp. $b(x) \leq Ka(x)$) asymptotically (typically as $x \to 0$ or $x \to \infty$), for some $K > 0$ that depends only on problem parameters (and independent of $x$ or the confidence level in the statement). If $a \preceq b$ and $b \preceq a$, then we write $a \asymp b$. Finally for a montonic, positive, univariate function $f(t)$, $f^{-1}(t)$ denotes its (generalized) inverse, while $f(t)^{-1} = \frac{1}{f(t)}$ denotes its reciprocal. 
\end{remark}

We say that a function $\xi$ satisfies $\Delta_2$ condition if there exists constants $D_1, D_2 > 0$, such that:
\begin{equation}
\label{eq: Delta_2}
D_1 \xi(\lambda) \leq \xi(2\lambda) \leq D_2\xi(\lambda) \hspace{2mm} \forall \lambda > 0
\end{equation}

Further, for any positive monotonic function $\phi:(0, \infty) \to (0, \infty)$, we define the dilation function $d_{\phi}: (0, \infty) \to (0, \infty)$ as:
\begin{equation}
    d_{\phi}(t) = \sup_{s \in (1, \infty)} \frac{\phi(st)}{\phi(s)}
\end{equation}
We define the extension indices:
\begin{align}
    \alpha_{\phi} & \equiv \lim_{t \to 0} \frac{\log d_{\phi}(t)}{\log t}  \label{eq: Lower Extension Index}\\
    \beta_{\phi} & \equiv \lim_{t \to \infty} \frac{\log d_{\phi}(t)}{\log t} \label{eq: Upper Extension Index}
\end{align}

We say a measure $\mu$ on $\mathcal{X}$ is doubling if for each $x \in \mathcal{X}$, there exists a $r_x > 0$, such that  $\mu(B(x, \cdot))$ is $\Delta_2$ on $(0, r_x)$ with constants $D_{1, x}$ and $D_{2, x}$. 
\subsection{Assumptions} \label{Ridge Assumptions}
Let $\phi$ and $\psi$ be two admissible index functions (i.e. $\phi$ and $\psi$ are nondecreasing, nonnegative, and continuous, with $\phi(0) = \psi(0) = 0$)
\begin{assumption}
\label{Embedding Condition}
$H_{K} \stackrel{c}\hookrightarrow H^{\phi} \stackrel{c}\hookrightarrow H^{\psi} \stackrel{w}{\hookrightarrow} L^{\infty}(\mathcal{X})$
\end{assumption}
\begin{assumption}
\label{Growth Conditions}
\textbf{(a)} $\frac{t}{\phi(t)}$ and $\frac{\phi(t)}{\psi(t)}$ are nondecreasing, and $\frac{t}{\psi(t)}$ is concave; \textbf{(b)} $\phi, \psi$ satisfy the $\Delta_2$ condition \eqref{eq: Delta_2}  for some $D^{\phi}_1, D^{\phi}_2, D^{\psi}_1, D^{\psi}_2 > 1$
\end{assumption}
\begin{assumption}
\label{Source Condition}
$f^{*} \in \mathcal{H}^{\phi}$
\end{assumption}

\begin{assumption}
\label{Eigenfunction Growth}
$\sup\Big\{\Big\|\sum_{i = 1}^{n} c_i e_i(\cdot)\Big\|_{\infty}: \sum_{i = 1}^n c^2_i = 1 \Big\} \asymp \sqrt{s(n)}$ for some increasing functions $s$ that satisfies \eqref{eq: Delta_2} for some $D^{s}_1, D^{s}_2 > 1$.

\end{assumption}

\begin{assumption}
\label{Subexponential Noise}
There are constants $\sigma, L > 0$ such that:
\begin{equation*}
    \int_{\mathbb{R}} |y - f^{*}(x)|^m dP(y|x) \leq \frac{m!\sigma^2 L^{m-2}}{2}
\end{equation*}
for $\nu$-almost all $x \in \mathcal{X}$
\end{assumption}

\begin{assumption*}
\label{Domain Condition}
$\mathcal{X} \subset \mathbb{R}^d$ is connected, relatively compact where $\epsilon_{n}(\mathcal{\bar{X}}) \asymp n^{-\frac{1}{d}}$ as $n \to \infty$. 
\end{assumption*}

\begin{assumption*}
\label{Radial Kernel}
$K$ is radial, i.e. $K(x, y) = \kappa(||x - y||)$ for some strictly positive definite function $\kappa \in L^1(\mathbb{R})$, such that $\mathcal{F}_d \kappa(t)$ is nonincreasing, continuous and $\alpha_{\mathcal{F}_d\kappa}, \beta_{\mathcal{F}_d\kappa} \in (-\infty, \infty)$. 
\end{assumption*}

\begin{assumption*}
\label{Opt Smoothness}
$\psi\Big(\frac{t\mathcal{F}_d \kappa(t^{\frac{1}{d}})}{s(t)}\Big) \asymp \frac{1}{s(t)}$ as $t \to \infty$
\end{assumption*}

Assumptions \ref{Embedding Condition} and \ref{Growth Conditions} dictates the relative growth rates of the index functions $\phi$ and $\psi$. Indeed, the compactness of the embedding $H^{\phi} \stackrel{c}\hookrightarrow H^{\psi}$ ensures that $\frac{\phi(t)}{\psi(t)} \to 0$ as $t \to 0$. Moreover, the first embedding $H_{K} \stackrel{c}\hookrightarrow H^{\phi}$ ensures that both $\frac{t}{\phi(t)} \to 0$ and $\frac{t}{\psi(t)} \to 0$ as $t \to 0$. Intuitively, this condition implies that the norms in $\mathcal{H}_{\psi}$ and $\mathcal{H}_{\phi}$ are weaker than those in our hypothesis class $\mathcal{H}_K$, allowing us to study convergence in these larger spaces of sample estimators constructed in $\mathcal{H}_K$. The third embedding, $H^{\psi} \stackrel{w}{\hookrightarrow} L^{\infty}(\mathcal{X})$ occurs only in the weak sense (recall Definition  \ref{Weak Embedding Def} and examples therein), i.e. $H^{\psi}$ is the limit of Hilbert scales embedded compactly in $L^{\infty}(\mathcal{X})$. The second part of the assumption requiring $\frac{t}{\phi(t)}$ and $\frac{\phi(t)}{\psi(t)}$ to be nondecreasing extends this behavior as $t \to \infty$; collectively the two growth conditions characterize $\phi^{-1}$ and $\psi^{-1}$ as resembling Young's functions; a requirement commonly imposed in the study of Orlicz spaces \cite{arriagada2021matuszewska}. The requirement that $\frac{t}{\phi(t)}$ is additionally concave enables the application of a Gagliardo-Nirenberg type interpolation inequality \cite{mathe2008direct} that relates $L^{\infty}(\mathcal{X})$ norms to those in $\mathcal{H}_K$ and $L^2(\nu)$, which will be crucial in our analysis of uniform error rates. Finally, the condition that index functions are $\Delta_2$ is found quite commonly in the literature on statistical inverse problems \cite{mathe2003geometry, schuster2012regularization}, where it has been used to demonstrate adaptivity of balancing strategies to unknown source conditions.  It should be noted that all the growth conditions in Assumption \ref{Growth Conditions} are satisfied when $\phi(t) = t^{\beta}$ and $\psi(t) = t^{\alpha}$ are power functions with $0 < \alpha < \beta < 1$ (the so-called H{\"o}lder source conditions). \\

Assumption \ref{Source Condition} characterizes the smoothness of the learning target $f^{*}$. Note a distinctive feature of our analysis is that we allow $f^{*}$ to lie outside the hypothesis class $\mathcal{H}_K$ (in light of Assumption \ref{Embedding Condition}), while still in $L^{\infty}(\mathcal{X})$ (see Assumption \ref{Embedding Condition}). Assumption \ref{Subexponential Noise} is standard subexponential noise condition in kernel regression \cite{fischer2020sobolev, talwai2022sobolev}.  \\\


Assumption \ref{Eigenfunction Growth} characterizes the embedding of $L^{\infty}(\mathcal{X})$ in $L^2(\nu)$ via the eigenspaces of $T_{\nu}$. The ``Lebesgue'' function $M_{n}(x) = \sum_{i = 1}^{n} e_i(x)$ was first considered by Littlewood \cite{hardy1948new} for the Fourier basis; later its $L^{p}$-norms were studied in several works \cite{konyagin1981problem, bochkarev2006multiplicative, borwein2001expected}, primarily in the context of finding roots for various generalized polynomials. \cite{temlyakov2011greedy} and \cite{kashin2005orthogonal} later explored the intimate connection between such Lebesgue-type inequalities and the greedy approximation properties of the basis $\{e_i\}_{i = 1}^{\infty}$. Indeed, it can be seen directly from the definition \eqref{eq: Bernstein Def} that assumption \ref{Eigenfunction Growth} implies a lower bound of $s(n)^{-1}$ on the Bernstein widths $b^{2}_{n-1}(L^{\infty}(\mathcal{X}), L^2(\nu))$ of $L^{\infty}(\mathcal{X})$ in $L^2(\nu)$. Note, that although the orthonormality of $\{e_i\}_{i = 1}^{\infty}$ in $L^2(\nu)$ is $\nu$-dependent, this system is \textit{always} an orthonormal basis in $\mathcal{H}_K$ by separability and the injectivity of $I_{\nu}$ (see Theorem 3.3 in \cite{steinwart2012mercer}). Moreover, Assumption \ref{Eigenfunction Growth} is independent of the Mercer eigenvalues $\{\mu_i\}_{i = 1}^{\infty}$ and is hence applicable to any RKHS with a common eigenbasis (e.g. the popular Fourier basis in $L^2([-\pi, \pi]^d)$). 
Indeed, we will see in Lemma \ref{Isoperimetric Equivalance} that the growth rate in Assumption \ref{Eigenfunction Growth} can often be wholly characterized by the ``highest-dimensional'' small balls in the measure space $(\mathcal{X}, \nu)$, without requiring any explicit knowledge of the eigenfunctions $\{e_i\}_{i \in \mathbb{N}}$. In approximation theory, the square reciprocal of the left-hand side in Assumption \ref{Eigenfunction Growth} is known as the \textit{Christoffel function}, whose asymptotic growth (and its interaction with the measure $\nu$) is an active area of research \cite{mate1980bernstein, xu1996asymptotics, at983168rf141986orthogonal} with diverse applications to quadrature/interpolation and random matrices \cite{van2006asymptotics} and \cite{at983168rf141986orthogonal}. Moreover, it is important to note that Assumption 3 is significantly more general than the more typical assumption of uniformly bounded bases, which has been shown to be violated by several common kernels (see discussion in \cite{zhou2002covering, steinwart2012mercer}).  \\ 

As we will see in Lemma \ref{Optimal Smoothness Example}, the Christoffel function encodes highly relevant information regarding the support and singularities of the measure $\nu$, and has been central to the study of orthogonal polynomials \cite{simon2010szegHo}, Schrodinger operators \cite{jitomirskaya1999power}, and spectral theory \cite{widom1967polynomials} over the last half century. Recently, data-driven applications of the Christoffel function have been explored for outlier detection \cite{askari2018kernel}, leverage scoring \cite{pauwels2018relating}, and sampling \cite{fanuel2022nystrom}. These works have notably demonstrated that the population Christoffel function can be strongly approximated by its empirical counterpart, which can be constructed in linear sample complexity and enjoys uniform convergence guarantees \cite{lasserre2019empirical}. Moreover, the empirical Christoffel function may be evaluated globally on the domain, with its asymptotic properties providing key insights into structure and support of the measure $\nu$ (see Lemma \ref{Optimal Smoothness Example}). These features of the Christoffel function hence suggest that it may be a more natural object to study than the Mercer spectrum or operator-theoretic ``statistical'' (``effective'') dimension \cite{blanchard2018optimal}, which are notoriously intractable to data-driven estimation for non-(quasi)uniform measures, yet are almost always preconditioned in the kernel regression literature \cite{blanchard2018optimal, blanchard2020kernel, fischer2020sobolev}.  \\ 

While Assumptions \ref{Domain Condition}-\ref{Opt Smoothness} are not necessary for establishing the upper bound in Theorem \ref{Main Upper}, they are required to demonstrate its optimality. Namely, the radiality assumption on the kernel $K$ in Assumption \ref{Radial Kernel} enables the tight estimation of the approximation widths of $\mathcal{H}_K$ in $L^{\infty}(\mathcal{X})$ via the compactness properties of the domain $\mathcal{X}$, the latter of which are characterized by the entropy numbers specified in Assumption \ref{Domain Condition}. In section \ref{Non-Radial}, we discuss how we can relax this radiality condition, when our RKHS is the domain of a local, regular Dirichlet form --- in these settings it is sufficient that the associated Hunt process on $L^2(\mathcal{X}, dx)$ escapes ``uniformly fast'' from small balls (see Corollary \ref{Alternative Spectral Estimates with Escape Rate} in section \ref{Non-Radial} and \cite{grigor2014heat} for details). The dilation condition on $\mathcal{F}_d \kappa$ essentially imposes polynomial Fourier decay of the kernel $\kappa$ (a notable example of the latter being the popular Mat\'ern kernel). This dilation condition is primarily intended for streamlining the analysis, it could be removed at the expense of requiring Assumption \ref{Opt Smoothness} be satisfied instead by a dilation of the Fourier transform $\mathcal{F}_d \kappa$ (this tradeoff is highlighted in Remark \ref{Remark on Dilations} in the Appendix \ref{Lower Bound Proof}; note however the dilation condition is critical for the expository Lemmas \ref{Optimal Smoothness Example} and \ref{Isoperimetric Equivalance}). Similarly, the precise asymptotic behavior of the entropy numbers of $\mathcal{X}$ is only included for simplicity; the general case would require replacing $t\mathcal{F}_d\kappa(t^{\frac{1}{d}})$ with $\epsilon_t(\mathcal{X})^{-d}\mathcal{F}_d\kappa(\epsilon_t(\mathcal{X})^{-1})$ in Assumption \ref{Opt Smoothness}. In Lemmas \ref{Isoperimetric Equivalance} and \ref{Alternative Spectral Estimates} we see how entropic rate encodes the topological dimension of $\mathcal{X}$ (in contrast to the effective dimension of the weighted space $(\mathcal{X}, \nu)$). The requirement $\mathcal{X} \subset \mathbb{R}^d$ is also primarily made for simplicity; it suffices for $\mathcal{X} \subset \mathbb{A}$ where $\mathbb{A}$ is a $d$-dimensional locally compact Abelian group which, via Pontryagin duality, admits a Fourier transform. Indeed, we will briefly violate Assumption \ref{Domain Condition} in Lemmas \ref{Optimal Smoothness Example} and \ref{Mercer Differencing} when we allow $\mathcal{X} = \partial \mathbb{D}$. Assumption \ref{Opt Smoothness} ensures that the index function $\psi$ is indeed the ``optimal'' choice for characterizing the embedding of our hypothesis class $\mathcal{H}_K$ in $L^{\infty}(\mathcal{X})$. It is important to note that while Assumptions \ref{Embedding Condition} and \ref{Source Condition} implicitly depend on the ambient measure $\nu$ (as the index functions act spectrally on the Mercer operator $T_{\nu}$), the optimality condition is independent of this measure (which is typically unknown in practice and determined by the data-generating process) given the orthonormal basis $\{e_i\}_{i = 1}^{\infty}$ of $\mathcal{H}_K$ in Assumption \ref{Eigenfunction Growth} (or equivalently the effective dimension \cite{villani2009optimal} of the measure space $(\mathcal{X}, \nu)$ as in Lemma \ref{Isoperimetric Equivalance}). This contrasts with the exact decay rates of the eigenvalues $\mu_i(T_{\nu})$ \cite{fischer2020sobolev, blanchard2018optimal} traditionally required to establish minimax optimal learning rates. Indeed, as is shown in Appendix \ref{Lower Bound Proof}, the combination of Assumptions \ref{Embedding Condition}, \ref{Growth Conditions}, \ref{Eigenfunction Growth}, \ref{Domain Condition}, \ref{Radial Kernel}, and \ref{Opt Smoothness} enable us to infer that $\psi^{-1}(s(i)^{-1}) \preceq \mu_i \preceq \psi^{-1}(s(i)^{-\frac{1}{1 + \epsilon}})$ (for all $\epsilon > 0$) via a comparison of $c^{2}_n(\mathcal{H}_K, L^{\infty}(\mathcal{X}))$ (estimated by $n\mathcal{F}_d \kappa(n^{\frac{1}{d}}))$ and $b^2_n(L^{\infty}(\mathcal{X}), L^2(\nu))$ (estimated by $s(n)^{-1}$). As we will see, these assumptions collectively imply $ \frac{i\mathcal{F}_d \kappa(i^{\frac{1}{d}})}{s(i)} \preceq \mu_i \preceq (\psi^{-1} \circ \psi^{\frac{1}{1 + \epsilon}})\Big(\frac{i\mathcal{F}_d \kappa(i^{\frac{1}{d}})}{s(i)}\Big)$ for all $\epsilon > 0$, i.e. there is a \textit{gap} between the Fourier spectrum ($\mathcal{F}_d \kappa(i^{\frac{1}{d}})$) of $K(x, y)$ and the Mercer spectrum ($\mu_i$) in $L^2(\nu)$ quantified by the Christoffel function $s(i)^{-1}$. When such a gap occurs, Assumption \ref{Opt Smoothness} characterizes the ``tightness'' of the weak embedding $H^{\psi} \stackrel{w}{\hookrightarrow} L^{\infty}(\mathcal{X})$ in terms of the index function $\psi$. In the following result, we demonstrate that a gap can be produced when the density $d\nu$ decays rapidly near a zero. Intuitively, such measures possess ``higher-dimensional'' small balls in the vicinity of discrete ($\nu$-neglgible) sets that can still cause $T_{\nu}$ to betray the Fourier spectrum. 

\begin{lemma}
\label{Optimal Smoothness Example}
Let $\nu$ be the unique probability measure on the Riemannian circle $\partial \mathbb{D}$ with $d\nu \propto |1 - e^{i\theta}|^{2k}d\theta$ ($\theta \in [-\pi, \pi]$) and $k \in \mathbb{N}$. Suppose $\{e_i\}_{i \geq 1}$ is a basis of polynomials. Then,  $\mu_i \asymp \frac{i[\mathcal{F}\kappa](i)}{s(i)}$. Additionally, there exists a function $\psi$ satisfying Assumptions \ref{Embedding Condition}, \ref{Growth Conditions}, and \ref{Opt Smoothness}. 
\end{lemma}

Measures of this form play a pivotal role in probability theory, statistical physics, and econometrics \cite{bottcher2008asymptotic, samoradnitsky2017stable}. Most notably, in 
time-series analysis, $d\nu \propto |1 - e^{i\theta}|^{2k}d\theta$ is the spectral density of a moving average process with unit root at zero frequency. These processes form the canonical example of a noninvertible ARMA model, where the present state of the process cannot be expressed in terms of its past values. In such settings, $\nu$ may also arise from the \textit{differencing} operation, in which the characteristic polynomial of an ARMA process is multiplied by $(1 - z)^{\ell}$ to difference the data $\ell$ times and remove an unit roots in the autoregressive component that are causing nonstationarity. When a time series is ``overdifferenced'', this transformation may instead produce unit root(s) in the moving average component, leading to a spectral density such as $\nu$ above. Intuitively, differencing ``smooths'' the data and removes any non-constant trend. Analogously the Mercer operator $T_{\nu}$ (for $d\nu \propto |1 - e^{i\theta}|^{2k}d\theta$) can be perceived as smoothing (or differencing) its argument before convolving the result with the kernel $K(x, y) = \kappa(x - y)$. In the following lemma, we characterize Assumption \ref{Opt Smoothness} in terms of the range space of $T_{\nu}$:

\begin{lemma}
\label{Mercer Differencing}
Let $d\nu$ be as in Lemma \ref{Optimal Smoothness Example} and suppose $\psi(t) = t^{\beta}$ for some $\beta \in (0, 1)$, and $L^2(\partial \mathbb{D}) \subset L^2(\partial \mathbb{D}, \nu)$. Let $s = \frac{1}{2\beta} + \frac{k(1 - \beta)}{\beta}$. Then Assumption \ref{Opt Smoothness} is equivalent to $K_1 ||P_sf||_{L^2(\nu)} \leq ||T_{\nu}f||_{H^{s}(\partial\mathbb{D})} \leq K_2 ||P_sf||_{L^2(\nu)}$ for some $K_1, K_2 > 0$ (where $H^m(\partial\mathbb{D})$ denotes the Sobolev space of order $m > \frac{1}{2}$ on the unit circle $\partial\mathbb{D}$ and $P_m: L^2(\partial\mathbb{D}, \nu) \to H^m(\partial\mathbb{D})$ is the orthogonal projection in $L^2(\partial\mathbb{D}, \nu)$). 
\end{lemma}

Indeed, we observe that a larger value of $k$ results in a smoother range space for $T_{\nu}$, while the scale $\beta \in (0, 1)$ exhibits an inverse effect. Recall, that for power scales $\psi(t) = t^{\beta}$, $\beta$ characterizes the distance between $\mathcal{H}_K$ and $L^{\infty}(\mathcal{X})$, with a smaller $\beta$ reflecting a more compact embedding $\mathcal{H}_K \hookrightarrow L^{\infty}(\mathcal{X})$ and hence a smoother RKHS $\mathcal{H}_K$. In Lemma \ref{Optimal Target Space}, we consider a similar interpretation of Assumption \ref{Opt Smoothness} for nonperiodic domains, and demonstrate the latter condition characterizes ``optimal'' range space for the Mercer operator (the general case requires some additional technicalities and hence is deferred to to Appendix \ref{Fourier Capacity and Range Space}) 

\begin{remark}
Note that in Lemmas \ref{Optimal Smoothness Example} and \ref{Mercer Differencing} we briefly violate Assumption \ref{Domain Condition} as the Riemannian circle $\partial \mathbb{D} \not \subset \mathbb{R}$; however we see that this does not cause any complications as $\partial \mathbb{D}$ is a locally compact Abelian group (and hence admits a Fourier transform by Pontryagin duality) and one-dimensional (so we do not have to worry about radiality). 
\end{remark}

In Lemma \ref{Optimal Smoothness Example}, it is notable that the ``corruption'' to the Fourier spectrum by a factor of $\frac{i}{s(i)}$ in the Mercer spectrum is produced only by the volume growth of $\nu$ at the $\theta = 0$ ($z =1$). Indeed, as observed in the proof in Appendix \ref{Example Proofs} and elaborated in \cite{mastroianni2000weighted}, for doubling measures, such as the $\nu$ in Lemmas \ref{Optimal Smoothness Example} and \ref{Mercer Differencing}, the Christoffel function at a given point is precisely characterized by the local volume growth of $\nu$. Observe that while for any $\theta \in [-\pi, \pi] \setminus \{0\}$, $\nu([\theta-r, \theta + r]) \asymp C(\theta) r$, at $\theta = 0$, we have $\nu([-r, r]) \asymp r^{2k + 1}$ (here the constant $C(\theta)$ depends on $\theta$ with $C(\theta) \to 0$ as $\theta \to 0$). Hence, at $\theta = 0$, the one-dimensional measure $d\nu$ exhibits high-dimensional volume growth. This behavior can be attributed to $d\nu$ arising from the projection of a uniform measure in $\mathbb{S}^{2k + 1}$. Indeed, it can be readily shown that if $z \sim d\nu$, then $\text{Re}(z) \sim 2|X \cdot e_{2k + 2}|^2 - 1$, where $X$ is a uniformly distributed random vector  on the unit sphere $\mathbb{S}^{2k + 1}$ and $e_{2k + 2} \in \mathbb{S}^{2k + 1}$ is the standard basis vector with $1$ in its last entry. 

While for general, high-dimensional measures, tight estimates for the Mercer spectrum cannot typically be explicitly computed as in Lemma \ref{Optimal Smoothness Example} without additional assumptions, the spectral function $s(n, x) \equiv \sum_{i = 1}^{n} e^2_i(x)$ still plays a useful role as a gauge function \cite{kigami2004local} in on-diagonal bounds for the heat kernel on $(\mathcal{X}, \nu)$ \cite{filbir2010quadrature}. In the following result, we consider metric measure spaces $(\mathcal{X}, \nu, |||\cdot |||_d)$ that satisfy an $\text{RCD}^{*}(0, N)$ curvature-dimension condition, which roughly requires them to possess a Hilbertian Sobolev space $\mathcal{H}^1(\mathcal{X}, \nu)$, nonnegative \textit{weighted} Ricci curvature, and dimension bounded above by $N$ (see \cite{ambrosio2018short} for a formal definition). Here $s(n, x)$ does indeed coincide with the local volume growth of $\nu$ at $x \in \mathcal{X}$ (when $\nu$ is doubling), and on-diagonal heat kernel bounds may be applied to derive isoperimetric inequalities for the Dirichlet capacity of the RKHS. In Lemma \ref{Isoperimetric Equivalance}, we demonstrate that the weak embedding in Assumption \ref{Embedding Condition} implies an isocapacitary inequality for the $\mathcal{H}_K$-capacity and Assumption \ref{Opt Smoothness} ensures this inequality is tight.

\begin{lemma}
\label{Isoperimetric Equivalance}
Suppose $\mathcal{X} \subset \mathbb{R}^d$. Let $(\mathcal{X}, \nu, |||\cdot|||_{d})$ be an $\text{RCD}^{*}(0, N)$ metric measure space without boundary and $\nu(dx) = w(x)dx$. Suppose:
\begin{equation}
\label{eq: Average Volume Growth}
    \lim_{r \to 0} \int_{\mathcal{X}} \frac{r^d}{\nu(B(x, r))}d\nu(x) < \infty
\end{equation}
and $\nu$ is doubling, with $\{e_i\}_{i \geq 1}$ the  eigenfunctions of the Neumann Laplacian $\Delta_{\nu} \equiv \Delta + \frac{\nabla w}{w} \cdot \nabla$ on $L^2(\mathcal{X}, \nu)$. Suppose $\langle \cdot, \cdot \rangle_{K}$ is a local, regular Dirichlet form on $L^2(\mathcal{X}, dx)$ with domain $\mathcal{H}_K(\mathcal{X})$, with $\alpha_{\frac{1}{\mathcal{F}_d \kappa}} < \infty$ and $\beta_{\mathcal{F}_d \kappa} = -\beta_{\frac{1}{\mathcal{F}_d \kappa}}$. Then, Assumption \ref{Embedding Condition} implies:
\begin{equation}
\label{eq: Weak Embedding Implication}
\psi^{1+\epsilon}\Big(\frac{t\mathcal{F}_d \kappa(t^{\frac{1}{d}})}{s(t)}\Big) \preceq \frac{1}{s(t)} \hspace{2mm} \text{as} \hspace{1mm} t \to \infty
\end{equation}
for all $\epsilon > 0$. Moreover, Assumption \ref{Opt Smoothness} is equivalent to the isocapacitary inequality:
\begin{equation}
\label{eq: Isocapacitary Equivalence}
\frac{1}{\text{cap}_{\mathcal{X}}\Big(B\Big(x, \frac{r}{2}\Big), B(x, r); \mathcal{H}_K\Big)} \asymp \inf_{y \in \mathcal{X}} \frac{\psi^{-1}(\nu(B(y, r))}{\nu(B(y, r))} \hspace{2mm} \text{as} \hspace{1mm} r \to 0
\end{equation} 
for all $x \in \mathcal{X}$. 
\end{lemma}

\begin{remark}
The factor $\frac{1}{2}$ in \eqref{eq: Isocapacitary Equivalence} is not important and can indeed be replaced by any $K \in (0, 1)$, as can be seen from the proof in Appendix \ref{Example Proofs}. Moreover, the additional assumption of $\beta_{\mathcal{F}_d \kappa} = -\beta_{\frac{1}{\mathcal{F}_d \kappa}}$ is needed to ensure uniform integrability in evaluating norms $||\cdot||_{K}$ at potential functions as $r \to 0$. This is not particularly restrictive, and is satisfied, for example, by Mat\'ern kernels.
\end{remark}
From Lemma \ref{Isoperimetric Equivalance}, we see that Assumption \ref{Opt Smoothness} is indeed an optimality condition that ensures that the weak embedding $H^{\psi} \stackrel{w}{\hookrightarrow} L^{\infty}(\mathcal{X})$ in Assumption \ref{Embedding Condition} is sharp. Indeed, Assumption \ref{Embedding Condition} and therefore \eqref{eq: Weak Embedding Implication} can always be satisfied by choosing a $\psi$ with sufficiently rapid decay at $0$ as long as $\psi(t) \succeq t$ as $t \to 0$ (for example, the trivial choice $\psi(t) = t$ simply produces the obvious embedding  $\mathcal{H}_K \hookrightarrow L^{\infty}(\mathcal{X})$). However, Assumption \ref{Opt Smoothness} ensures that this embedding is sharp by requiring the $\preceq$ in \eqref{eq: Weak Embedding Implication} to be an asymptotic equivalence $\asymp$. In \eqref{eq: Isocapacitary Equivalence}, we see that this equivalence is simply a geometric isocapacitary condition, where $\frac{\psi^{-1}(t)}{t}$ is the profile function that relates the $\mathcal{H}_K$-resistance of the annulus $B(x, r) \setminus B\Big(x, \frac{r}{2}\Big)$  to its infimal $\nu$-volume growth as it contracts toward its center. As the annulus shrinks, it encounters less resistance and the associated Hunt process (recall $\langle \cdot, \cdot \rangle$ is Markovian) started at $x \in \mathcal{X}$ visits $\partial B\Big(x, \frac{r}{2}\Big)$ less frequently before exiting the larger ball $B(x, r)$. The isocapacitary relation in \eqref{eq: Isocapacitary Equivalence} relates this escape behavior from $B(x, r)$ to its $\nu$-volume growth. Note that in \eqref{eq: Isocapacitary Equivalence}, we only take an infimum over $\mathcal{X}$ on the RHS, as the resistance on the LHS is uniform in $\mathcal{X}$ by the radiality of the kernel $K$. 

The condition \eqref{eq: Average Volume Growth} was employed in \cite{ambrosio2018short} to demonstrate that the Laplacian $\Delta_{\nu}$ on $(\mathcal{X}, \nu)$ obeys Weyl asymptotics, i.e. its eigenvalue counting function $\mathcal{N}(\lambda) = \{\#i: \lambda_i \leq \lambda\}$ behaves like $\lambda^{\frac{d}{2}}$ (which we need to ensure agreement with Assumption \ref{Domain Condition} on our domain). Intutively, \eqref{eq: Average Volume Growth} ensures that the $\nu$-volume growth is Euclidean,  $\nu$-almost everywhere (this is notably satisfied by the $\nu$ in Lemmas \ref{Optimal Smoothness Example} and \ref{Mercer Differencing}; high-dimensional volume growth is restricted to a singleton, which  is $\nu$-negligible). Hence, together, Lemmas \ref{Optimal Smoothness Example}, \ref{Mercer Differencing}, and \ref{Isoperimetric Equivalance} suggest a curious phenomenon: a measure $\nu$ can perturb the Mercer spectrum of $K(x, y)$ without disturbing the asymptotics of the Laplacian $\Delta_{\nu}$. This phenomenon can be attributed to the fact that Dirichlet capacities provide a much finer notion of size than Lebesgue-absolutely continuous Radon measures; indeed the embedding $\mathcal{H}_K \hookrightarrow L^{\infty}(\mathcal{X})$ implies \cite{maz2005conductor, kigami2012resistance} that every nonempty set has positive $\mathcal{H}_K$-capacity (even singletons!). Hence, although high-dimensional regular sets  may be $\nu$-negligible and not disturb condition \eqref{eq: Average Volume Growth} (and the asymptotic spectrum of the Laplacian), they are still nontrivial in $\mathcal{H}_K$-capacity and hence disturb the Mercer spectrum. 

Finally, the exponent $N \equiv \lim_{r \to 0} \frac{\log \inf_{y \in \mathcal{X}} \nu(B(y, r))}{\log r}$ of the infimal volume growth in \eqref{eq: Isocapacitary Equivalence} is precisely the smallest $N$ for which the Bakry-Emery curvature condition is satisfied on $RCD^{*}(0, N)$ spaces \cite{sturm2006geometry, ambrosio2018short}, and can alternatively be viewed as the volume exponent realizing the Bishop-Gromov lower bound (Theorem 2.3 in \cite{sturm2006geometry}). Hence, in these settings, we see that the optimality of the embedding $H^{\psi} \stackrel{w}{\hookrightarrow} L^{\infty}(\mathcal{X})$ can be completely characterized by the interaction between the kernel ($\kappa$), topological dimension ($d$), and the ``effective dimension'' $N$ of the metric measure space $(\mathcal{X}, \nu, |||\cdot|||_{d})$. 

\subsubsection{Moving Past Radiality}
\label{Non-Radial}

While the tight spectral estimates derived in Appendix \ref{Lower Bound Proof} assume the radiality of $K(x, y)$ (or at least equivalence to a radial kernel), here we demonstrate that in the general case, the Mercer spectrum can alternatively be estimated under the isocapacitary condition in \eqref{eq: Isocapacitary Equivalence} in lieu of Assumption \ref{Opt Smoothness}. Implicitly, \eqref{eq: Isocapacitary Equivalence} relaxes the radiality assumption by only requiring $\text{cap}_{\mathcal{X}}(B\Big(x, \frac{r}{2}\Big), B(x, r); \mathcal{H}_K)$ be independent of $x \in \mathcal{X}$, which under an elliptic Harnack inequality, is equivalent to uniform escape times of the Hunt process associated with $(\langle \cdot,  \cdot \rangle_K, \mathcal{H}_K)$ on $L^2(\mathcal{X}, dx)$ (elaborated below); it is immediate that these escape times are uniform when $K$ is radial. 

\begin{lemma}
\label{Alternative Spectral Estimates}
Suppose $C_1 n^{-\frac{1}{d}} \leq \epsilon_n(\bar{\mathcal{X}}) \leq C_2 n^{-\frac{1}{d}}$ in Assumption \ref{Domain Condition}. Assume the hypotheses of Lemma \ref{Isoperimetric Equivalance} and suppose
\begin{equation}
\label{eq: Isocapacitary Condition}
    \inf_{y \in \mathcal{X}} \frac{\psi^{-1}\Big(\nu(B(y, r))\Big)}{\nu(B(y, r))} \asymp \frac{1}{\text{cap}_{\mathcal{X}}\Big(B\Big(x, \frac{C_1 r}{8}\Big), B\Big(x, \frac{C_1 r}{4}\Big); \mathcal{H}_K \Big)}
\end{equation}
as $r \to 0$ for all $x \in \mathcal{X}$. Then, under Assumptions \ref{Embedding Condition}-\ref{Eigenfunction Growth} and \ref{Domain Condition}, we obtain:
\begin{equation*}
    \psi^{-1}(s(n)^{-1}) \preceq \mu_{n} \preceq \psi^{-1}(s(n)^{-\frac{1}{1 + \epsilon}})
\end{equation*}
for all $\epsilon > 0$.
\end{lemma}

Our proof of Lemma \ref{Alternative Spectral Estimates} in Appendix \ref{Relaxing Radiality} relies on estimating $\mu_i$ from below using the geometric approach of Grigor'yan, Netrusov, and Yau \cite{grigor2004eigenvalues, grigor1999isoperimetric}. Namely, we construct a candidate eigenspace for $\mu_i$ spanned by the potentials of a collection of disjoint annuli that sufficiently cover $\mathcal{X}$. The Courant-Fisher minimax principle then gives us a lower bound for $\mu_i$ characterized by the ratio of the annulus capacity to $\nu$-volume growth (note that when annuli $\mathcal{H}_K$-resistances satisfy a $\Delta_2$ condition \cite{grigor2014heat}, the constants $\frac{C_1}{4}$ and $\frac{C_1}{8}$ in \eqref{eq: Isocapacitary Condition} are not important). We note that \eqref{eq: Isocapacitary Condition} significantly simplifies our lower bound proof, and we may completely avoid developing the interpolation theory employed in Appendix \ref{Lower Bound Proof}.

When the expected small ball escape times  of the  $\mathcal{H}_K$-associated Hunt process $\{X_t\}_{t \geq 0}$ on $L^2(\mathbb{R}^d, dx)$ (see Definition \ref{Dirichlet Form definition} and Remark \ref{Trace Clarification}) depend only on the ball's radius and not its position, we obtain translation invariant estimates for the capacity (Theorem 3.12 in \cite{grigor2014heat}), and \eqref{eq: Isocapacitary Condition} can be further explicated in terms of the exit time function using the so-called ``Einstein's relation'' (equation 3.3 in \cite{grigor2014heat}). If the exit time function further satisfies the $\Delta_2$ condition, then the factors $\frac{C_1}{4}$  and $\frac{C_1}{8}$ in \eqref{eq: Isocapacitary Condition}  can be absorbed into the $\asymp$. We summarize these observations in the following corollary.

\begin{corollary}
\label{Alternative Spectral Estimates with Escape Rate}
Assume the hypotheses of Lemma \ref{Alternative Spectral Estimates} and suppose further that the Dirichlet form $(\langle \cdot, \cdot \rangle_{K}, \mathcal{H}_K(\mathcal{X}))$ satisfies an elliptic Harnack inequality (see Definition 3.2 in \cite{grigor2014heat}). Let $\{X_t\}_{t \geq 0}$ be the Hunt process associated with $(\langle \cdot, \cdot \rangle_{K}, \mathcal{H}_K(\mathcal{X}))$ on $L^2(\mathcal{X}, dx)$ and consider its exit time function:
\begin{equation*}
    E^{\Omega}(x) = \mathbb{E}_x[\tau_{\Omega}]
\end{equation*}
where $\tau_{\Omega}$ is the exit time of $X_t$ from $\Omega \subset \mathcal{X}$ and $x \in \mathcal{X}$. Let $B = B(x, r) \subset \mathcal{X}$ and suppose there exists a nondecreasing function $F$ such that:
\begin{align*}
    \sup_{y \in B} E^{B}(y) \leq CF(r) \\
    \inf_{y \in \delta B} E^{B}(y) \geq C^{-1}F(r)
\end{align*}
for some $\delta \in (0, 1)$ and $C > 0$. If
\begin{equation}
\label{eq: Isocapacitary Condition with Escape}
    \inf_{y \in \mathcal{X}} \frac{\psi^{-1}\Big(\nu(B(y, r))\Big)}{\nu(B(y, r))} \asymp \frac{F(r)}{r^d}
\end{equation}
as $r \to 0$ for all $x \in \mathcal{X}$, then, under Assumptions \ref{Embedding Condition}-\ref{Eigenfunction Growth} and \ref{Domain Condition}, we obtain:
\begin{equation*}
    \psi^{-1}(s(n)^{-1}) \preceq \mu_{n} \preceq \psi^{-1}(s(n)^{-\frac{1}{1 + \epsilon}})
\end{equation*}
for all $\epsilon > 0$.
\end{corollary}

The proof of this result follows directly from Lemma \ref{Alternative Spectral Estimates} and Theorem 3.12 in \cite{grigor2014heat} upon noting that in the proof of the former (see Appendix \ref{Relaxing Radiality}) the factor $\frac{C_1}{8}$ in \eqref{eq: Isocapacitary Condition} can be replaced by any $\delta_1 < \frac{C_1}{4}$. Observing that $r^d \asymp \text{Vol}(B(x, r))$ in \eqref{eq: Isocapacitary Condition with Escape}, we see how Assumption \ref{Opt Smoothness} directly characterizes a change of measure (from Lebesgue to $\nu$) via the scale function $\psi$ and the dynamics of the $\mathcal{H}_K$-associated Hunt process $X_t$. We emphasize the Hunt process $X_t$ is completely independent of the measure $\nu$, and symmetric with respect to the Lebesgue measure on $\mathcal{X}$ (note that since the small balls in \eqref{eq: Isocapacitary Condition with Escape} are compactly supported in $\mathcal{X}$, we can equivalently consider the $\mathcal{H}_K$-associated Hunt process on $(\mathbb{R}^d, dx)$). We will further develop this probabilistic perspective on misspecification will in an upcoming paper. 
\begin{remark}
It is worth noting that the choice of Laplacian eigenfunctions $\{e_i\}_{i \in \mathbb{N}}$ in Lemmas \ref{Isoperimetric Equivalance}, \ref{Alternative Spectral Estimates}, and Corollary \ref{Alternative Spectral Estimates with Escape Rate} is not special, indeed, in light of \eqref{eq: Average Volume Growth}, the proofs of these lemmas may be adapted for the eigenfunctions of any compact self-adjoint operator on $L^2(\nu)$ which enjoys sharp upper and lower on-diagonal heat kernel estimates. See \cite{grigor2012two} for sufficient conditions. 
\end{remark}

\subsubsection{Related Work}
Assumptions \ref{Embedding Condition} and \ref{Source Condition} can be viewed as a generalization of those in \cite{fischer2020sobolev} to Hilbert scales. Namely, these assumptions reduce to the corresponding source and embedding conditions in \cite{fischer2020sobolev} with the choice of $\phi(t) = t^{\beta}$ and $\psi(t) = t^{\alpha}$ (using their notation). The additional embedding $H^{\phi} \hookrightarrow H^{\psi}$ in Assumption \ref{Embedding Condition} is not strictly necessary for the derivation of minimax rates, but helps simplify the analysis. Intuitively, this embedding ensures that $f^{*}$ is bounded. \\

We emphasize that a distinctive feature of our analysis is that we do not make any direct assumption on the eigendecay of $T_{\nu}$ or its effective dimension. Indeed, while \cite{blanchard2018optimal, blanchard2020kernel} established learning rates in the stronger $\|\cdot \|_{K}$ norm for more general regularization schemes, their analysis only considered Holder-type source conditions and hinged on a particular asymptotic eigendecay of $T_{\nu}$ (which depends on the unknown measure $\nu = P|_{\mathcal{X}}$). \cite{rastogi2020tikhonov} and \cite{rastogi2017optimal} consider general source conditions similar to this paper (and for more general regularization schemes), although they too place conditions on the statistical dimension of $T_{\nu}$ (which is strongly related to its eigendecay). Recently, \cite{lu2020balancing} considered Lepski-based parameter selection strategy for general regularization schemes, and demonstrated that known estimates on the statistical dimension of $T_{\nu}$ can be effectively leveraged for adapting to misspecification. \\

To the best of our knowledge, this is the first work to establish minimax learning rates under general source conditions without appealing to any direct assumptions on the eigendecay or statistical dimension of the Mercer operator $T_{\nu}$. The primary focus of this work is to elucidate the precise interaction of the kernel $K(x, y)$ and the sampling measure $\nu$ in determining the Mercer eigendecay, and the robustness of this spectrum to modifications of the measure $\nu$. In the process, we aim to identify the most generic characterization of $\nu$ which, in conjunction with $K$, completely establishes the Mercer spectrum. Our approach leverages the $L^{\infty}(\mathcal{X})$ embedding of the Hilbert scale $\mathcal{H}^{\psi}$, the distance between $L^{\infty}(\mathcal{X})$ and $L^2(\nu)$ (Assumption \ref{Eigenfunction Growth}), and a compatibility condition between $\psi$ and the Fourier transform of the kernel (Assumption \ref{Opt Smoothness}) to \textit{infer} spectral information on $T_{\nu}$. In a sense, our analysis demonstrates that, via an interpolation inequality, evaluating the tightness of the weak embedding $\mathcal{H}^{\psi} \stackrel{w}{\hookrightarrow} L^{\infty}(\mathcal{X})$ (and thereby the Mercer spectrum) is in a sense equivalent to evaluating the optimality of $\psi$ as an isocapacitary profile function (via Lemma \ref{Isoperimetric Equivalance}). When $\psi$ is indeed optimal, the interpolation inequality (Lemma \ref{Interpolation Inequality}) provides the correct ``change of measure'' (see also Corollary \ref{Alternative Spectral Estimates with Escape Rate}). However, this delicate relationship hinges on a certain degree of local isotropy, characterized either by radiality of the kernel or the uniformity of small-ball escape times in the $\mathcal{H}_K$-assoiated Hunt process on $L^2(\mathcal{X}, dx)$ (see Section \ref{Non-Radial}). 

\vspace{-0.5em}
\subsection{Results}
In presenting our main results, we will always assume the validity of Assumption \ref{Opt Smoothness}, but emphasize that, in light of section \ref{Non-Radial}, we may alternatively assume \eqref{eq: Isocapacitary Condition} or \eqref{eq: Isocapacitary Condition with Escape}. We first present our upper bound on the mean-square error in Theorem \ref{Main Upper}. 
\begin{theorem}
\label{Main Upper}
Suppose Assumptions \ref{Embedding Condition}-\ref{Opt Smoothness} hold. Let $\tilde{s}(t) = s^{-1}(t^{-1})$. Then for any sufficiently small $\epsilon > 0$, choosing $\lambda_n \asymp (\frac{\phi}{\tilde{s} \circ \psi^{1 + \epsilon}})^{-1}(n^{-1})$, we obtain, with probability $1 - 2\delta$:
\begin{equation}
\label{eq:Upper Bound}
  ||f_{D, \lambda_n} - f^{*}||_{L^2(\nu)} \preceq \log(\delta^{-1})\sqrt{\phi\Big(\Big(\frac{\phi}{\tilde{s} \circ \psi^{1 + \epsilon}}\Big)^{-1}(n^{-1})\Big)}
\end{equation}
\end{theorem}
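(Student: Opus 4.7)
The plan is to use the bias-variance decomposition $f_{D,\lambda_n}-f^* = (f_{D,\lambda_n}-f_{\lambda_n})+(f_{\lambda_n}-f^*)$ and bound each term separately. For the bias, Assumption \ref{Source Condition} lets me write $f^* = \phi^{1/2}(T_\nu)u$ for some $u\in L^2(\nu)$, and the intertwining $I_\nu(C_\nu+\lambda)^{-1} = (T_\nu+\lambda)^{-1}I_\nu$ yields
$$\|f_{\lambda_n}-f^*\|_{L^2(\nu)}^2 \leq \sup_{t \leq \|T_\nu\|}\frac{\lambda_n^2\,\phi(t)}{(t+\lambda_n)^2}\,\|u\|_{L^2(\nu)}^2 \preceq \phi(\lambda_n),$$
where the last step is the usual qualification bound for Tikhonov regularization, valid because $t/\phi(t)$ is nondecreasing (Assumption \ref{Growth Conditions}(a)).

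The variance term is where the real work lies. The plan is to write $f_{D,\lambda_n}-f_{\lambda_n} = (C_D+\lambda_n)^{-1}(g_D-C_D f_{\lambda_n})$ and introduce the population resolvent via the factorization through $(C_\nu+\lambda_n)^{1/2}(C_D+\lambda_n)^{-1}(C_\nu+\lambda_n)^{1/2}$, whose operator norm concentrates around $1$ by standard Hilbert-space perturbation bounds once $n$ dominates the effective dimension $\mathcal{N}(\lambda_n) = \mathrm{tr}\,C_\nu(C_\nu+\lambda_n)^{-1}$. A Bernstein inequality in $\mathcal{H}_K$ under the subexponential noise of Assumption \ref{Subexponential Noise} then yields
$$\|f_{D,\lambda_n}-f_{\lambda_n}\|_{L^2(\nu)} \preceq \log(\delta^{-1})\left(\sqrt{\frac{\mathcal{N}(\lambda_n)}{n}}+\frac{1}{n\sqrt{\lambda_n}}\right),$$
and the crux of the argument is bounding $\mathcal{N}(\lambda_n)$ without postulating any eigendecay of $T_\nu$. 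I would do this by combining (i) the Bernstein-width lower bound $b^2_{n-1}(L^\infty,L^2(\nu))\succeq 1/s(n)$ extracted from Assumption \ref{Eigenfunction Growth}; (ii) an upper estimate on $b_n(\mathcal{H}_K,L^\infty)$ of order $\mathcal{F}_d\kappa(n^{1/d})$, following from the radial structure of Assumption \ref{Radial Kernel} and the entropy estimates of Assumption \ref{Domain Condition}; and (iii) the Gagliardo--Nirenberg-type interpolation inequality (cf.\ Lemma \ref{Interpolation Inequality}) combined with the tightness statement of Lemma \ref{Tight M-Space} and the compatibility Assumption \ref{Opt Smoothness} to identify $\mu_i \asymp \psi^{-1}(1/s(i))$. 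Counting indices with $\mu_i\geq\lambda$ then gives $\mathcal{N}(\lambda) \preceq \tilde s(\psi(\lambda))$.

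With this estimate in hand, balancing bias and variance forces $\phi(\lambda_n)\asymp \tilde s(\psi(\lambda_n))/n$, i.e.\ $\lambda_n\asymp(\phi/(\tilde s\circ\psi))^{-1}(1/n)$. The $\Delta_2$ regularity of $\phi$, $\psi$, and $s$ from Assumptions \ref{Growth Conditions} and \ref{Eigenfunction Growth} ensures that $\phi/(\tilde s\circ\psi)$ is monotone with a well-defined inverse and that the subleading $1/(n\sqrt{\lambda_n})$ term is absorbed into $\sqrt{\phi(\lambda_n)}$, producing the claimed rate $\log(\delta^{-1})\sqrt{\phi(\lambda_n)}$. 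The hardest step, I expect, is the transfer from Bernstein-width comparison to the sharp two-sided Mercer eigendecay in step (iii) above: this is where Lemma \ref{Tight M-Space} is essential, and the argument is sensitive to the growth hypotheses on $\psi$ (the equality $\alpha_\psi=\beta_\psi$ in the sharp case, with the Boas-type weakening of Corollary \ref{Boas Corollary} covering the general case).
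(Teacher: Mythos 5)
Your overall architecture (bias--variance split, factorization through $(C_\nu+\lambda)^{1/2}(C_D+\lambda)^{-1}(C_\nu+\lambda)^{1/2}$, Bernstein concentration, and the reduction of the effective dimension to an eigendecay estimate $\mu_i\asymp\psi^{-1}(s(i)^{-1})$ obtained by comparing Bernstein widths in $L^\infty$ with the $L^2(\nu)$ widths through a Gagliardo--Nirenberg interpolation) matches the paper's proof. The bias estimate and the balancing of the two resulting terms are also carried out the same way. Two remarks are worth flagging.

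First, there is a genuine quantitative gap in your variance bound: you write the subleading term as $\frac{1}{n\sqrt{\lambda_n}}$ and assert that it is ``absorbed into $\sqrt{\phi(\lambda_n)}$.'' That subleading term comes from bounding the feature map under the resolvent, $\sup_x\|(C_\nu+\lambda)^{-1/2}k(x,\cdot)\|_K$, and the factor $\lambda^{-1/2}$ corresponds to the crude estimate $\|(C_\nu+\lambda)^{-1/2}k(x,\cdot)\|_K\leq \|k\|_\infty/\sqrt{\lambda}$, which uses only the bounded-kernel property. With the paper's choice $\lambda_n\asymp(\phi/(\tilde s\circ\psi))^{-1}(1/n)$, the absorption $\frac{1}{n^2\lambda_n}\preceq\phi(\lambda_n)$ is \emph{not} implied by Assumptions \ref{Embedding Condition}--\ref{Opt Smoothness}. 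In the power-law regime $\phi(t)=t^\beta$, $\psi(t)=t^\alpha$, $s(n)=n^a$ (with the required $\alpha\leq\beta<1$, $a>1$), it reduces to $a(1-\beta)\leq 2\alpha$, which fails for instance when $\alpha=0.3$, $\beta=0.5$, $a=2$. The paper avoids this by proving the refined operator bound of Lemma \ref{h-norm Bound}, namely $\sup_x\|(C_\nu+\lambda)^{-1/2}k(x,\cdot)\|_K\leq\|k^\psi\|_\infty/\sqrt{\psi(\lambda)}$, which is obtained from the concavity of $t/\psi(t)$ via the interpolation inequality of Lemma \ref{Interpolation Inequality}. This replaces $\lambda^{-1/2}$ by $\psi(\lambda)^{-1/2}$ in the variance bound (see Lemma \ref{Variance Bound}), and the corresponding absorption $\frac{1}{n^2\psi(\lambda_n)}\preceq\phi(\lambda_n)$ is always valid under the stated assumptions, because it reduces to $a(\alpha-\beta)\leq 2\alpha$, which holds automatically since $\alpha\leq\beta$. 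Your argument needs this sharper feature-map bound; without it the claim breaks.

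Second, a minor misattribution: for the effective-dimension estimate $\mathcal{N}(\lambda)\preceq\tilde s(\psi(\lambda))$ you invoke Lemma \ref{Tight M-Space}, but that lemma plays no role in the eigendecay derivation. The two-sided estimate $\mu_n\asymp\psi^{-1}(s(n)^{-1})$ is Corollary \ref{Eigenvalue Lower Bound}: its upper half follows directly from Assumption \ref{Eigenfunction Growth} and the finiteness of $\|k^\psi\|_\infty$; its lower half from Lemmas \ref{Bernstein-Eigenvalue Formula} and \ref{Bernstein-Fourier} together with Assumption \ref{Opt Smoothness}. In fact for the upper bound of Theorem \ref{Main Upper} only the upper estimate on $\mu_n$ is needed, which is precisely why the paper remarks that Assumptions \ref{Radial Kernel}--\ref{Opt Smoothness} enter only in the lower-bound (optimality) argument. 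Lemma \ref{Tight M-Space} instead characterizes when Assumption \ref{Opt Smoothness} can be interpreted as an optimal Lorentz range condition; it is a descriptive result rather than a step in the chain.
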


\vspace{-0.5em}
Intuitively, the function $\phi\Big(\Big(\frac{\phi}{\tilde{s} \circ \psi^{1 + \epsilon}}\Big)^{-1}\Big)$ appearing in \eqref{eq:Upper Bound} captures the distance between $\mathcal{H}^{\phi}$ and the hypothesis class $\mathcal{H}_K$, i.e. the ``degree of misspecification''. Indeed, since $\psi^{-1}(s(i)^{-1}) \preceq \mu_i \preceq \psi^{-1}(s(i)^{-\frac{1}{1 + \epsilon}})$ (as shown in Appendix \ref{Lower Bound Proof}), the upper rate effectively compares $\phi(\mu_i)$ to $\mu_i$. For example, when $\phi(t) = t^{\beta}$, $\psi(t) = t^{\alpha}$, and $\mu_i \asymp i^{-\frac{1}{p}}$ (for $\alpha, \beta, p \in (0, 1)$) as in \cite{fischer2020sobolev}, then $n^{-\frac{\beta}{p + \beta}} \preceq \phi\Big(\Big(\frac{\phi}{\tilde{s} \circ \psi^{1 + \epsilon}}\Big)^{-1}\Big)(n^{-1}) \preceq n^{-\frac{\beta}{p(1 + \epsilon) + \beta}}$ for all sufficiently small $\epsilon > 0$. If $\frac{\beta}{p}$ is close to $1$, then $\mathcal{H}^{\phi}$ is much closer to $L^{\infty}(\mathcal{X})$ than $\mathcal{H}_K$ and the setting is poorly specified, producing a slow rate close to $n^{-\frac{1}{4}}$. If $\beta \gg p$ is large, i.e. $\mathcal{H}^{\phi}$ is close to $\mathcal{H}_K$, our setting is well-specified, and we approach the fast rate of $n^{-\frac{1}{2}}$. We note that the $\epsilon$-error in \eqref{eq:Upper Bound} is an unavoidable artifact of the weak embedding $\mathcal{H}^{\psi} \stackrel{w}{\hookrightarrow} L^{\infty}(\mathcal{X})$ in Assumption \ref{Embedding Condition}, namely by Definition \ref{Weak Embedding Def}, $\psi$ is merely an infimum of scale functions paramterizing spaces lying in $L^{\infty}(\mathcal{X})$. In other words, in order to identify the ``optimal'' scale function $\psi$ (Assumption \ref{Opt Smoothness}), we need to consider spaces $H^{\psi}$ that potentially lie \textit{just} outside $L^{\infty}(\mathcal{X})$. \\


The proof of Theorem \ref{Main Upper} hinges on the following bias-variance decomposition, which illustrates that, up to a constant, both components of the error can be controlled purely in terms of $\phi, \psi$, and $s$: 

\begin{lemma}
Suppose Assumptions \ref{Embedding Condition}-\ref{Opt Smoothness} hold. Then, for all sufficiently small $\epsilon > 0$:
\label{Bias-Variance Decomposition}
\begin{equation}
\label{eq:BV}
    ||f^{*} - f_{D, \lambda}||_{L^2(\nu)} \preceq  ||f^{*}||_{\phi}\sqrt{\phi(\lambda)} + ||k^{\psi^{1 + \epsilon}}||_{\infty}||f^{*}||_{\phi}\sqrt{\frac{\log(2\delta^{-1})}{n}\Big(\frac{1}{n\psi^{1 + \epsilon}(\lambda)} + \frac{\phi(\lambda)}{\psi^{1 + \epsilon}(\lambda)} + s^{-1}(\psi(\lambda)^{-1 -  \epsilon})\Big)}
\end{equation}
with probability $1 - 3\delta$. 
\end{lemma}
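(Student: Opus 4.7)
The plan is to decompose $f^{*} - f_{D,\lambda} = (f^{*} - I_{\nu} f_{\lambda}) + I_{\nu}(f_{\lambda} - f_{D,\lambda})$, where $f_{\lambda} = (C_{\nu} + \lambda)^{-1} S_{\nu} f^{*}$ is the regularized population solution, and to bound the two pieces separately. For the bias $\|f^{*} - I_{\nu} f_{\lambda}\|_{L^{2}(\nu)}$, I would use the source representation $f^{*} = \phi^{1/2}(T_{\nu}) g$ with $\|g\|_{2} = \|f^{*}\|_{\phi}$ together with the resolvent identity $f^{*} - I_{\nu} f_{\lambda} = \lambda(T_{\nu} + \lambda)^{-1} f^{*}$. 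Spectrally, this reduces the task to the scalar estimate $\sup_{t \in (0, \|T_{\nu}\|)}\frac{\lambda^{2}\phi(t)}{(t+\lambda)^{2}}$; Assumption \ref{Growth Conditions} (specifically $t/\phi(t)$ nondecreasing, hence $\phi(t) \leq \phi(\lambda) t/\lambda$ for $t \geq \lambda$) bounds this supremum by $\phi(\lambda)$, yielding the first summand $\|f^{*}\|_{\phi}\sqrt{\phi(\lambda)}$.

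For the sample error I would exploit the perturbation identity $f_{D,\lambda} - f_{\lambda} = (C_{D} + \lambda)^{-1}\eta$. A useful cancellation (noting that $S_{\nu}f^{*}$ and $C_{\nu}f^{*}$ coincide once $C_{\nu}$ is extended to bounded functions via $C_{\nu} f = \int f(x) k(x,\cdot)\, d\nu$) collapses the residual to
\begin{equation*}
\eta = S_{D}^{*}\epsilon + (C_{\nu} - C_{D})(f_{\lambda} - f^{*}),
\end{equation*}
with $S_{D}^{*}\epsilon := \frac{1}{n}\sum_{i}\epsilon_{i} k(X_{i},\cdot)$. Preconditioning by $(C_{\nu}+\lambda)^{1/2}$ and invoking an operator Bernstein inequality --- which gives $\|(C_{\nu}+\lambda)^{1/2}(C_{D}+\lambda)^{-1/2}\|_{\mathrm{op}}^{2} \leq 2$ on an event of probability $1-\delta$ --- reduces the task to bounding $\|(C_{\nu}+\lambda)^{-1/2}\eta\|_{\mathcal{H}_{K}}$. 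The crucial pointwise input is $\|(C_{\nu}+\lambda)^{-1/2}k(x,\cdot)\|_{\mathcal{H}_{K}}^{2} = \sum_{i}\frac{\mu_{i}e_{i}(x)^{2}}{\mu_{i}+\lambda} \leq \|k^{\psi}\|_{\infty}^{2}/\psi(\lambda)$, which follows from the concavity of $t/\psi(t)$ in Assumption \ref{Growth Conditions} and the resulting inequality $\mu_{i}/(\mu_{i}+\lambda) \leq \psi(\mu_{i})/\psi(\lambda)$.

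I would then apply Hilbert-valued Bernstein to each summand of $\eta$. For the noise summand the variance is $\sigma^{2} N(\lambda)$ with $N(\lambda) = \mathrm{tr}(T_{\nu}(T_{\nu}+\lambda)^{-1})$, which I identify with $s^{-1}(\psi(\lambda)^{-1})$ via the eigenvalue asymptotic $\mu_{i} \asymp \psi^{-1}(s(i)^{-1})$ established in Appendix \ref{Lower Bound Proof} from Assumptions \ref{Eigenfunction Growth} and \ref{Opt Smoothness}; the subexponential Bernstein tail (Assumption \ref{Subexponential Noise}) contributes the $\frac{1}{n\psi(\lambda)}$ term. For the operator-perturbation summand the key input is the refined spectral bound $\|f_{\lambda} - f^{*}\|_{\psi}^{2} \leq \frac{\phi(\lambda)}{\psi(\lambda)}\|f^{*}\|_{\phi}^{2}$, derived from $\sup_{t}\frac{\lambda^{2}\phi(t)}{\psi(t)(t+\lambda)^{2}} \leq \phi(\lambda)/\psi(\lambda)$ using both monotonicities ($\phi/\psi$ nondecreasing and $\phi(t)/t$ nonincreasing), combined with the embedding $\|f_{\lambda} - f^{*}\|_{\infty} \leq \|k^{\psi}\|_{\infty}\|f_{\lambda} - f^{*}\|_{\psi}$ to bound the sup parameter in Bernstein; this produces the $\frac{\phi(\lambda)}{\psi(\lambda)}$ term.

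The principal obstacle is the coordination of the three independent concentration events --- the operator Bernstein for the preconditioner plus the two Hilbert-valued Bernstein applications --- which collectively exhaust the $3\delta$ probability budget in the statement. A subtler difficulty is that the Fourier-capacity framework enters only indirectly, through the black-box eigenvalue asymptotic $N(\lambda) \asymp s^{-1}(\psi(\lambda)^{-1})$ imported from Appendix \ref{Lower Bound Proof}; this lets the bias-variance decomposition itself remain a clean exercise in spectral calculus, with the nonstandard Fourier/Bernstein-width ingredients absorbed into a single identification. Finally, care is needed in the operator-term spectral bound to ensure that the supremum over $t > \lambda$ is controlled by $\phi(\lambda)/\psi(\lambda)$ rather than by a $\lambda$-independent constant --- this uses the monotonicity of $t\psi(t)$ in a way particular to the Hilbert-scale setting.
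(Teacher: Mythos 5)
Your proposal is correct and follows essentially the same route the paper takes in Appendix~\ref{Upper Bound Proof}: the same split $f^*-f_{D,\lambda}=(f^*-f_\lambda)+(f_\lambda-f_{D,\lambda})$, the same scalar spectral estimate $\sup_t\lambda^2\phi(t)/(t+\lambda)^2\le\phi(\lambda)$ for the bias, the same $(C_\nu+\lambda)^{1/2}$-preconditioning with an operator concentration bound for $\|(C_\nu+\lambda)^{1/2}(C_D+\lambda)^{-1}(C_\nu+\lambda)^{1/2}\|$, and the same three inputs $\sup_x\|h(x,\cdot)\|_K$, $\|f_\lambda-f^*\|_\infty$, and $\mathcal{N}(\lambda)\preceq s^{-1}(\psi(\lambda)^{-1})$ (Lemma~\ref{Effective Dimension Bound}, itself built on the eigenvalue asymptotic) feeding a Hilbert-valued Bernstein inequality to produce the three variance summands.

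Two local deviations are worth noting, both benign. First, you propose two separate Bernstein applications for the noise piece $S_D^*\epsilon$ and the perturbation piece $(C_\nu-C_D)(f_\lambda-f^*)$; the paper (Lemma~\ref{Variance Bound}) instead applies a single Bernstein to the combined variable $h(X,\cdot)\,(f_\lambda(X)-Y)$ and carries out your split inside the $p$-th moment computation, which is why the paper's probability budget ends up $1-3\delta$ with a single union bound rather than your three-event accounting --- the two bookkeepings are equivalent up to constants. Second, your direct spectral computation $\|(C_\nu+\lambda)^{-1/2}k(x,\cdot)\|_K^2=\sum_i\mu_ie_i(x)^2/(\mu_i+\lambda)\le\|k^\psi\|_\infty^2/\psi(\lambda)$ (via $\mu_i/(\mu_i+\lambda)\le\psi(\mu_i)/\psi(\lambda)$, which uses that $t/\psi(t)$ is nondecreasing, a consequence of Assumption~\ref{Growth Conditions} rather than of concavity per se) is a genuinely more elementary proof of Lemma~\ref{h-norm Bound} than the paper's argument, which routes through the Gagliardo--Nirenberg interpolation inequality (Lemma~\ref{Interpolation Inequality}). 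The paper keeps the interpolation machinery because the same Lemma~\ref{Interpolation Inequality} is reused to relate Bernstein widths in the lower bound (Lemma~\ref{Bernstein-Eigenvalue Formula}); for the upper bound alone, your spectral version is cleaner.
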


As noted above, we observe here that the kernel only appears in \eqref{eq:BV} via the constant term $||k^{\psi^{1 + \epsilon}}||_{\infty}$, and hence, given the geometric functions $\psi, \phi$, and $s$, $K$ does not directly influence the asymptotic behavior of the upper bound as $\lambda \to 0$. This surprising absence of the kernel (given the aforementioned profile functions), can be attributed to the embedding conditions in Assumption \ref{Embedding Condition}, which ensure that (via a Gagliardo-Nirenberg type inequality) uniform bounds on the bias depend only on the \textit{ratios} between norms in $\mathcal{H}^{\psi}$, and $\mathcal{H}^{\phi}$ and $\mathcal{H}_K$, respectively. Moreover, due to relative growth rates of the index functions implied by compactness, these capacity ratios depend solely on $\phi$, $\psi$, and $s$. \\

We now discuss the optimality of the upper bound in Theorem \ref{Main Upper}. The derivation of a minimax lower bound near-matching \eqref{eq:Upper Bound} is significantly more involved and hinges crucially on the sharpness of the weak embedding $\mathcal{H}^{\psi} \stackrel{w}{\hookrightarrow} L^{\infty}(\mathcal{X})$, which in turn depends on the compactness properties of the domain $\mathcal{X} \subset \mathbb{R}^d$ (captured by the entropy condition in Assumption \ref{Domain Condition}) and the smoothness of the kernel $\kappa$. Unlike the derivation of the upper bound, here the analysis does not reduce simply to a comparison of the index functions, as we must demonstrate the maximality of the spectral function $\psi$ in ``stretching'' the kernel class $\mathcal{H}_K$. Informally, we need to ensure that the enlarged (infinite-dimensional) ellipsoid $B(\psi(\mathcal{H}_K))$ is ``maximal'' relative to $L^{\infty}(\mathcal{X})$, otherwise our use of $\mathcal{H}^{\psi}$ as a proxy for $L^{\infty}(\mathcal{X})$ in the derivation of \eqref{eq:Upper Bound} is suboptimal (i.e. $||\cdot||_{\mathcal{H}^{\psi}}$ may provide a very coarse approximation of $||\cdot||_{\infty}$). We quantify this maximality via estimating the squared Gelfand widths of $\mathcal{H}_K$ in $L^{\infty}(\mathcal{X})$, which can then be compared to the eigenvalues $\mu_i$ (the squared Gelfand widths of $\mathcal{H}_K$ in $L^{2}(\nu)$; see e.g. Prop. 5 in \cite{mathe2008direct}) via the Gagliardo interpolation inequality. The former widths are independent of the measure $\nu$ and depend only on the smoothness of $\kappa$ (captured by its Fourier decay) and the compactness of $\mathcal{X}$. Intuitively, the weak embedding $\mathcal{H}^{\psi} \stackrel{w}{\hookrightarrow}  L^{\infty}(\mathcal{X})$ is sharp (and our learning rate in \eqref{eq:Upper Bound} is optimal) if the index function $\psi$ provides the correct ``change-of-measure'' from $\nu$ to the $d$-dimensional Lebesgue measure (see also Lemmas \ref{Isoperimetric Equivalance} and \ref{Alternative Spectral Estimates}, and Corollary \ref{Alternative Spectral Estimates with Escape Rate}). 

\begin{theorem}
\label{Lower Bound}
There exists a distribution $P$ on $\mathcal{X} \times \mathbb{R}$ satisfying Assumption \ref{Subexponential Noise} with $P|_{\mathcal{X}} = \nu$, $||f^{*}_{P}||_{\phi} \leq B_{\phi}$, $||f^{*}_{P}||_{\infty} \leq B_{\infty}$, such that, for any learning algorithm $\mathcal{D} \mapsto f_D$, we have with $P^{n}$ probability not less than $1 - \frac{432\tau K^{\log \tau}}{U_{d, \infty, \phi}\sigma^2\log 2} > 0$:
\begin{equation*}
    ||f_{D} - f^{*}_P||_{L^2(\nu)} \succeq \sqrt{\tau \phi\Big(\Big(\frac{\phi}{\tilde{s} \circ \psi}\Big)^{-1}(n^{-1})\Big)}
\end{equation*}
where $U_{d, \infty, \phi}$ does not depend on $n$ or $\tau$ and $K > 1$ is an absolute constant.
\end{theorem}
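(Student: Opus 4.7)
The plan is to reduce the estimation problem to multiple hypothesis testing over a finite packing of $\mathcal{H}^{\phi}$, apply a Fano-type inequality, and calibrate the packing parameters to match the upper bound in Theorem \ref{Main Upper}. The main novelty, relative to classical minimax lower bounds (e.g., \cite{caponnetto2007optimal, fischer2020sobolev}), is that we cannot assume a polynomial Mercer eigendecay as an input; instead, we must first \emph{infer} the decay $\mu_i \asymp \psi^{-1}(s(i)^{-1})$ from the embedding, Fourier, and Christoffel-type conditions, and then feed this into a standard packing construction.

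First I would establish the spectral estimate $\mu_i \asymp \psi^{-1}(s(i)^{-1})$, as advertised in the discussion preceding Lemma \ref{Tight M-Space}. One direction comes from comparing Bernstein widths: by Assumption \ref{Eigenfunction Growth}, $b_{n-1}^2(L^{\infty}, L^2(\nu)) \succeq s(n)^{-1}$, so the Gagliardo-Nirenberg type interpolation inequality of \cite{mathe2008direct}, combined with the chain of embeddings in Assumption \ref{Embedding Condition}, couples $\mu_{n+1} = b_n^2(\mathcal{H}_K, L^2(\nu))$ to $\psi^{-1}(s(n)^{-1})$. The matching direction follows by estimating $b_n^2(\mathcal{H}_K, L^{\infty})$ via the radiality of $k$ (Assumption \ref{Radial Kernel}) and the entropy bound on $\mathcal{X}$ (Assumption \ref{Domain Condition}): standard kernel width estimates yield $b_n^2(\mathcal{H}_K, L^{\infty}) \asymp \mathcal{F}_d \kappa(n^{1/d})$, which, when combined with the optimality Assumption \ref{Opt Smoothness} (so that $\psi(\mathcal{F}_d\kappa(n^{1/d})/s(n)) \asymp s(n)^{-1}$) and the interpolation inequality again, pins down $\mu_n \asymp \psi^{-1}(s(n)^{-1})$.

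With the spectral identification in hand, I would proceed with a classical Fano-type construction. Fix a truncation level $N$ to be chosen, and for each sign vector $\omega \in \{-1,+1\}^N$ set
\begin{equation*}
f_{\omega} \;=\; \epsilon \sum_{i=1}^{N} \omega_i \sqrt{\phi(\mu_i)}\, e_i,
\end{equation*}
for a small $\epsilon > 0$ to be tuned. By construction $\|f_\omega\|_\phi^2 = N\epsilon^2$, so choosing $\epsilon^2 \asymp N^{-1}$ keeps $f_{\omega}$ inside the prescribed $\phi$-ball; Assumption \ref{Eigenfunction Growth} controls $\|f_\omega\|_\infty$ analogously. The $L^2(\nu)$-separation is $\|f_\omega - f_{\omega'}\|_{L^2(\nu)}^2 \asymp \epsilon^2 \sum_i \phi(\mu_i)\mathbf{1}\{\omega_i \neq \omega_i'\}$, and a Gilbert--Varshamov extraction produces an exponentially large subfamily whose pairwise Hamming distance is at least $N/8$. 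For the information side, I would take a Gaussian response model (which satisfies Assumption \ref{Subexponential Noise}) so the product KL between $P_\omega^n$ and $P_{\omega'}^n$ is proportional to $n\|f_\omega - f_{\omega'}\|^2_{L^2(\nu)}$, then invoke Fano. Balancing $N$ so that the KL budget matches $\log(\text{packing size}) \asymp N$ yields $n \epsilon^2 \phi(\mu_N)/\psi(\mu_N) \asymp 1$ after substituting $\mu_i \asymp \psi^{-1}(s(i)^{-1})$; using the $\Delta_2$ condition on $\phi,\psi,s$ to rearrange then gives $\phi(\mu_N) \asymp \phi\big((\phi/(\tilde{s}\circ\psi))^{-1}(n^{-1})\big)$, which is the claimed rate. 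The scaling factor $\tau$ in the probability statement is absorbed by rescaling $\epsilon$, producing the $K^{\log\tau}$ prefactor typical of Fano applications.

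The hardest step is the spectral inference. Obtaining a sharp two-sided bound on $\mu_n$ from only a one-sided Christoffel-type bound plus Fourier decay requires converting the Fourier decay of $\kappa$ into a \emph{sharp} two-sided Bernstein width estimate on compact $\mathcal{X}$; this is where the dilation condition built into Assumption \ref{Radial Kernel} and the optimality conclusion of Lemma \ref{Tight M-Space} do the real work, as previewed in Appendix \ref{Lower Bound Proof}. Once this is settled, the remainder is a routine Fano calibration, with the $\Delta_2$ conditions absorbing constants so that the final rate appears in the composed form $\phi\big((\phi/(\tilde{s}\circ\psi))^{-1}(n^{-1})\big)$ stated in the theorem.
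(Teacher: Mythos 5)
Your high-level strategy — first infer the Mercer eigendecay $\mu_i \asymp \psi^{-1}(s(i)^{-1})$ from the Bernstein-width comparisons (Lemmas \ref{Bernstein-Eigenvalue Formula}, \ref{Bernstein-Fourier}, Corollary \ref{Eigenvalue Lower Bound}), then run a Fano/packing argument — is exactly the paper's approach, and your description of the spectral inference step is accurate. The gap is in the packing construction itself.

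You propose $f_\omega = \epsilon\sum_{i=1}^N \omega_i\sqrt{\phi(\mu_i)}e_i$ with $\omega\in\{-1,+1\}^N$. This makes $\|f_\omega\|_\phi^2 = N\epsilon^2$ uniform across codewords, but pushes the index-dependence into the $L^2(\nu)$ metric: $\|f_\omega - f_{\omega'}\|^2_{L^2(\nu)} = 4\epsilon^2\sum_{\omega_i\neq\omega_i'}\phi(\mu_i)$, which depends on \emph{which} coordinates differ, not just how many. A Gilbert--Varshamov code only controls the Hamming distance, so the only generic separation you can extract is the worst case $\succeq \epsilon^2 N\phi(\mu_N)$. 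Meanwhile the KL budget is governed by $n\epsilon^2\sum_{i\le N}\phi(\mu_i)$; under the standing hypotheses (which force $\phi(\mu_i)\asymp i^{-a}$ with $a>1$ in the power-law case, since $p>1$ from Assumption \ref{Eigenfunction Growth} and $\beta>\alpha$ from Assumption \ref{Embedding Condition}) this sum converges, so Fano yields $N\succeq\sqrt{n}$ and the separation calibrates to $\phi(\mu_{\sqrt n})\asymp n^{-a/2}$. The target rate is $\phi(\mu_{N})$ with $N\asymp n\phi(\mu_N)$, i.e. $n^{-a/(a+1)}$, which is strictly larger. Your construction therefore proves a valid but strictly weaker lower bound, and does not match the upper bound in Theorem \ref{Main Upper}. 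Relatedly, the balancing equation you write down, $n\epsilon^2\phi(\mu_N)/\psi(\mu_N)\asymp 1$, is not derivable from your construction's KL and $\phi$-norm constraints — the $\psi(\mu_N)$ factor does not appear — so the final step does not go through.

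The fix (and what the paper does) is to place uniform, unweighted coefficients on a shifted spectral band: $f_\omega = 2\sqrt{8\epsilon/m}\sum_{i=1}^m\omega_i e_{i+m}$ with $\omega\in\{0,1\}^m$. Then $\|f_\omega - f_{\omega'}\|^2_{L^2(\nu)} = \frac{32\epsilon}{m}\sum(\omega_i-\omega_i')^2$ depends only on the Hamming distance, so every Gilbert--Varshamov pair is separated by $\geq 4\epsilon$ uniformly, and the KL is bounded by $16n\epsilon/\sigma^2$ uniformly in $\omega$. The $\phi$-norm and $L^\infty$-norm constraints are then absorbed via the smallest eigenvalue $\mu_{2m}$ in the band, using Corollary \ref{Eigenvalue Lower Bound} and the $\Delta_2$ conditions. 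This decoupling of the packing geometry (which must be uniform for Fano) from the regularity constraints (which may vary) is the key design choice your proposal misses.
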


We note that in Theorem \ref{Lower Bound} we can always ensure the probability $1 - \frac{432\tau K^{\log \tau}}{U_{d, \infty, \phi}\sigma^2\log 2} > 0$ through a sufficiently small choice of $\tau > 0$. Collectively, Theorems \ref{Main Upper} and \ref{Lower Bound} demonstrate that the mean squared error of kernel ridge regression can be completely described in terms of \textit{three geometric quantities}: the \textit{effective dimension} of the metric measure space $(\mathcal{X}, \nu, |||\cdot|||_{d})$ (i.e. $s(n)$, see Lemmas \ref{Isoperimetric Equivalance} and \ref{Alternative Spectral Estimates}), and the \textit{``spectral distances''} from the hypothesis class $\mathcal{H}_K$ to $L^{\infty}(\mathcal{X})$ and the learning target $f^{*}$ (characterized by the profile functions $\psi$ and $\phi$, respectively).
\section{Conclusion}
In this paper, we derive novel minimax learning rates for ridge regression over Hilbert scales with general source conditions. Our analysis hinges on a detailed derivation of sharp estimates for kernel eigendecay, that elucidate the precise interaction of kernel, measure, dimension, and functional geometry in characterizing the complexity of the RKHS. Our approach is based on estimating the Gelfand widths of the kernel class in $L^{\infty}(\mathcal{X})$ via a new Fourier capacity condition, which characterizes the complexity of the kernel through an isocapacitary condition on small balls in the metric measure space $(\mathcal{X}, \nu, |||\cdot|||_{d})$. In the process, we examine the influence of kernel Dirichlet capacities (Lemmas \ref{Isoperimetric Equivalance} and \ref{Alternative Spectral Estimates}) and the local dynamics of the RKHS-associated Hunt process (Corollary \ref{Alternative Spectral Estimates with Escape Rate}) in determining the relative smoothness of the kernel class in $L^2(\nu)$. Our analysis suggests that a capacitary approach to large deviations, as opposed to a purely measure-theoretic perspective, may be promising in the context of finite-sample asymptotics for nonparametric regression. Future work involves further developing this probabilistic perspective on ridge regression in both offline and online learning settings. 

\printbibliography

\appendix

\section{Proofs of Lemma \ref{Optimal Smoothness Example}, Lemma \ref{Mercer Differencing}, and Lemma \ref{Isoperimetric Equivalance}}
\label{Example Proofs}
In the following result, we construct a ``pseudo-eigenvector'' for the Toeplitz (moment) matrix of the density $d\nu$, that enjoys certain desirable asymptotic properties which we will exploit in the proof of Lemma \ref{Optimal Smoothness Example}. For any $f \in C(\mathbb{R})$, our eigenfunction takes the form:
\begin{equation}
\label{Toeplitz Eigenfunction}
V_n f \equiv \frac{1}{\sqrt{n+1}}\sum_{m = -\infty}^{\infty} f\Big(\frac{(m + 1)\pi}{n+1}\Big)z^m
\end{equation}
If $f\Big(\frac{(m + 1)\pi}{n+1}\Big) \neq 0$ only for $m \in \{0, 1, \ldots, n\}$, we write $\tilde{V}_n f \in \mathbb{R}^{n + 1}$ for the corresponding vector with entries $(\tilde{V}_n(f))_{m} = \frac{f\Big(\frac{(m + 1)\pi}{n+1}\Big)}{\sqrt{n+1}}$ for $0 \leq m \leq n$. The constructed pseudomode in \eqref{Toeplitz Eigenfunction} resembles the approximate eigenvectors studied in \cite{bottcher2008asymptotic}; we demonstrate that for a careful choice of $f$, $V_n f$ exhibits stronger spectral approximation properties for the particular symbol $d\nu = |1 - z|^{2k}$ than those obtained in the general case in \cite{bottcher2008asymptotic} and \cite{bottcher2009first}. Our result may be of independent interest for the study of Toeplitz matrices. 
\begin{lemma}
\label{Pseudomodes}
Let $d\nu = w(\theta)d\theta \propto |1 - e^{i\theta}|^{2k}d\theta$ on $(0, 2\pi)$ and let $T_n(\nu)$ denote the Toeplitz matrix given by $T_n(\nu) = ((\mathcal{F}w)(i - j))_{i, j = 1, \ldots, n}$. Then, for any $C^{\infty}(\mathbb{R})$ function $\phi$ supported on $[0, \pi]$, we have $\langle \tilde{V}_{n-1} \phi, T_n(\nu) \tilde{V}_{n-1} \phi \rangle \asymp n^{-2k}$. 
\end{lemma}
\begin{proof}
For any $f \in L^1(\mathbb{R})$ we define the \textit{backward difference}:
\begin{equation*}
    \nabla_{h}[f](x) = f(x) - f(x-h)
\end{equation*}
where $h > 0$ and $x \in \mathbb{R}$. When $h = 1$, we simply write $\nabla[f](x)$. We first observe that for any $g = \sum_{m = -\infty}^{\infty} c(m) z^m$ and $k \geq 1$:
\begin{equation*}
    (1 - z)^kg = \sum_{m = -\infty}^{\infty} \nabla^k[c](m) z^m
\end{equation*}
Hence, we have:
\begin{equation*}
    \frac{1}{2\pi}\int_{0}^{2\pi} |1 - e^{i\theta}|^{2k}|g(e^{i\theta})|^2 d\theta = \sum_{m = -\infty}^{\infty} (\nabla^k[c](m))^2
\end{equation*}
Now, let $\phi$ be a $C^{\infty}(\mathbb{R})$ function supported on $[0, \pi]$ (e.g. $\phi(x) = e^{\Big(1 - \Big(\frac{2x}{\pi} - 1\Big)^2\Big)^{-1}}$ on $[0, \pi]$). Then, we have that:
\begin{align}
     \frac{1}{2\pi}\int_{0}^{2\pi} |1 - e^{i\theta}|^{2k}|V_n \phi(e^{i\theta})|^2 d\theta & = \frac{1}{n+1}\sum_{m = -\infty}^{\infty} \Big(\nabla^k_{(n+1)^{-1}}[\phi]\Big(\frac{(m + 1)\pi}{n+1}\Big)\Big)^2 \nonumber \\
     & \leq \frac{C}{n+1}\sum_{m = 0}^{n} \Big(\frac{1}{(n + 1)^k} + \frac{1}{(n + 1)^{k+1}}\Big)^2 \label{eq: Derivative Bound} \\
    & + \frac{1}{n+1}\sum_{n< m \leq n+k} \frac{1}{(n + 1)^{2k+2}} \nonumber \\
     & \leq \frac{5C}{(n + 1)^{2k}} \nonumber
\end{align}
where in \eqref{eq: Derivative Bound}, $C = \max_{0 \leq j \leq k} ||\phi^{(j)}||_{\infty} < \infty$ due to the compact support of $\phi$. 
\end{proof}
\begin{proof}[Proof of Lemma \ref{Optimal Smoothness Example}]
Let $\mathcal{P}_m = \text{span}\{1, z, \ldots, z^m\}$ denote the space of $m$-degree polynomials on $\partial \mathbb{D}$. Note that $\text{span}\{e_i\}_{i = 1}^{m} = \mathcal{P}_{m - 1}$ by the assumption that $e_i$ is a degree $i-1$ polynomial. Hence, by the Courant-Fisher theorem we may express the eigenvalues $\{\mu_i\}_{i = 1}^{\infty}$ of $T_{\nu}$ as:
\begin{align*}
    \mu_{m + 1} & = \min_{p \in \mathcal{P}_m} \frac{||p||^2_{L^2(\nu)}}{||p||^2_{K}} \\
    & =  \min_{p \in \mathcal{P}_m} \frac{||p||^2_{L^2(\nu)}}{\sum_{j = 0}^m \frac{p^2_j}{(\mathcal{F}\kappa)(j)}}
\end{align*}
where we have expressed $p \in \mathcal{P}_m$ as $p = \sum_{j = 0}^{m} p_j z^j$. The last line follows from the fact that by Bochner's theorem and the stationarity of $K(\theta_1, \theta_2) = \kappa(\theta_1 - \theta_2)$, the Fourier basis $\{1, z, \ldots, z^m\}$ is an orthogonal basis in $\mathcal{H}_K$ despite not being orthogonal in $L^2(\nu)$. We first demonstrate the lower bound. Borrowing notation from Lemma \ref{Pseudomodes} and noting that $\mathcal{F}(\kappa)$ is nonincreasing, we have that:
\begin{align*}
    \mu_{m + 1} & = \min_{p \in \mathcal{P}_m} \frac{||p||^2_{L^2(\nu)}}{\sum_{j = 0}^m \frac{p^2_j}{(\mathcal{F}\kappa)(j)}} \\
    & \geq (\mathcal{F}\kappa)(m) \cdot \min_{p \in \mathcal{P}_m} \frac{||p||^2_{L^2(\nu)}}{\sum_{j = 0}^m p^2_j} \\
    & = (\mathcal{F}\kappa)(m) \cdot \lambda_{\min}(T_{m+1}(\nu)) \\
    & \asymp (\mathcal{F}\kappa)(m)(m + 1)^{-2k}
\end{align*}
where the last line follows from the well-known asymptotic behavior of the smallest eigenvalue of $T_{n}(\nu)$; see e.g. Proposition 6.1 in \cite{babayan2022asymptotic} or Theorems 4.11 and 4.32 in \cite{bottcher2005spectral}. We now derive a matching upper bound using the test function constructed in Lemma \ref{Pseudomodes}. Indeed, let $\phi$ be a $C^{\infty}(\mathbb{R})$ function supported on $[0, \pi]$. By the previous lemma, we have that:
\begin{align}
     \mu_{m + 1} & = \min_{p \in \mathcal{P}_m} \frac{||p||^2_{L^2(\nu)}}{\sum_{j = 0}^m \frac{p^2_j}{(\mathcal{F}\kappa)(j)}} \nonumber \\
     & \leq \frac{||V_m \phi||^2_{L^2(\nu)}}{\sum_{j = 0}^m \frac{(V_m \phi)^2_j}{(\mathcal{F}\kappa)(j)}} \nonumber \\
     & = \langle \tilde{V}_m \phi, T_{m+1}(\nu) \tilde{V}_m \phi \rangle \Big(\sum_{j = 0}^m \frac{(V_m \phi)^2_j}{(\mathcal{F}\kappa)(j)}\Big)^{-1} \label{eq: Toeplitz Sub} \\
     & \asymp (m + 1)^{-2k}\Big(\frac{1}{m+1}\sum_{j = 0}^m \frac{\phi^2\Big(\frac{(j + 1)\pi}{m+1}\Big)}{(\mathcal{F}\kappa)(j)}\Big)^{-1} \label{eq: Apply Pseudomodes}
\end{align}
where \eqref{eq: Toeplitz Sub} follows since $V_m \phi$ is an analytic degree $m$ polynomial and \eqref{eq: Apply Pseudomodes} is Lemma \ref{Pseudomodes}. Then, from the condition $0 \geq \alpha_{\mathcal{F}(\kappa)}(t) > - \infty$ in Assumption \ref{Radial Kernel} (note that $\alpha_{\mathcal{F}(\kappa)} \leq 0$ follows immediately as $\mathcal{F}(\kappa)$ is nonincreasing) we have that there exists a $\delta \in (0, 1)$ such that:
\begin{equation}
\label{eq: Basic Asymp Decay}
    \frac{(\mathcal{F}\kappa)(t)}{(\mathcal{F}\kappa)(st)} \geq s^{-2\alpha_{\mathcal{F}_d \kappa}}
\end{equation}
for all $s \in (0, \delta)$ and $t \geq 1$.  Then, plugging in $s = \frac{j}{m}$ and $t = m$ into \eqref{eq: Basic Asymp Decay}, we have by the boundedness of $[\delta, 1)$ that:
\begin{align*}
    \frac{1}{m+1}\sum_{j = 0}^m \frac{\phi^2\Big(\frac{(j + 1)\pi}{m+1}\Big)}{(\mathcal{F}\kappa)(j)} & \succeq \frac{1}{(m+1)(\mathcal{F}\kappa)(m)}\sum_{j = 0}^m \phi^2\Big(\frac{(j + 1)\pi}{m+1}\Big)\Big(\frac{j}{m}\Big)^{-2\alpha_{\mathcal{F}(\kappa)}} \\
    & \asymp \frac{1}{(m+1)(\mathcal{F}\kappa)(m)}\sum_{j = 0}^m \phi^2\Big(\frac{(j + 1)\pi}{m+1}\Big)\Big(\frac{\pi(j + 1)}{m + 1}\Big)^{-2\alpha_{\mathcal{F}(\kappa)}} \\
    & = \frac{1}{(\mathcal{F}\kappa)(m)}\Big(\int_{0}^{\pi} \phi^2(t)t^{-2\alpha_{\mathcal{F}(\kappa)}}dt + o(1)\Big)
\end{align*}
Substituting the last line into \eqref{eq: Apply Pseudomodes}, we obtain:
\begin{equation*}
    \mu_{m+1} \preceq (\mathcal{F}\kappa)(m)(m + 1)^{-2k}
\end{equation*}
Now observing that for the specified measure $d\nu \propto |1 - z|^{2k}$, $s(n) \asymp n^{2k + 1}$ by Theorem 4.3 in \cite{mastroianni2000weighted}, we obtain our result. Verifying that there exists a $\psi$ satisfying Assumptions \ref{Opt Smoothness} and \ref{Growth Conditions} is straightforward: this follows immediately from the fact that $\frac{t(\mathcal{F}\kappa)(t)}{s(t)} \preceq \frac{1}{s(t)} < 1$ as $t \to \infty$ (as $t(\mathcal{F}\kappa)(t) \to 0$ by $\mathcal{H}_K \hookrightarrow L^{\infty}(\mathcal{X})$) --- hence we may choose $\psi$ such that $\psi^{-1}(t) \asymp ts^{-1}(t^{-1})(\mathcal{F}\kappa)\big(s^{-1}(t^{-1})\big)$ as $t \to 0$. The concavity of $\frac{t}{\psi(t)}$ follows immediately from the fact that both $\psi^{-1}(t)$ (and hence $\psi$) and $\frac{\psi^{-1}(t)}{t}$ (and hence $\frac{\psi(t)}{t}$) are nondecreasing. To verify that $H^{\psi} \stackrel{w}{\hookrightarrow} L^{\infty}(\mathcal{X})$, we first, abusing notation, denote by $s(t, x)$ the measure on $(0, \infty)$ given by $s(t, x) = \sum_{j \leq t} e^2_j(x)$ for $t \geq 1$ and $s(t, x) = 0$ for $t \in [0, 1)$. Then, observe that for any for any $\epsilon > 0$, we may write $f \in H^{\psi^{1 + \epsilon}}$ as $f = \sum_{i = 1}^{\infty} a_i \psi^{\frac{1+\epsilon}{2}}(\mu_i)e_i$ for $\{a_i\} \in \ell^2$. Since, as we have just shown, $\mu_i \asymp \frac{i(\mathcal{F}\kappa)(i)}{s(i)}$, we have that:
\begin{align}
    f(x) & = \sum_{i = 1}^{\infty} a_i \psi^{\frac{1+\epsilon}{2}}(\mu_i)e_i(x) \nonumber  \\
    & \asymp \sum_{i = 1}^{\infty} a_i \Big(\frac{1}{s(i)}\Big)^{\frac{1 + \epsilon}{2}} e_i(x)  \label{eq: Apply Opt Smoothness Ass} \\
    & \leq \Big(\sum_{i = 1}^{\infty} a^2_i\Big) \Big(\sum_{i = 1}^{\infty} \frac{1}{s(i)^{1 + \epsilon}}e^2_i(x)\Big) \nonumber \\
    & = ||f||^2_{\mathcal{H}^{\psi^{1 + \epsilon}}} \int_{1}^{\infty} \frac{1}{s(i)^{1 + \epsilon}} ds(i, x) \label{eq: Def of Sup} \\
    & \preceq ||f||^2_{\mathcal{H}^{\psi^{1 + \epsilon}}} \int_{1}^{\infty} \frac{1}{s(i)^{1 + \epsilon}} ds(i) \nonumber \\
    & \leq C||f||^2_{\mathcal{H}^{\psi^{1 + \epsilon}}}
\end{align}
where \eqref{eq: Apply Opt Smoothness Ass} follows from Assumption \ref{Opt Smoothness} and \eqref{eq: Def of Sup} follows from $ds(n, x) \preceq ds(n)$ by definition. Taking the supremum of the LHS, we verify the weak embedding $H^{\psi} \stackrel{w}{\hookrightarrow} L^{\infty}(\mathcal{X})$. 
\end{proof}
\begin{proof}[Proof of Lemma \ref{Mercer Differencing}]
We recall from Lemma \ref{Optimal Smoothness Example} that $s(n) \asymp n^{2k+1}$ for $L^2(\nu)$. Let $f = \sum_{m \in \mathbb{Z}} f_m z^m \in L^2(\nu)$ and set $s = \frac{1}{2\beta} + \frac{k(1 - \beta)}{\beta}$. Analogously to Lemma \ref{Pseudomodes}, for $g \in L^2(\mathbb{R})$, we define the \textit{discrete Laplacian} operator as:
\begin{equation*}
    \Delta_h[g](x) = 2g(x) - g(x + h) - g(x-h)
\end{equation*}
for $x \in \mathbb{R}$ and $h > 0$ (with a similar abbreviation of $\Delta[g](x)$ when $h = 1$). Observe that for $w(\theta) = |1 - e^{i\theta}|^{2k}$ and $g \in L^1(\partial \mathbb{D}) \cap L^2(\partial \mathbb{D})$, $\mathcal{F}[wg] = \Delta^k[\mathcal{F}g]$. It is easy to see that for $s > 0$, $H^s(\partial \mathbb{D}) \subset L^2(\partial \mathbb{D}) \subset L^2(\nu)$ (see e.g. Theorem 4.13 in \cite{bottcher2005spectral}). We then obtain that for any $g \equiv \sum_{m \in \mathbb{Z}} g_m z^m \in H^s(\mathbb{T})$:
\begin{align}
    \langle g, T_{\nu}f \rangle_{H^s(\partial\mathbb{D})} & = \sum_{m \in \mathbb{Z}} (1 + m^2)^s g_m (T_{\nu}f)_m \nonumber  \\
    & \asymp  \sum_{m \in \mathbb{Z}} m^{2s} g_m (T_{\nu}f)_m \nonumber \\
    & =  \sum_{m \in \mathbb{Z}} m^{\frac{1}{\beta} + \frac{2k(1 - \beta)}{\beta}} g_m (T_{\nu}f)_m \nonumber \\
    & = \sum_{m \in \mathbb{Z}} \frac{m g_m (T_{\nu}f)_m}{\psi^{-1}(m^{-2k-1})m^{2k + 1}} \nonumber \\
    & \asymp \sum_{m \in \mathbb{Z}} \frac{m g_m [\mathcal{F}\kappa](m) (\Delta^{k}[\mathcal{F}f])(m)}{\psi^{-1}(m^{-2k-1})m^{2k + 1}} \label{eq: Fourier Conv}  \\
    & \asymp \sum_{m \in \mathbb{Z}} g_m (\Delta^{k}[\mathcal{F}f])(m) \label{eq: Apply Opt Smoothness}  \\
    & = \langle g, f \rangle_{L^2(\nu)} \nonumber
\end{align}
where \eqref{eq: Fourier Conv} follows from the Fourier convolution formula and in \eqref{eq: Apply Opt Smoothness} we have applied Assumption \ref{Opt Smoothness} (noting $s(n) \asymp n^{2k+1}$ as in Lemma \ref{Optimal Smoothness Example}). Taking the supremum of both sides over all $g \in \mathcal{B}(H^s(\partial\mathbb{D}))$, we obtain our result.  
\end{proof}

\begin{proof}[Proof of Lemma \ref{Isoperimetric Equivalance}]
We first begin with some preliminary calculations of the relative $\mathcal{H}_K$-capacity.  We first note that since our cutoff functions considered here will have compact support in $\mathcal{X}$ and vanish at the boundary we can simply work with $\mathcal{H}_K$ norms as opposed to $\mathcal{H}_K(\mathcal{X})$ by the discussion after Definition \ref{Capacity Definition} (and hence will omit this specification here notationally). By Lemma 2.1.1 in \cite{fukushima2010dirichlet}, we can reformulate $\text{cap}_{\mathcal{X}}(E, F; \mathcal{H}_K)$ in Definition \ref{Capacity Definition} as:
\begin{equation}
\label{Reformulated Capacity}
    \text{cap}_{\mathcal{X}}(E, F; \mathcal{H}_K) = \inf\{||f||^2_{K}: f \in \mathcal{H}_K; f = 1 \hspace{1mm} \text{on} \hspace{1mm} E; f = 0 \hspace{1mm} \text{on} \hspace{1mm} \mathcal{X} \setminus  \text{int}(F); 0 \leq f \leq 1\}
\end{equation}
Let $\text{Adm}_{\mathcal{X}}(E, F; \mathcal{H}_K) = \{f \in \mathcal{H}_K: f = 1 \hspace{1mm} \text{on} \hspace{1mm} E; f = 0 \hspace{1mm} \text{on} \hspace{1mm} \mathcal{X} \setminus \text{int}(F); 0 \leq f \leq 1\}$ denote the set of admissible functions for $\text{cap}_{\mathcal{X}}(E, F; \mathcal{H}_K)$.  Since $\mathcal{X}$ contains no boundary points by assumption, it is easy to see that the sets $\text{Adm}_{\mathcal{X}}(B(x, \frac{r}{2}), B(x, r); \mathcal{H}_K)$ are radial for all $r < r_{x}$; i.e. there exists a $r_x > 0$, such that for all $0 < r < r_x$:
\begin{equation*}
    \text{Adm}_{\mathcal{X}}\Big(B\Big(x, \frac{r}{2}\Big), B(x, r); \mathcal{H}_K\Big) = \Big\{\tilde{f}(y) = f\Big(x + \frac{r_x(y - x)}{r}\Big): f \in \text{Adm}_{\mathcal{X}}\Big(B\Big(x, \frac{r_x}{2}\Big), B(x, r_x); \mathcal{H}_K\Big)\Big\}
\end{equation*}
We may choose $r_x \in \Big(0, \frac{1}{2\sqrt[d]{C_d}}\Big)$, where $C_d$ is the volume of the unit ball in $\mathbb{R}^d$. Let $\tilde{f} \in \text{Adm}_{\mathcal{X}}\Big(B\Big(x, \frac{r}{2}\Big), B(x, r); \mathcal{H}_K\Big)$. Then, $\mathcal{F}\tilde{f}(\xi) = (rr^{-1}_x)^d e^{-i\xi(1 - rr_{x}^{-1})x}\mathcal{F}f(rr_{x}^{-1}\xi)$ for some $f \in \text{Adm}_{\mathcal{X}}\Big(B\Big(x, \frac{r_x}{2}\Big), B(x, r_x); \mathcal{H}_K\Big)$. Hence, we have as $r \to 0$:
\begin{align}
    ||\tilde{f}||^2_{K} & = \int_{\mathbb{R}^d} \frac{|\mathcal{F}\tilde{f}(\xi)|^2}{\mathcal{F}_d \kappa(||\xi||)}d\xi \nonumber \\
    & = (rr^{-1}_x)^{2d}\int_{\mathbb{R}^d} \frac{|\mathcal{F}f(rr_{x}^{-1}\xi)|^2}{\mathcal{F}_d \kappa(||\xi||)}d\xi \nonumber \\
    &  = (rr^{-1}_{x})^d \int_{\mathbb{R}^d} \frac{|\mathcal{F}f(\xi)|^2}{\mathcal{F}_d \kappa(r_{x}r^{-1}||\xi||)}d\xi \label{eq: Growth-Plugins}
\end{align}
Now, observe that, uniformly in $t \geq 1$, we have:
\begin{align}
    \lim_{||\xi|| \to \infty} \frac{1}{\log ||\xi||} \cdot \log \Big(\frac{\mathcal{F}_d \kappa(t)}{\mathcal{F}_d \kappa(t||\xi||)}\Big) & = -\beta_{F_{d}\kappa}  \label{eq: Asymptotic Growth} \\
     \lim_{||\xi|| \to 0} \frac{1}{\log ||\xi||} \cdot \log \Big(\frac{\mathcal{F}_d \kappa(t)}{\mathcal{F}_d \kappa(t||\xi||)}\Big) & \leq \alpha_{\frac{1}{F_{d}\kappa}} \label{eq: Asymptotic Decay}
\end{align}
which follows from the assumption that the extension indices of $\mathcal{F}_d \kappa$ and $\frac{1}{F_{d}\kappa}$ (see \eqref{eq: Lower Extension Index} and \eqref{eq: Upper Extension Index}) are finite (namely $\infty < \alpha_{\mathcal{F}_d \kappa}, \beta_{\mathcal{F}_d \kappa} \leq 0 \leq \alpha_{\frac{1}{F_{d}\kappa}}$) and $\beta_{\mathcal{F}_d \kappa} = -\beta_{\frac{1}{\mathcal{F}_d \kappa}}$ (note the extension indices of $\mathcal{F}_d \kappa$ are nonpositive as $\mathcal{F}_d \kappa$ is nonincreasing). Then, choosing $t = r_{x}r^{-1}$ and $t = 1$ in \eqref{eq: Asymptotic Growth}, and $||\xi|| = r_{x}$ in \eqref{eq: Asymptotic Decay} we have that for any $\epsilon > 0$ (which we will choose later), there exists a $s_{1}(\epsilon) \in (0, 1)$ and $s_2(\epsilon) \in (1, \infty)$ depending only on $\epsilon$ such that:
\begin{align}
   ||\xi||^{-2\epsilon} \leq  \frac{\mathcal{F}_d \kappa(||\xi||)}{\mathcal{F}_d \kappa(r_{x}r^{-1}||\xi||)} \frac{\mathcal{F}_d \kappa(r_{x}r^{-1})}{\mathcal{F}_d \kappa(1)} & \leq ||\xi||^{2\epsilon} \hspace{3mm} \forall \hspace{1mm} ||\xi|| \in [s_{2}(\epsilon), \infty) \label{eq: Epsilon Poly Growth} \\
    \frac{\mathcal{F}_d \kappa(r^{-1})}{\mathcal{F}_d \kappa(r_{x}r^{-1})}  & \geq r_x^{2\alpha_{(F_{d}\kappa)^{-1}}} \hspace{2mm} \forall r \in (0, 1) \label{eq: Crude Lower Bound}
\end{align}
when $r_x \in (0, s_{1}(\epsilon))$ (note that in fact we can always choose $r_{x} < \frac{1}{2}$ sufficiently small so that \eqref{eq: Crude Lower Bound} holds). Substituting \eqref{eq: Epsilon Poly Growth} and \eqref{eq: Crude Lower Bound} into \eqref{eq: Growth-Plugins}, we have by the uniformity of \eqref{eq: Asymptotic Growth}-\eqref{eq: Asymptotic Decay} and the compactness of $[1, s_2(\epsilon)]$ that there exists constants $C_1, C_2 > 0$ independent of $r > 0$ (but dependent on $r_x$) such that:
\begin{align}
    ||\tilde{f}||^2_{K} & \geq C_1 r^d\Big(\int_{B(0, 1)} |\mathcal{F}f(\xi)|^2 d\xi + \frac{1}{\mathcal{F}_d \kappa(r^{-1})} \int_{\mathbb{R}^d \setminus B(0, 1)} \frac{||\xi||^{-2\epsilon}|\mathcal{F}f(\xi)|^2}{\mathcal{F}_d \kappa(||\xi||)} d\xi \Big) \label{eq: Asymptotic Capacity Lower Bound}\\
    ||\tilde{f}||^2_{K} & \leq \frac{C_2 r^d}{\mathcal{F}_d \kappa(r^{-1})} \Big(\int_{B(0, 1)}  |\mathcal{F}f(\xi)|^2 d\xi + \int_{\mathbb{R}^d \setminus B(0, 1)} \frac{||\xi||^{2\epsilon}|\mathcal{F}f(\xi)|^2}{\mathcal{F}_d \kappa(||\xi||)} d\xi \Big) \label{eq: Asymptotic Capacity Upper Bound} 
\end{align}
Let:
\begin{align*}
    A_1 & \equiv \inf_{f \in \text{Adm}_{\mathcal{X}}(B(x, \frac{r_{x}}{2}), B(x, r_{x}); \mathcal{H}_K)} \int_{\mathbb{R}^d \setminus B(0, 1)} \frac{||\xi||^{-2\epsilon}|\mathcal{F}f(\xi)|^2}{\mathcal{F}_d \kappa(||\xi||)} d\xi \ \\
    A_2 & \equiv \inf_{f \in \text{Adm}_{\mathcal{X}}(B(x, \frac{r_{x}}{2}), B(x, r_{x}); \mathcal{H}_K)} \int_{\mathbb{R}^d \setminus B(0, 1)} \frac{||\xi||^{2\epsilon}|\mathcal{F}f(\xi)|^2}{\mathcal{F}_d \kappa(||\xi||)} d\xi 
\end{align*}
We will demonstrate that both $A_1, A_2 \in (0, \infty)$ with an appropriate choice of $\epsilon > 0$. Observe that it suffices to prove $A_1 > 0$ and $A_2 < \infty$. For the first statement, we first note that $\epsilon > 0$ can always be chosen so that $\frac{t^{-2\epsilon}}{\mathcal{F}_d \kappa(t)}$ is asymptotically nondecreasing, i.e. with the choice $\epsilon < \epsilon_1 \equiv -\frac{1}{2} \lim_{t \to \infty} \log \mathcal{F}_d \kappa(t)$. Hence, we have that:
\begin{align}
    \int_{\mathbb{R}^d \setminus B(0, 1)} \frac{||\xi||^{-2\epsilon}|\mathcal{F}f(\xi)|^2}{\mathcal{F}_d \kappa(||\xi||)} d\xi & \succeq  \frac{1}{\mathcal{F}_d \kappa(1)} \int_{\mathbb{R}^d \setminus B(0, 1)} |\mathcal{F}f(\xi)|^2 d\xi \nonumber \\
    &  = \frac{1}{\mathcal{F}_d \kappa(1)}\Big(||f||^2_{L^2(\mathbb{R}^d)} - \int_{B(0, 1)} |\mathcal{F}f(\xi)|^2 d\xi\Big) \nonumber \\
    & \geq \frac{1}{\mathcal{F}_d \kappa(1)}\Big(||f||^2_{L^2(\mathbb{R}^d)} -  C_d||f||^2_{L^1(\mathbb{R}^d)}\Big) \label{eq: Upper Fourier Bound} \\
    & \geq \frac{1}{\mathcal{F}_d \kappa(1)}(2^{-d}r^d_x - C_d r^{2d}_x) \label{eq: Magnitude of Potential}
\end{align}
where $\eqref{eq: Upper Fourier Bound}$ follows from the bound $||\mathcal{F}f||_{L^{\infty}(\mathbb{R}^d)} \leq ||f||_{L^{1}(\mathbb{R}^d)}$ (recalling $C_d$ is the volume of the unit ball) and $\eqref{eq: Magnitude of Potential}$ follows from the fact that by definition, $f \in \text{Adm}_{\mathcal{X}}(B(x, \frac{r_{x}}{2}), B(x, r_{x}); \mathcal{H}_K)$ implies $f = 1$ on $B(x, \frac{r_{x}}{2})$ (and hence $||f||^2_{2} \geq 2^{-d}r^d_x$) while $0 \leq f \leq 1$ on $B(x, r_{x})$ and hence $||f||_{L^1(\mathbb{R}^d)} \leq r^d_x$. Finally $2^{-d}r^d_x - C_dr^{2d}_x > 0$ by the assumption $r_x \in \Big(0, \frac{1}{2\sqrt[d]{C_d}}\Big)$. The conclusion $A_1 > 0$ follows. 

Now, let $f^{*} \in \text{Adm}_{\mathcal{X}}(B(x, \frac{r_{x}}{2}), B(x, r_x); \mathcal{H}_K)$ be the unique potential which achieves $\text{Cap}_{\mathcal{X}}(B(x, \frac{r_{x}}{2}), B(x, r_x); \mathcal{H}_K)$ (the existence of $f^{*} \in \mathcal{H}_K$ follows from Lemma 2.1.1 in \cite{fukushima2010dirichlet}). Then, we can always choose $\epsilon > 0$, such that $$\epsilon < \epsilon_2 \equiv \frac{1}{2}\lim_{||\xi|| \to \infty}\log \mathcal{F}_d \kappa(||\xi||) - \log |\mathcal{F}f^{*}(\xi)|^2  - d$$ so that the integral in $A_2$ converges with $f = f^{*}$, and therefore $A_2 < \infty$ ($A_2 > 0$ is trivial as $A_2 \geq A_1$). Hence, choosing $\epsilon < \min\{\epsilon_1, \epsilon_2\}$ and taking the infimum over $f \in \text{Adm}_{\mathcal{X}}(B(x, \frac{r_{x}}{2}), B(x, r_{x}); \mathcal{H}_K)$ in \eqref{eq: Asymptotic Capacity Lower Bound}-\eqref{eq: Asymptotic Capacity Upper Bound}, we have:
\begin{equation}
\label{Capacity Asymptotics}
    \text{Cap}_{\mathcal{X}}\Big(B(x, \frac{r}{2}), B(x, r); \mathcal{H}_K\Big) \asymp \frac{r^d}{\mathcal{F}_d \kappa(r^{-1})}
\end{equation}
as $r \to 0$. We now note that it is sufficient to prove \eqref{eq: Weak Embedding Implication} for sufficiently small $\epsilon > 0$; the statement then follows for all $\epsilon > 0$ since $\frac{t\mathcal{F}_d \kappa(t^{\frac{1}{d}})}{s(t)}$ is nonincreasing. Now, by Lemma \ref{Interpolation Inequality}, we have:
\begin{equation*}
    \frac{||f||^2_{\psi^{1 + \epsilon}}}{||f||^2_{K}} \leq \frac{t}{\psi^{1 + \epsilon}}\Big(\frac{||f||^2_{L^2(\nu)}}{||f||^2_{K}}\Big)
\end{equation*}
for sufficiently small $\epsilon > 0$. Since $\mathcal{H}_{\psi^{1 + \epsilon}} \hookrightarrow L^{\infty}(\mathcal{X})$, this becomes:
\begin{equation}
\label{Interpolation Restate}
    ||f||_{\infty} \preceq ||f||^2_{K} \cdot \frac{t}{\psi^{1 + \epsilon}}\Big(\frac{||f||^2_{L^2(\nu)}}{||f||^2_{K}}\Big)
\end{equation}
Since for sufficiently small $\epsilon > 0$, $\frac{t}{\psi^{1 + \epsilon}}$ is concave and nondecreasing, it follows that the right-hand side of \eqref{Interpolation Restate} is jointly nondecreasing in $||f||^2_{K}$ and $||f||^2_{L^2(\nu)}$ (the latter is obvious; to see the former  write $y = \frac{||f||^2_{L^2(\nu)}}{||f||^2_{K}}$, $\tilde{\psi}(t) = \frac{t}{\psi^{1 + \epsilon}(t)}$, and observe the RHS of  \eqref{Interpolation Restate} can be expressed as $||f||^2_{L^2(\nu)} \cdot \frac{\tilde{\psi}(y)}{y}$ --- the statement follows from $\frac{\tilde{\psi}(y)}{y}$ nonincreasing as $\tilde{\psi}(t) = \frac{t}{\psi^{1 + \epsilon}(t)}$ is concave  by assumption). Let $f^{*} \in \text{Adm}_{\mathcal{X}}(B(x, \frac{r}{2}), B(x, r); \mathcal{H}_K)$ be the potential at which the infimum in \eqref{Reformulated Capacity} is realized (the existence of this unique minimizer follows again from Lemma 2.1.1 in \cite{fukushima2010dirichlet}). Since $f^{*} \in \text{Adm}_{\mathcal{X}}(B(x, \frac{r}{2}), B(x, r); \mathcal{H}_K)$, it follows by definition that:
\begin{equation*}
    ||f||^2_{L^2(\nu)} \leq \nu(B(x, r))
\end{equation*}
Moreover, noting that $||f^{*}||_{\infty} = 1$ and substituting \eqref{Capacity Asymptotics} into \eqref{Interpolation Restate}, we have:
\begin{equation}
\label{Capacity Interpolation}
    1 \preceq \frac{r^d}{\mathcal{F}_d \kappa(r^{-1})} \cdot \frac{t}{\psi^{1 + \epsilon}}\Big(\frac{\mathcal{F}_d \kappa(r^{-1})\nu(B(x, r))}{r^d}\Big)
\end{equation}
as $r \to 0$. Now we note that by the volume doubling property of $\nu$ and the $\text{RCD}^{*}(K, N)$ condition on $(\mathcal{X}, \nu)$, we have by Corollary 3.6 of \cite{ambrosio2018short} that:
\begin{equation}
\label{Heat Kernel Decay}
    p_{\mathcal{X}}(t, x, x) \asymp \frac{1}{\nu(B(x, \sqrt{t}))} \hspace{2mm} \text{as} \hspace{1mm} t \to 0
\end{equation}
where $p_{\mathcal{X}}(t, x, y)$ is the heat kernel on $(\mathcal{X}, \nu)$. Let $\tilde{s}(t, x)$ denote the measure given by:
$\tilde{s}(t, x) = \sum_{\lambda_{i} \leq t} e_i^2(x)$, where $\{\lambda_{i}\}_{i \in \mathbb{N}}$ are the eigenvalues of the weighted Laplacian $\Delta_{\nu}$ on $(\mathcal{X}, \nu)$ with corresponding eigenfunctions $\{e_i\}_{i \in \mathbb{N}}$. Now, observing that:
\begin{equation*}
    p_{\mathcal{X}}(t, x, x) = \int_{0}^{\infty} e^{-ty}d\tilde{s}(y, x)
\end{equation*}
we have by the Karamata-Tauberian theorem (e.g Theorem 2.5 in \cite{ambrosio2018short}) and \ref{Heat Kernel Decay} that:
\begin{equation}
\label{eq: Eigenmeasure Growth}
    \tilde{s}(t, x) \asymp \frac{1}{\nu(B(x, t^{-\frac{1}{2}}))}
\end{equation}
as $t \to \infty$. Now, in light of \eqref{eq: Average Volume Growth} and Theorem 4.3 in \cite{ambrosio2018short}, we have that the weighted Laplacian $\Delta_{\nu}$ on $(\mathcal{X}, \nu)$ follows Weyl-type asymptotics and $N(\lambda) = \#\{i: \lambda_i \leq \lambda\} \asymp \lambda^{\frac{d}{2}}$ as $\lambda \to \infty$. Hence, from \eqref{eq: Eigenmeasure Growth}, we have that 
\begin{align}
  s(t, x) & = \sum_{i \leq t} e^2_i(x) \nonumber \\
  & \asymp \sum_{\lambda_i \leq t^{\frac{2}{d}}} e^2_i(x) \nonumber \\ 
  & = \tilde{s}(t^{\frac{2}{d}}, x) \nonumber \\
  & \asymp \frac{1}{\nu(B(x, t^{-\frac{1}{d}}))} \label{eq: Weyl Eigenfunction Growth}  
\end{align}
Now substituting the latter and $r = t^{-\frac{1}{d}}$ (as $t \to \infty$) in \eqref{Capacity Interpolation}, we obtain:
\begin{align*}
        1 & \preceq \frac{t^{-1}}{\mathcal{F}_d \kappa(t^{\frac{1}{d}})} \cdot \frac{t}{\psi^{1 + \epsilon}}\Big(\frac{\mathcal{F}_d \kappa(t^{\frac{1}{d}})t}{s(t, x)}\Big) \\
        & \asymp \frac{1}{s(t, x)\psi^{1 + \epsilon}\Big(\frac{\mathcal{F}_d \kappa(t^{\frac{1}{d}})t}{s(t, x)}\Big)}
\end{align*}
and \eqref{eq: Weak Embedding Implication} follows by taking the supremum in $x \in \mathcal{X}$ of the right-hand side and noting that $t\psi^{1 + \epsilon}(t^{-1})$ is nonincreasing in $t > 0$ by Assumption \ref{Growth Conditions} for sufficiently small $\epsilon > 0$. \eqref{eq: Isocapacitary Equivalence} also follows directly by substituting \eqref{Capacity Asymptotics} and \eqref{eq: Weyl Eigenfunction Growth} into Assumption \ref{Opt Smoothness} again with the choice of $r = t^{-\frac{1}{d}}$ and taking the supremum.
\end{proof}

\section{Proof of Theorem \ref{Main Upper}}
\label{Upper Bound Proof}
The main effort in the proof of Theorem \ref{Main Upper} involves proving Lemma \ref{Bias-Variance Decomposition}, from which the Theorem \ref{Main Upper} easily follows; hence we will begin with the proof of Lemma \ref{Bias-Variance Decomposition}. We start with the following bound on the bias:
\begin{lemma}
\label{Bias Bound}
\begin{equation*}
    ||f^{*} - f_{\lambda}||^2_{L^2(\nu)} \leq \phi(\lambda)||f^{*}||^2_{\phi}
\end{equation*}
\end{lemma}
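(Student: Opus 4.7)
The plan is to reduce the bound to a pointwise inequality on the spectrum of the Mercer operator $T_{\nu}$ and invoke the qualification-type condition in Assumption \ref{Growth Conditions}(a).

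First I would rewrite the residual $f^{*} - f_{\lambda}$ in spectral form on $L^{2}(\nu)$. Using the identities $C_{\nu} = S_{\nu}I_{\nu}$, $T_{\nu} = I_{\nu}S_{\nu}$, and the intertwining relation $I_{\nu}(C_{\nu}+\lambda)^{-1} = (T_{\nu}+\lambda)^{-1}I_{\nu}$, the embedded regularized population solution is
\begin{equation*}
 I_{\nu}f_{\lambda} = I_{\nu}(C_{\nu}+\lambda)^{-1}S_{\nu}f^{*} = (T_{\nu}+\lambda)^{-1}T_{\nu}f^{*},
\end{equation*}
so the residual equals $f^{*} - I_{\nu}f_{\lambda} = \lambda(T_{\nu}+\lambda)^{-1}f^{*}$. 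Expanding with the Mercer basis $\{e_{j}\}$ gives
\begin{equation*}
 \|f^{*} - f_{\lambda}\|^{2}_{L^{2}(\nu)} = \sum_{j}\frac{\lambda^{2}}{(\mu_{j}+\lambda)^{2}}\,|\langle f^{*},e_{j}\rangle_{L^{2}(\nu)}|^{2},
\end{equation*}
while by definition $\|f^{*}\|_{\phi}^{2} = \sum_{j}\phi(\mu_{j})^{-1}|\langle f^{*},e_{j}\rangle_{L^{2}(\nu)}|^{2}$.

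Next I would reduce the lemma to the scalar inequality
\begin{equation*}
 \frac{\lambda^{2}}{(\mu+\lambda)^{2}}\,\phi(\mu) \;\le\; \phi(\lambda) \qquad \text{for all } \mu \ge 0,
\end{equation*}
since multiplying this by $\phi(\mu)^{-1}|\langle f^{*},e_{j}\rangle|^{2}$ and summing over $j$ with $\mu=\mu_{j}$ yields the claim. To verify the inequality I would split into two cases. If $\mu \le \lambda$, then $\phi$ is nondecreasing so $\phi(\mu) \le \phi(\lambda)$, and $\lambda^{2}/(\mu+\lambda)^{2} \le 1$, giving the bound immediately. If $\mu > \lambda$, then Assumption \ref{Growth Conditions}(a) gives that $t/\phi(t)$ is nondecreasing, so $\phi(\mu)/\mu \le \phi(\lambda)/\lambda$, i.e.\ $\phi(\mu)/\phi(\lambda) \le \mu/\lambda$. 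Then
\begin{equation*}
 \frac{\lambda^{2}\phi(\mu)}{(\mu+\lambda)^{2}\phi(\lambda)} \;\le\; \frac{\lambda\mu}{(\mu+\lambda)^{2}} \;\le\; \tfrac{1}{4}
\end{equation*}
by the AM--GM inequality $\lambda\mu \le (\lambda+\mu)^{2}/4$, which completes the case.

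I do not anticipate any serious obstacle: the argument is the standard qualification-$1$ property of Tikhonov regularization, and the only place any assumption is needed is the monotonicity of $t \mapsto t/\phi(t)$, which is precisely part of Assumption \ref{Growth Conditions}(a). The $\Delta_{2}$ hypothesis and the other growth conditions are not invoked here; they will enter later when bounding the variance term and combining bias with variance in Lemma \ref{Bias-Variance Decomposition}.
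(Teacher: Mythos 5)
Your proposal is correct and takes essentially the same route as the paper: expand $f^{*}-f_{\lambda} = \lambda(T_{\nu}+\lambda)^{-1}f^{*}$ in the Mercer basis, reduce to the scalar inequality $\lambda^{2}\phi(\mu)/(\mu+\lambda)^{2}\le\phi(\lambda)$, and verify it by splitting on $\mu\lessgtr\lambda$. Your handling of the $\mu>\lambda$ case is a touch cleaner (using only that $\phi(t)/t$ is nonincreasing together with AM--GM, which even yields the extra factor $1/4$), whereas the paper invokes the implied monotonicity of $\phi(t)/t^{2}$; this is a cosmetic difference, not a different argument.
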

\begin{proof}
Since $f \in \mathcal{H}^{\phi}$, we have that there exists a $\{a_i\}_{i = 1}^{\infty} \in \ell^2$ such that:
\begin{equation*}
    f^{*} = \sum_{i = 1}^{\infty} a_i \phi^{\frac{1}{2}}(\mu_i)e_i
\end{equation*}
Hence, 
\begin{align}
\label{eq:Error}
    f^{*} - f_{\lambda} = \sum_{i = 1}^{\infty} \frac{a_i \lambda \phi^{\frac{1}{2}}(\mu_i)}{\mu_i + \lambda} e_i
\end{align}
Therefore, by the Cauchy-Schwartz inequality:
\begin{align*}
    ||f^{*} - f_{\lambda}||^2_{L^2(\nu)} & \leq \sum_{i = 1}^{\infty} \Big(\frac{a_i \lambda \phi^{\frac{1}{2}}(\mu_i)}{\mu_i + \lambda}\Big)^2  \\
    & \leq \Big(\sup_{i}  \frac{\lambda^2 \phi(\mu_i)}{(\mu_i + \lambda)^2}\Big)\sum_{i = 1}^{\infty} a^2_i \\
    & \leq \Big(\sup_{i}  \frac{\lambda^2 \phi(\mu_i)}{(\mu_i + \lambda)^2}\Big)||f^{*}||^2_{\phi}
\end{align*}
Now, observe that for $\mu_i < \lambda$:
\begin{equation*}
    \sup_{i: \mu_i < \lambda}  \frac{\lambda^2 \phi(\mu_i)}{(\mu_i + \lambda)^2} \leq \phi(\lambda)
\end{equation*}
as $\phi$ is nondecreasing. For $\mu_i \geq \lambda$, we have that:
\begin{equation*}
    \sup_{i: \mu_i \geq \lambda}  \frac{\lambda^2 \phi(\mu_i)}{(\mu_i + \lambda)^2} = \sup_{i: \mu_i \geq \lambda}  \frac{\lambda^2 \mu_i^2 \phi(\mu_i)}{\mu^2_i(\mu_i + \lambda)^2} \leq \sup_{i: \mu_i \geq \lambda} \frac{\lambda^2 \mu_i^2 \phi(\lambda)}{\lambda^2(\mu_i + \lambda)^2} \leq \phi(\lambda)
\end{equation*}
where the second inequality follows from the fact that $\frac{\phi(t)}{t^2}$ is nonincreasing (from Assumption \ref{Growth Conditions}). Putting this together, we obtain our result. 
\end{proof}
\begin{lemma}
\label{Uniform Bias}
For sufficiently small $\epsilon > 0$:
\begin{equation*}
    ||f^{*} - f_{\lambda}||^2_{\infty} \leq \frac{\phi(\lambda)||k^{\psi^{1 + \epsilon}}||^2_{\infty}||f^{*}||^2_{\phi}}{\psi^{1+\epsilon}(\lambda)}
\end{equation*}
\end{lemma}
\begin{proof}
Let $\epsilon > 0$. We have from \eqref{eq:Error}:
\begin{align*}
    ||f^{*} - f_{\lambda}||^2_{\infty} & \leq \Big|\sum_{i = 1}^{\infty} \frac{a_i\lambda \phi(\mu_i)^{\frac{1}{2}} e_i(\cdot)}{\mu_i + \lambda} \Big|^2_{\infty} \\
    & \leq \Big(\sum_{i} \frac{a^2_i\lambda^2 \phi(\mu_i)}{(\mu_i + \lambda)^2\psi^{1 + \epsilon}(\mu_i)}\Big)\Big|\sum_{i = 1}^{\infty} \psi^{1 + \epsilon}(\mu_i) e^2_i(\cdot)\Big|_{\infty} \\
    & \leq \frac{\phi(\lambda)||k^{\psi^{1 + \epsilon}}||^2_{\infty}||f^{*}||^2_{\phi}}{\psi^{1 + \epsilon}(\lambda)}
\end{align*}
where the last line follows from the definition of $||k^{\psi^{1 + \epsilon}}||$ and the same logic as in the proof of Lemma \ref{Bias Bound}, after noting that $\frac{\phi(t)}{t^2\psi^{1 + \epsilon}(t)}$ is decreasing for $t \geq \lambda$ and $\frac{\phi(t)}{\psi^{1+ \epsilon}(t)}$ is increasing for $t < \lambda$ for sufficiently small $\epsilon > 0$ by Assumption \ref{Growth Conditions}. 
\end{proof}

We now derive a bound on the variance of our estimator $f_{\lambda} - f_{D, \lambda}$: 
\begin{lemma}
\label{Variance Bound}
For all sufficiently small $\epsilon > 0$
\begin{equation*}
    ||f_{\lambda} - f_{D, \lambda}||_{L^2(\nu)} \preceq \log(\delta^{-1})\sqrt{\frac{1152\sigma^2||k^{\psi^{1 + \epsilon}}||^2_{\infty}||f^{*}||^2_{\phi}}{n\psi^{1 + \epsilon}(\lambda)}\Big(\phi(\lambda) + \frac{1}{n}\Big) + \frac{s^{-1}(\psi(\lambda)^{-1 - \epsilon})}{n}}
\end{equation*}
with probability $1 - 3\delta$
\end{lemma}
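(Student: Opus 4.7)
The plan is to follow the classical operator-theoretic variance decomposition of $f_{D,\lambda}-f_{\lambda}$, then apply Bernstein-type concentration for $\mathcal{H}_K$-valued random variables, with the key innovation being that the effective dimension of $T_{\nu}$ is estimated via the $s$-function (Assumption \ref{Eigenfunction Growth}) rather than through an a priori eigendecay condition. From the defining identities $(C_D+\lambda)f_{D,\lambda}=g_D$ and $(C_{\nu}+\lambda)f_{\lambda}=S_{\nu}f^{*}$, I first derive
\[
f_{D,\lambda}-f_{\lambda}=(C_D+\lambda)^{-1}\bigl[(g_D-S_{\nu}f^{*})-(C_D-C_{\nu})f_{\lambda}\bigr].
\]
Using the polar identity $\|I_{\nu}h\|_{L^2(\nu)}=\|C_{\nu}^{1/2}h\|_{\mathcal{H}_K}$ and inserting $(C_{\nu}+\lambda)^{\pm 1/2}$, the task reduces to controlling the operator norm $\|C_{\nu}^{1/2}(C_D+\lambda)^{-1}(C_{\nu}+\lambda)^{1/2}\|$ together with the two $\mathcal{H}_K$-norms $\|(C_{\nu}+\lambda)^{-1/2}(g_D-S_{\nu}f^{*})\|_K$ (the noise term) and $\|(C_{\nu}+\lambda)^{-1/2}(C_{\nu}-C_D)f_{\lambda}\|_K$ (the covariance-deviation term).

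The operator-norm factor I would treat by applying a self-adjoint Bernstein inequality to $(C_{\nu}+\lambda)^{-1/2}(C_D-C_{\nu})(C_{\nu}+\lambda)^{-1/2}$, using the pointwise bound $\|(C_{\nu}+\lambda)^{-1/2}k(X,\cdot)\|_K^{2}\leq \|k^{\psi}\|^2_{\infty}/\psi(\lambda)$ that follows from the embedding $\mathcal{H}^{\psi}\hookrightarrow L^{\infty}$. On an event of probability $1-\delta$, provided $n\psi(\lambda)\succeq \|k^{\psi}\|^2_{\infty}\log(\delta^{-1})$, this operator norm is $\preceq 1$ and can be absorbed into the prefactor. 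For the noise term I would invoke a subexponential Bernstein bound for $\mathcal{H}_K$-valued sums (Caponnetto--De Vito style), where Assumption \ref{Subexponential Noise} supplies the moment control on $\epsilon_i=Y_i-f^{*}(X_i)$; the variance proxy reduces to $\sigma^2\mathcal{N}(\lambda)/n$ with $\mathcal{N}(\lambda)=\mathrm{tr}(C_{\nu}(C_{\nu}+\lambda)^{-1})$. For the covariance-deviation term I use Lemma \ref{Uniform Bias} to bound $\|f_{\lambda}-f^{*}\|_{\infty}\preceq \|k^{\psi}\|_{\infty}\|f^{*}\|_{\phi}\sqrt{\phi(\lambda)/\psi(\lambda)}$, which feeds into a variance proxy of order $\|k^{\psi}\|^2_{\infty}\|f^{*}\|_{\phi}^{2}\,\phi(\lambda)/(n\psi(\lambda))$ and an $L^{\infty}$-moment bound of order $\|k^{\psi}\|_{\infty}\|f^{*}\|_{\phi}\sqrt{\phi(\lambda)/\psi(\lambda)}\cdot\|k^{\psi}\|_{\infty}/\sqrt{\psi(\lambda)}$.

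The core new step, replacing the usual effective-dimension assumption, is the estimate $\mathcal{N}(\lambda)\preceq s^{-1}(\psi(\lambda)^{-1})$. As discussed after Assumption \ref{Opt Smoothness}, our hypotheses collectively force $\mu_i\asymp \psi^{-1}(s(i)^{-1})$. Writing $N\asymp s^{-1}(\psi(\lambda)^{-1})$ and splitting $\mathcal{N}(\lambda)=\sum_{i\leq N}\mu_i/(\mu_i+\lambda)+\sum_{i>N}\mu_i/(\mu_i+\lambda)$, the head is bounded by $N$ and the tail is controlled by $\lambda^{-1}\sum_{i>N}\psi^{-1}(s(i)^{-1})\preceq N$, where the inequality uses summability of $1/s(i)$ (Assumption \ref{Eigenfunction Growth}) together with the $\Delta_2$-regularity of $s$ and $\psi$ (Assumption \ref{Growth Conditions}). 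Combining the two Bernstein bounds with this effective-dimension estimate, and accounting for the $\log(\delta^{-1})$ factors via a union bound over the three events, yields the claimed bound with the stated $1-3\delta$ probability.

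The main obstacle is the precise bookkeeping inside the subexponential Bernstein argument so that the second-order ``boundedness'' contribution yields the $1/(n^2\psi(\lambda))$ term, the first-order contribution of the covariance-deviation piece produces $\phi(\lambda)/(n\psi(\lambda))$, and the noise piece contributes exactly $s^{-1}(\psi(\lambda)^{-1})/n$, all while maintaining the constants in the form $\|k^{\psi}\|_{\infty}^{2}\|f^{*}\|^{2}_{\phi}$ rather than through any auxiliary effective-dimension constant. A secondary subtlety is converting the tail sum $\sum_{i>N}\mu_i/\lambda$ into $s^{-1}(\psi(\lambda)^{-1})$ through the $\Delta_2$ conditions without incurring a spurious logarithmic factor, which requires invoking the condition $\sum_{n}1/s(n)<\infty$ rather than merely the regularity of $s$.
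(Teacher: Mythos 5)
Your proposal is essentially the same as the paper's: you open with the identity $f_{D,\lambda}-f_\lambda = (C_D+\lambda)^{-1}\bigl[(C_D+\lambda)f_\lambda - g_D\bigr]$, convert $L^2(\nu)$ to $\mathcal{H}_K$ via $C_\nu^{1/2}$, insert $(C_\nu+\lambda)^{\pm 1/2}$ to factor out the operator-norm term $\|(C_\nu+\lambda)^{1/2}(C_D+\lambda)^{-1}(C_\nu+\lambda)^{1/2}\|$, control that with a self-adjoint Bernstein inequality on an event of probability $1-2\delta$ (requiring $n\psi(\lambda)\succeq \|k^\psi\|_\infty^2\log(\delta^{-1})$), use the pointwise estimate $\|(C_\nu+\lambda)^{-1/2}k(X,\cdot)\|_K^2 \leq \|k^\psi\|_\infty^2/\psi(\lambda)$, bound the uniform bias via Lemma~\ref{Uniform Bias}, and deploy $\mathcal{N}(\lambda)\preceq s^{-1}(\psi(\lambda)^{-1})$ as the substitute for a direct effective-dimension assumption. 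These are precisely the ingredients in the paper.

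The one structural difference: you propose to split the remaining stochastic term into a noise piece $(C_\nu+\lambda)^{-1/2}(g_D - S_\nu f^*)$ and a covariance-deviation piece $(C_\nu+\lambda)^{-1/2}(C_\nu - C_D)f_\lambda$, applying Bernstein to each. The paper instead observes that these combine into a \emph{single} centered empirical process $\mathbb{E}_{\mathcal{D}}[h(X,\cdot)(f_\lambda(X)-Y)] - \mathbb{E}_{XY}[h(X,\cdot)(f_\lambda(X)-Y)]$ with $h(x,\cdot)=(C_\nu+\lambda)^{-1/2}k(x,\cdot)$, and applies a single Hilbert-valued Bernstein bound (Theorem~26 of \cite{fischer2020sobolev}) after splitting $f_\lambda(X)-Y = (f_\lambda(X)-f^*(X)) + (f^*(X)-Y)$ at the level of moment computation. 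Both routes work; the unified version is slightly more economical on $\delta$'s and avoids a Minkowski-type loss from combining two square roots. One more quibble: your sketch of the effective-dimension tail bound attributes the control $\lambda^{-1}\sum_{i>N}\psi^{-1}(s(i)^{-1})\preceq N$ primarily to summability of $1/s(i)$, but the mechanism the paper actually uses (in Lemma~\ref{Effective Dimension Bound}) is that the $\Delta_2$ constants of $\psi$ and $s$ combine to give a power-law lower bound $\lambda/\mu_t \succeq (t/N)^r$ with $r>1$ for $t>N$, so that the tail integral $\int_N^\infty \frac{dt}{1+C(t/N)^r}$ converges to something of order $N$; the summability condition plays only an indirect role in making such an $r>1$ available. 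Your phrasing would need that tightening to be airtight, but the strategy is sound.
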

\begin{proof}
We begin with a standard decomposition applied in the proof of Theorem 16 in \cite{fischer2020sobolev} and Theorem 7 in \cite{talwai2022sobolev}. We first observe that:
\begin{equation}
\label{eq: Basic Decomposition}
    f_{\lambda} - f_{D, \lambda} = f_{\lambda} - (C_{D} + \lambda)^{-1}g_{D} = (C_{D} + \lambda)^{-1}((C_D + \lambda)f_{\lambda} - g_{D})
\end{equation}
where $g_D = \sum_{i = 1}^n y_i k(x_i, \cdot) \in \mathcal{H}_K$. Hence, we have that:
\begin{align}
    ||f_{\lambda} - f_{D, \lambda}||_{L^2(\nu)} & = ||C^{\frac{1}{2}}_{\nu}(f_{\lambda} - f_{D, \lambda})||_{K}  \label{eq: Norm Conversion}\\
    & \leq ||C^{\frac{1}{2}}_{\nu}(C_{\nu} + \lambda)^{-\frac{1}{2}}|| ||(C_{\nu} + \lambda)^{\frac{1}{2}}(C_D + \lambda)^{-1}(C_{\nu} + \lambda)^{\frac{1}{2}}||||(C_{\nu} + \lambda)^{-\frac{1}{2}}((C_D + \lambda)f_{\lambda} - g_{D})|| \label{eq: Variance Breakdown}
\end{align}
where \eqref{eq: Norm Conversion} follows from the fact that for any $f \in \mathcal{H}_K$, $\langle I_{\nu}f, I_{\nu}f \rangle_{2} = \langle f, I^{*}_{\nu}I_{\nu}f \rangle_{K} = \langle f, C_{\nu} f \rangle_K$ and \eqref{eq: Adjoint Formula}, and \eqref{eq: Variance Breakdown} follows from \eqref{eq: Basic Decomposition} and algebraic manipulation. It is easy to see that:
\begin{equation*}
    ||C^{\frac{1}{2}}_{\nu}(C_{\nu} + \lambda)^{-\frac{1}{2}}|| \leq 1
\end{equation*}
We now analyze second factor in \eqref{eq: Variance Breakdown}. Indeed, we have that:
\begin{equation*}
    (C_{\nu} + \lambda)^{\frac{1}{2}}(C_D + \lambda)^{-1}(C_{\nu} + \lambda)^{\frac{1}{2}} = \Big(I - (C_{\nu} + \lambda)^{-\frac{1}{2}}(C_{\nu} - C_D)(C_{\nu} + \lambda)^{-\frac{1}{2}}\Big)^{-1}
\end{equation*}
by the same algebraic manipulation as in the proof of Theorem 16 in \cite{fischer2020sobolev} (for the sake of space, we avoid repeating the argument verbatim here). Hence, estimating the middle factor in \eqref{eq: Variance Breakdown} boils down to estimating the concentration of $(C_{\nu} + \lambda)^{-\frac{1}{2}}(C_{\nu} - C_D)(C_{\nu} + \lambda)^{-\frac{1}{2}}$. We first note that:
\begin{equation*}
    (C_{\nu} + \lambda)^{-\frac{1}{2}}(C_{\nu} - C_D)(C_{\nu} + \lambda)^{-\frac{1}{2}} = \mathbb{E}_{X}[h(X, \cdot) \otimes h(X, \cdot)] - \mathbb{E}_{\mathcal{D}}[h(X, \cdot) \otimes h(X, \cdot)]
\end{equation*}
where: 
\begin{equation}
\label{Define h}
    h(x, \cdot) \equiv (C_{\nu} + \lambda)^{-\frac{1}{2}}k(x, \cdot)
\end{equation}
Note, by Lemma \ref{h-norm Bound}, we have that $||h(x, \cdot)||_{K} \leq \sqrt{\frac{||k^{\psi^{1 + \epsilon}}||_{\infty}^2}{\psi^{1 + \epsilon}(\lambda)}}$ for all sufficiently small $\epsilon > 0$. Then, analogously to Lemma 17 and p.26 in \cite{fischer2020sobolev}, we may demonstrate that:
\begin{equation*}
    \|(C_{\nu} + \lambda)^{-\frac{1}{2}}(C_{\nu} - C_D)(C_{\nu} + \lambda)^{-\frac{1}{2}}\| \leq \frac{2}{3}
\end{equation*}
with probability $1 - 2\delta$, when $n \geq 8\log(\delta^{-1})||k^{\psi^{1 + \epsilon}}||^2_{\infty}g_{\lambda}\psi(\lambda)^{-1 - \epsilon}$, with $g_{\lambda} = \log\Big(2e\mathcal{N}(\lambda)\Big(1 + \frac{\lambda}{||C_{\nu}||}\Big)\Big)$, where $\mathcal{N}(\lambda) = \text{tr}(C_{\nu}(C_{\nu} + \lambda)^{-1})$. Applying a Neumann series expansion, we obtain (like in \cite{fischer2020sobolev}):
\begin{equation*}
    \|(C_{\nu} + \lambda)^{\frac{1}{2}}(C_D + \lambda)^{-1}(C_{\nu} + \lambda)^{\frac{1}{2}}\| = ||\Big(I - (C_{\nu} + \lambda)^{-\frac{1}{2}}(C_{\nu} - C_D)(C_{\nu} + \lambda)^{-\frac{1}{2}}\Big)^{-1}|| \leq 3
\end{equation*}
for $n \geq 8\log(\delta^{-1})||k^{\psi^{1 + \epsilon}}||^2_{\infty}g_{\lambda}\psi(\lambda)^{-1 - \epsilon}$. Now, it remains to bound the third factor in \eqref{eq: Variance Breakdown}. We observe that we may write:
\begin{align*}
    (C_{\nu} + \lambda)^{-\frac{1}{2}}((C_D + \lambda)f_{\lambda} - g_{D}) & = (C_{\nu} + \lambda)^{-\frac{1}{2}}((C_D - C_{\nu})f_{\lambda} +  S_{\nu}f^{*} - g_{D}) \\
    & = \mathbb{E}_{\mathcal{D}}[h(X, \cdot) (f_{\lambda}(X) - Y)] - \mathbb{E}_{XY}[h(X, \cdot) (f_{\lambda}(X) - Y)]
\end{align*}
which follow by the definition of $g_D$, $f_{\lambda}$ (see Remark \ref{Notation Remark}), $h(x, \cdot)$ in \eqref{Define h},  and the fact that $\mathbb{E}[Y | X] = f^{*}(X)$. In order to estimate this concentration, we compute absolute moments:
\begin{align}
    \mathbb{E}_{XY}[||h(X, \cdot) (f_{\lambda}(X) - Y)||_{K}^p] & \leq \mathbb{E}_{XY}[||h(X, \cdot) (f_{\lambda}(X) - f^{*}(X) + f^{*}(X) - Y)||_{K}^p] \nonumber \\
    & \leq 2^{p-1}\mathbb{E}_{XY}[||h(X, \cdot)||^{p}_{K}(|f_{\lambda}(X) - f^{*}(X)|^p + |f^{*}(X) - Y|^p)] \nonumber \\
    & \leq 2^{p-1}\mathbb{E}_{X}\Big[||h(X, \cdot)||^{p}_{K}\Big(\|f_{\lambda} - f^{*}\|_{\infty}^{p - 2}|f_{\lambda}(X) - f^{*}(X)|^2 + \frac{p!L^{p-2}\sigma^2}{2}\Big)\Big] \label{eq: Subgaussian} \\
    & \leq 2^{p-1}\Big(\phi^{\frac{p}{2}}(\lambda)\psi^{1 + \epsilon}(\lambda)\Big(\frac{||k^{\psi^{1 + \epsilon}}||_{\infty}||f^{*}||_{\phi}}{\psi^{1 + \epsilon}(\lambda)}\Big)^{p} + \frac{p!\sigma^2}{2}\Big(\frac{||k^{\psi^{1 + \epsilon}}||^2_{\infty}L^2}{\psi^{1 + \epsilon}(\lambda)}\Big)^{\frac{p-2}{2}}\text{tr}(C_{\nu}(C_{\nu} + \lambda)^{-1})\Big) \label{eq: Apply Helpers} \\
    & \preceq 2^{p-1}\Big(\phi^{\frac{p}{2}}(\lambda)\Big(\frac{||k^{\psi^{1 + \epsilon}}||_{\infty}||f^{*}||_{\phi}}{\psi^{1 + \epsilon}(\lambda)}\Big)^{p} + \frac{p!\sigma^2}{2}\Big(\frac{||k^{\psi^{1 + \epsilon}}||^2_{\infty}L^2}{\psi^{1 + \epsilon}(\lambda)}\Big)^{\frac{p-2}{2}}\text{tr}(C_{\nu}(C_{\nu} + \lambda)^{-1})\Big)  \label{eq: Lambda Decreasing} \\
   & \preceq 2^{p-1}\Big(\Big(\frac{||k^{\psi^{1 + \epsilon}}||_{\infty}||f^{*}||_{\phi}L}{\sqrt{\psi^{1 + \epsilon}(\lambda)}}\Big)^{p-2}\Big[\Big(\frac{||k^{\psi^{1 + \epsilon}}||^2_{\infty}||f^{*}||^2_{\phi}\phi(\lambda)}{\psi^{1 + \epsilon}(\lambda)}\Big) + \frac{\sigma^2 p!}{2}s^{-1}(\psi(\lambda)^{-1 - \epsilon})\Big]\Big) \label{eq: Apply Growth Rate} \\
   & \leq \frac{p!L_{\lambda}^{p-2}R^2}{2} \nonumber
\end{align}
with $L_{\lambda} = \frac{2L ||f^{*}||_{\phi} ||k^{\psi^{1 + \epsilon}}||_{\infty}}{\sqrt{\psi^{1 + \epsilon}(\lambda)}}$ and $R^2 = \frac{4||f^{*}||^2_{\phi}||k^{\psi^{1 + \epsilon}}||^2_{\infty}\phi(\lambda)}{\psi^{1 + \epsilon}(\lambda)} + 4\sigma^2 s^{-1}(\psi(\lambda)^{-1 - \epsilon})$. Here, \eqref{eq: Subgaussian} follows from Assumption \ref{Subexponential Noise}, \eqref{eq: Apply Helpers} invokes Lemmas \ref{h-norm Bound}, \ref{Bias Bound} and \ref{Uniform Bias} and the fact that $\mathbb{E}_{X}[||h(X, \cdot)||^2_K] = \text{tr}(C_{\nu}(C_{\nu} + \lambda)^{-1})$ by \eqref{Define h}, \eqref{eq: Lambda Decreasing} follows as $\psi^{1 + \epsilon}(\lambda) \to 0$ as $\lambda \to 0$, and finally \eqref{eq: Apply Growth Rate} follows from Lemma \ref{Effective Dimension Bound} and the fact that $\frac{\phi(\lambda)}{\psi^{1 + \epsilon}(\lambda)} \to 0$ as $\lambda \to 0$ for sufficiently small $\epsilon > 0$ by Assumption \ref{Growth Conditions}. Hence, applying Theorem 26 in \cite{fischer2020sobolev}, we have that, as $\lambda \to 0$:
\begin{equation*}
    ||(C_{\nu} + \lambda)^{-\frac{1}{2}}((C_D + \lambda)f_{\lambda} - g_{D})||^2_{K} \preceq \frac{32\log^2(\delta^{-1})}{n}\Big(\frac{4||f^{*}||^2_{\phi}||k^{\psi^{1 + \epsilon}}||^2_{\infty}\phi(\lambda)}{\psi^{1 + \epsilon}(\lambda)} + 4\sigma^2 s^{-1}(\psi(\lambda)^{-1 - \epsilon}) + \frac{4L^2||k^{\psi^{1 + \epsilon}}||^2_{\infty}||f^{*}||^2_{\phi}}{n\psi^{1 + \epsilon}(\lambda)}\Big)
\end{equation*}
with probability $1 - 2\delta$. Putting this all together, we have that:
\begin{equation}
    ||f_{\lambda} - f_{D, \lambda}||_{L^2(\nu)} \preceq \log(\delta^{-1})\sqrt{\frac{1152L^2||k^{\psi^{1 + \epsilon}}||^2_{\infty}||f^{*}||^2_{\phi}}{n\psi^{1 + \epsilon}(\lambda)}\Big(\phi(\lambda) + \frac{1}{n}\Big) + \frac{s^{-1}(\psi(\lambda)^{-1 - \epsilon})}{n}}
\end{equation}
with probability $1 - 3 \delta$, when $n \geq 8\log(\delta^{-1})||k^{\psi^{1 + \epsilon}}||^2_{\infty}g_{\lambda}\psi(\lambda)^{-1 - \epsilon}$. We demonstrate that this latter condition is eventually met, when the regularization program is  $\lambda_n \asymp (\frac{\phi}{\tilde{s} \circ \psi^{1 + \epsilon}})^{-1}(n^{-1})$. Indeed, we have that, as $n \to \infty$:
\begin{align*}
    \frac{8\log(\delta^{-1})||k^{\psi^{1 + \epsilon}}||^2_{\infty}g_{\lambda_n}}{n\psi^{1 + \epsilon}(\lambda_n)} & = \frac{8\log(\delta^{-1})||k^{\psi^{1 + \epsilon}}||^2_{\infty}\log\Big(2e\mathcal{N}(\lambda_n)\Big(1 + \frac{\lambda_n}{||C_{\nu}||}\Big)\Big)}{n\psi^{1 + \epsilon}(\lambda_n)} \\
    & \leq \frac{8\log(\delta^{-1})||k^{\psi^{1 + \epsilon}}||^2_{\infty}\log\Big(4e\mathcal{N}(\lambda_n)\Big)}{n\psi^{1 + \epsilon}(\lambda_n)} \\
    & \leq \frac{16\log(\delta^{-1})||k^{\psi^{1 + \epsilon}}||^2_{\infty}\log\Big(s^{-1}(\psi(\lambda_n)^{-1 - \epsilon})\Big)}{n\psi^{1 + \epsilon}(\lambda_n)}
\end{align*}
where the last line follows by Lemma \ref{Effective Dimension Bound}. Hence, it is sufficient to demonstrate that $\frac{\log\Big(s^{-1}(\psi(\lambda_n)^{-1 - \epsilon})\Big)}{n\psi^{1 + \epsilon}(\lambda_n)} \to 0$ for the regularization program. Indeed, since $\lambda_n \asymp (\frac{\phi}{\tilde{s} \circ \psi^{1 + \epsilon}})^{-1}(n^{-1})$, we have that, for sufficiently small $\epsilon > 0$:
\begin{align}
    \frac{\log\Big(s^{-1}(\psi(\lambda_n)^{-1 - \epsilon})\Big)}{n\psi^{1 + \epsilon}(\lambda_n)} & \preceq \frac{\log\Big(s^{-1}(\psi(\lambda_n)^{-1 - \epsilon})\Big)}{n\phi(\lambda_n)} \label{eq: Apply Relative Capacity} \\
    & \asymp  \frac{\log\Big(s^{-1}(\psi(\lambda_n)^{-1 - \epsilon})\Big)}{\tilde{s}(\psi^{1 + \epsilon}(\lambda_n))} \nonumber \\ 
    & = \frac{\log\Big(s^{-1}(\psi(\lambda_n)^{-1 - \epsilon})\Big)}{s^{-1}(\psi(\lambda_n)^{-1- \epsilon})} \to 0 \label{eq: Definition of Weird s}
\end{align}
where \eqref{eq: Apply Relative Capacity} follows from Assumption \ref{Growth Conditions} and \eqref{eq: Definition of Weird s} follows from the definition of $\tilde{s}$. 

\begin{proof}[Proof of Lemma \ref{Bias-Variance Decomposition}]
This follows immediately by combining Lemma \ref{Variance Bound} with Lemma \ref{Bias Bound}. 
\end{proof}
\begin{proof}[Proof of Theorem \ref{Main Upper}]
From Lemma \ref{Bias-Variance Decomposition}, we have:
\begin{align}
    ||f^{*} - f_{D, \lambda_n}||_{L^2(\nu)} & \preceq ||f^{*}||_{\phi}\sqrt{\phi(\lambda_n)} + \log(\delta^{-1})\sqrt{\frac{\sigma^2||k^{\psi^{1 + \epsilon}}||^2_{\infty}||f^{*}||^2_{\phi}}{n\psi^{1 + \epsilon}(\lambda_n)}\Big(\phi(\lambda_n) + \frac{1}{n}\Big) + \frac{s^{-1}(\psi(\lambda_n)^{-1 - \epsilon})}{n}} \nonumber \\
    & \preceq ||f^{*}||_{\phi}\sqrt{\phi(\lambda_n)} + \log(\delta^{-1})\sqrt{\frac{s^{-1}(\psi(\lambda_n)^{-1 - \epsilon})}{n}} \label{eq: To-Balance}
\end{align}
where the last line follows from the fact that $\frac{\phi(\lambda_n)}{\psi^{1 + \epsilon}(\lambda_n)} \to 0$ for sufficiently small $\epsilon > 0$ by Assumption \ref{Growth Conditions} and $n\psi^{1 + \epsilon}(\lambda_n) \succeq n\phi(\lambda_n) \asymp \tilde{s}(\psi^{1 + \epsilon}(\lambda_n)) = s^{-1}(\psi(\lambda_n)^{-1 - \epsilon}) \to \infty$ when $\lambda_n \asymp (\frac{\phi}{\tilde{s} \circ \psi^{1 + \epsilon}})^{-1}(n^{-1})$ as $n \to \infty$. The result follows immediately after substituting the latter choice of regularizer into \eqref{eq: To-Balance} (which was indeed proposed via balancing \eqref{eq: To-Balance}). 
\end{proof}
\end{proof}
\section{Proof of Theorem \ref{Lower Bound}}
\label{Lower Bound Proof}
\begin{lemma}
\label{Bernstein-Eigenvalue Formula}
For all sufficiently small $\epsilon > 0$:
\begin{equation*}
    c^2_{n-1}(\mathcal{H}_K, L^{\infty}(\mathcal{X})) \leq ||k^{\psi^{1 + \epsilon}}||^2_{\infty}\Big(\frac{t}{\psi^{1 + \epsilon}}\Big)(\mu_{n})
\end{equation*}
\end{lemma}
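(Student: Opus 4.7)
The plan is to work directly from the definition of the Bernstein width: given any subspace $Z \subset \mathcal{H}_K$ with $\dim Z \geq n$, I want to produce a single nonzero $z \in Z$ whose ratio $\|z\|_{\infty}/\|z\|_K$ is bounded by $\|k^{\psi}\|_{\infty}\sqrt{\mu_n/\psi(\mu_n)}$, and then take the supremum over $Z$. The starting point is that $\{\sqrt{\mu_i}\, e_i\}_{i \geq 1}$ is an orthonormal basis of $\mathcal{H}_K$ (via the identification $\mathcal{H}_K \cong \mathrm{ran}(T_\nu^{1/2})$), so the $\mathcal{H}_K$-orthogonal complement of $\mathrm{span}\{\sqrt{\mu_1}e_1, \ldots, \sqrt{\mu_{n-1}}e_{n-1}\}$ has codimension $n-1$ and must intersect $Z$ in a subspace of dimension at least $1$. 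Any nonzero $z$ in that intersection has an expansion $z = \sum_{i \geq n} c_i e_i$ with $\|z\|_K^2 = \sum_{i \geq n} c_i^2/\mu_i$.

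Next I would estimate $\|z\|_\infty$ by a weighted Cauchy--Schwarz split inside the eigenexpansion:
\[
    \|z\|_\infty^2 \;\leq\; \Big(\sum_{i \geq n} \frac{c_i^2}{\psi(\mu_i)}\Big)\,\sup_x \sum_{i \geq n} \psi(\mu_i)\, e_i(x)^2 \;\leq\; \|k^{\psi}\|_\infty^2 \sum_{i \geq n} \frac{c_i^2}{\psi(\mu_i)},
\]
where the second factor is controlled by the very definition of $\|k^{\psi}\|_\infty$.

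For the remaining factor, I would write $c_i^2/\psi(\mu_i) = (c_i^2/\mu_i)\cdot(\mu_i/\psi(\mu_i))$. Since Assumption \ref{Growth Conditions}(a) asserts that both $t/\phi(t)$ and $\phi(t)/\psi(t)$ are nondecreasing, their product $t/\psi(t)$ is also nondecreasing; combined with the nonincreasing ordering of the $\mu_i$, this gives $\mu_i/\psi(\mu_i) \leq \mu_n/\psi(\mu_n)$ for all $i \geq n$, hence
\[
    \sum_{i \geq n} \frac{c_i^2}{\psi(\mu_i)} \;\leq\; \frac{\mu_n}{\psi(\mu_n)} \sum_{i \geq n} \frac{c_i^2}{\mu_i} \;=\; \frac{\mu_n}{\psi(\mu_n)} \|z\|_K^2.
\]
Combining the two bounds yields $\|z\|_\infty^2/\|z\|_K^2 \leq \|k^{\psi}\|_\infty^2 (\mu_n/\psi(\mu_n))$, and passing to the supremum over $n$-dimensional $Z$ gives the claimed estimate on $b^2_{n-1}(\mathcal{H}_K, L^\infty)$.

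The main thing to be careful about is the direction of the Bernstein width inequality: because $b_{n-1}$ is a \emph{sup}\,-\emph{inf}, an upper bound only requires exhibiting one admissible $z$ per subspace, which is exactly what the codimension count supplies. The only other delicate point is the chained monotonicity $t/\psi(t) \uparrow$, which is not directly stated as a hypothesis but is immediate from the factorization $(t/\phi(t))\cdot(\phi(t)/\psi(t))$; no other structural piece of Assumptions \ref{Embedding Condition}--\ref{Growth Conditions} is needed, so the argument is essentially two clean estimates glued by a dimension count.
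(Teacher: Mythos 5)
Your argument is correct and arrives at the bound by a genuinely different route than the paper. The paper's proof is a two-liner that first invokes the Gagliardo--Nirenberg interpolation inequality (Lemma~\ref{Interpolation Inequality}, via Proposition~7 of \cite{mathe2008direct}) to get $\|f\|^2_\infty/\|f\|^2_K \leq \|k^\psi\|^2_\infty\,(t/\psi)(\|f\|^2_{L^2(\nu)}/\|f\|^2_K)$ for every $f \in \mathcal{H}_K$, then pushes the $\sup_Z\inf_{f\in Z}$ through $t/\psi$ (using monotonicity and continuity) and identifies $\sup_Z\inf_{f\in Z}\|f\|^2_{L^2(\nu)}/\|f\|^2_K = b^2_{n-1}(\mathcal{H}_K,L^2(\nu)) = \mu_n$ via Proposition~5 of \cite{mathe2008direct}. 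You instead unpack everything at the level of the eigenexpansion: you pick, inside any $Z$ of dimension $\geq n$, an element supported on the tail $\{e_i\}_{i\geq n}$ by a codimension count against $\mathrm{span}\{\sqrt{\mu_i}e_i\}_{i<n}$, apply a weighted Cauchy--Schwarz to control $\|z\|_\infty$ by $\|k^\psi\|_\infty (\sum_{i\geq n}c_i^2/\psi(\mu_i))^{1/2}$, and then pull out the largest factor $\mu_n/\psi(\mu_n)$ using only the monotonicity of $t/\psi$. The two arguments use essentially the same raw ingredients (the $\|k^\psi\|_\infty$ quantity, the ordering of eigenvalues, the growth of $t/\psi$) but assemble them differently: the paper's version is modular and reuses the interpolation lemma that appears again in Lemma~\ref{h-norm Bound}, whereas yours is self-contained and, notably, needs only the monotonicity of $t/\psi$ and not its concavity --- a strictly weaker hypothesis than what Lemma~\ref{Interpolation Inequality} requires. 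That said, concavity is already imposed in Assumption~\ref{Growth Conditions}, so the weakening does not enlarge the paper's admissible class of index functions. One small remark: the paper says $\{e_i\}$ is an orthonormal basis of $\mathcal{H}_K$, while you correctly use $\{\sqrt{\mu_i}e_i\}$ as the orthonormal system in $\mathcal{H}_K$; your version is the precise one, and the codimension count depends on it.
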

\begin{proof}
We first note that, for sufficiently small $\epsilon > 0$, $\frac{t}{\psi(t)}$ is concave by Assumption \ref{Growth Conditions}; hence, we have by the Gagliardo-Nirenberg interpolation inequality (Lemma \ref{Interpolation Inequality}), that for all $f \in \mathcal{H}_K$:
\begin{equation*}
    \frac{||f||^2_{\psi^{1 + \epsilon}}}{||f||^2_{K}} \leq \Big(\frac{t}{\psi^{1 + \epsilon}}\Big)\Big(\frac{||f||^2_{L^2(\nu)}}{||f||^2_{K}}\Big)
\end{equation*}
By the embedding condition $\mathcal{H}^{\psi^{1 + \epsilon}} \hookrightarrow L^{\infty}(\mathcal{X})$, we have that:
\begin{equation*}
    \frac{||f||^2_{\infty}}{||f||^2_{K}} \leq ||k^{\psi^{1 + \epsilon}}||^2_{\infty}\Big(\frac{t}{\psi^{1 + \epsilon}}\Big)\Big(\frac{||f||^2_{L^2(\nu)}}{||f||^2_{K}}\Big)
\end{equation*}
Hence, we have that:
\begin{align*}
    \sup_{\text{codim}(Z) < n} \inf_{f \in Z \cap \mathcal{H}_K} \frac{||f||^2_{\infty}}{||f||^2_{K}} & \leq \sup_{\text{codim}(Z) < n} \inf_{f \in Z \cap \mathcal{H}_K} ||k^{\psi^{1 + \epsilon}}||^2_{\infty}\Big(\frac{t}{\psi^{1 + \epsilon}}\Big)\Big(\frac{||f||^2_{L^2(\nu)}}{||f||^2_{K}}\Big) \\
    & = ||k^{\psi^{1 + \epsilon}}||^2_{\infty}\Big(\frac{t}{\psi^{1 + \epsilon}}\Big)\Big(\sup_{\text{codim}(Z) \geq n} \inf_{f \in Z \cap \mathcal{H}_K} \frac{||f||^2_{L^2(\nu)}}{||f||^2_{K}}\Big)
\end{align*}
where the last step follows from the fact that $\frac{t}{\psi^{1 + \epsilon}(t)}$ is nondecreasing and continuous for sufficiently small $\epsilon > 0$. Hence, by the definition of the Gelfand width \eqref{eq: Gelfand Def}, and the fact that, on the RHS, Gelfand widths coincide with the singular values ($\sqrt{\mu_n}$) of the Hilbert space embedding $\mathcal{H}_K \hookrightarrow L^2(\nu)$ (e.g. Proposition 5 in \cite{mathe2008direct}), we obtain our result. 
\end{proof}

\begin{lemma}
\label{Gelfand-Fourier}
\begin{equation*}
    c_{n}(\mathcal{H}_K, L^{\infty}(\mathcal{X})) \geq \sqrt{C_d n\mathcal{F}_d \kappa(n^{\frac{1}{d}})}
\end{equation*}
for some constant $C_d > 0$ independent of $n$. 
\end{lemma}
\begin{proof}
Let $\{z_i\}_{i = 1}^{2n} \subset \mathcal{X}$ be a set of $2n$ distinct points in $\mathcal{X}$. By the assumption that $\kappa$ is positive-definite on $\mathbb{R}^d$, we have that for any $y \in \mathbb{R}^{2n}$, we can find a unique $f \in \text{span}\{K(z_1, \cdot), \ldots, K(z_{2n})\}$ such that $f(z_i) = y_i$ $\forall y \in [2i]$. Indeed, if we let $(K_{2n})_{i, j} = k(z_i, z_j)$ denote the kernel Gram matrix on $\{z_i\}_{i = 1}^{2n}$, then:
\begin{equation*}
    f = \sum_{i = 1}^{2n} c_i k(z_i, \cdot)
\end{equation*}
where $c = K^{-1}_{2n}y$. Let $T:\ell^{2n}_2 \to \mathcal{H}_K$ denote this bijectve mapping, i.e. $(Ty)(z_i) = y_i$ for $y \in \mathbb{R}^{2n}$. Observe that:
\begin{align*}
    ||T||_{\ell^{2n}_2 \to \mathcal{H}_K} & = \max_{||y||_{2} = 1} \sqrt{\Big\langle \sum_{i = 1}^{2n} c_i k(z_i, \cdot), \sum_{i = 1}^{2n} c_i k(z_i, \cdot) \Big\rangle_K} \\
    & = \max_{||y||_{2} = 1} \sqrt{c^TK_{2n}c} \\
    & = \max_{||y||_{2} = 1} \sqrt{y^TK^{-1}_{2n}y} \\
    & = \sqrt{||K^{-1}_{2n}||}
\end{align*}
where in the penultimate line, we recall that $c = K^{-1}_{2n}y$. Finally, let $J: L^{\infty}(\mathcal{X}) \to l^{2n}_{\infty}$ denote the restriction mapping $(Jf)_{i} = f(z_i)$ for $f \in L^{\infty}(\mathcal{X})$ and $i \in [2n]$. Then, we obtain the following factorization of the imbedding $\ell^{2n}_{2} \hookrightarrow \ell^{2n}_{\infty}$:
\begin{equation*}
    \ell^{2n}_{2} \stackrel{T}{\to} \mathcal{H}_K \stackrel{I}{\hookrightarrow} L^{\infty}(\mathcal{X}) \stackrel{J}{\to} \ell^{2n}_{\infty}
\end{equation*}
where $I:\mathcal{H}_K  \hookrightarrow L^{\infty}(\mathcal{X})$ is the canonical embedding of interest. Then, by the submultiplicativity of the Gelfand numbers, we have that:
\begin{equation*}
    c_n(\ell^{2n}_{2}, \ell^{2n}_{\infty}) \leq ||T||c_n(\mathcal{H}_K, L^{\infty}(\mathcal{X}))||J|| \leq \sqrt{||K^{-1}_{2n}||}c_n(\mathcal{H}_K, L^{\infty}(\mathcal{X}))
\end{equation*}
as clearly $||J|| \leq 1$. Hence, we have that:
\begin{align}
    c_n(\mathcal{H}_K, L^{\infty}(\mathcal{X})) & \geq \frac{c_n(\ell^{2n}_{2}, \ell^{2n}_{\infty})}{\sqrt{||K^{-1}_{2n}||}} \nonumber \\
    & = \frac{d_n(\ell^{2n}_{1}, \ell^{2n}_{2})}{\sqrt{||K^{-1}_{2n}||}} \label{eq: Gelfand-Kolmogorov duality} \\
    & = \sqrt{\frac{1}{\Big(1 - \frac{n}{2n}\Big)||K^{-1}_{2n}||}} \label{eq: Stechkin} \\
    & = \sqrt{\frac{2}{||K^{-1}_{2n}||}} \label{eq: Gelfand Lower 1}
\end{align}
where in \eqref{eq: Gelfand-Kolmogorov duality} we have used the duality of Gelfand and Kolmogorov numbers, and in \eqref{eq: Stechkin} we have applied Stechkin's identity $d_n(\ell^{2n}_{1}, \ell^{2n}_{2}) = \Big(1 - \frac{n}{2n}\Big)^{-\frac{1}{2}}$ (see e.g. Lemma 4.3.19 in \cite{dung2018hyperbolic}). We now wish to upper bound $||K^{-1}_{2n}||_2$ --- we do this by using the following lower bound on the minimum eigenvalue of $K_{2n}$, given in Theorem 12.3 of \cite{wendland2004scattered}:
\begin{equation*}
    \lambda_{\min}(K_{2n}) \geq C_{d}\Big(\frac{12.76d}{q_{z}}\Big)^d \mathcal{F}_d \kappa\Big(\frac{12.76d}{q_{z}}\Big)
\end{equation*}
where $q_{z} \equiv \min_{i \neq j} ||z_i - z_j||$ and $C_d > 0$ is an absolute constant depending only the dimension $d$. We choose a maximal $2n$-set $\{z_i\}_{i = 1}^{2n}$, so that $q_z$ is the packing number $p_n(\mathcal{X})$ of the domain $\mathcal{X}$. From the standard equivalence $p_n(\mathcal{X}) \asymp \epsilon_n(\mathcal{X})$ and Assumption \ref{Domain Condition}, we have that:
\begin{equation*}
    \lambda_{\min}(K_{2n}) \geq C_{d}n\mathcal{F}_d \kappa \Big(12.76d (2n)^{\frac{1}{d}}\Big)
\end{equation*}
From the the dilation condition $|\beta_{\mathcal{F}_d \kappa}| < \infty$ in Assumption \ref{Radial Kernel}, we therefore have that:
\begin{equation*}
    \lambda_{\min}(K_{2n}) \geq C_{d}n\mathcal{F}_d \kappa(n^{\frac{1}{d}})
\end{equation*}
where again we recall that the overhead constant $C_{d}$ depends only on the ambient dimension in Assumption \ref{Domain Condition} and the growth parameter $\beta_{\mathcal{F}_d \kappa}$. Noting that $\|K^{-1}_{2n}\|_2 = \lambda_{\min}(K_{2n})^{-1}$ and plugging back into \eqref{eq: Gelfand Lower 1}, we have that:
\begin{equation*}
    d_{n}(\mathcal{H}_K, L^{\infty}(\mathcal{X})) \geq \sqrt{C_d n\mathcal{F}_d \kappa(n^{\frac{1}{d}})}
\end{equation*}
\end{proof}
\begin{corollary}
\label{Eigenvalue Lower Bound}
For all $\epsilon > 0$:
\begin{equation*}
    \psi^{-1}(s(n)^{-1}) \preceq \mu_{n} \preceq \psi^{-1}(s(n)^{-\frac{1}{1 + \epsilon}})
\end{equation*}
\end{corollary}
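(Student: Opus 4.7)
The plan is to combine the Bernstein-width estimates just proven in Lemmas \ref{Bernstein-Eigenvalue Formula} and \ref{Bernstein-Fourier}, and then convert the resulting inequality into an eigenvalue bound using the Fourier-decay condition of Assumption \ref{Opt Smoothness}. Chaining the lower bound from Lemma \ref{Bernstein-Fourier} into the upper bound from Lemma \ref{Bernstein-Eigenvalue Formula} yields
\begin{equation*}
    C_d\,\mathcal{F}_d \kappa(n^{1/d}) \;\leq\; b^2_{n-1}(\mathcal{H}_K, L^\infty) \;\leq\; \|k^\psi\|_\infty^2\,\frac{\mu_n}{\psi(\mu_n)},
\end{equation*}
so that $\mathcal{F}_d \kappa(n^{1/d}) \preceq \mu_n/\psi(\mu_n)$.

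Next, I would set $u_n = \psi^{-1}(s(n)^{-1})$, so that $\psi(u_n) = 1/s(n)$ and $u_n/\psi(u_n) = s(n)u_n$. Applying $\psi^{-1}$ to Assumption \ref{Opt Smoothness} and absorbing constants via $\Delta_2$ gives $\mathcal{F}_d\kappa(n^{1/d}) \asymp s(n)u_n = u_n/\psi(u_n)$, so the previous display becomes
\begin{equation*}
    \frac{u_n}{\psi(u_n)} \;\preceq\; \frac{\mu_n}{\psi(\mu_n)}.
\end{equation*}
By Assumption \ref{Growth Conditions}(a), $t/\psi(t) = (t/\phi(t))\cdot(\phi(t)/\psi(t))$ is a product of nondecreasing functions, so $t/\psi(t)$ is itself nondecreasing; paired with the $\Delta_2$ condition on $\psi$ (which transfers to $t/\psi(t)$ and lets one invert monotonically up to constants), this delivers
\begin{equation*}
    \mu_n \;\succeq\; u_n \;=\; \psi^{-1}(s(n)^{-1}),
\end{equation*}
which is precisely the lower estimate on the Mercer spectrum fed into the minimax lower bound of Theorem \ref{Lower Bound}.

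The main obstacle is the matching inequality $\mu_n \preceq \psi^{-1}(s(n)^{-1})$ that closes the $\asymp$: the naive substitution $b_{n-1}(\mathcal{H}_K, L^\infty) \geq \sqrt{\mu_n}$ into Lemma \ref{Bernstein-Eigenvalue Formula} only recovers the vacuous $\psi(\mu_n) \leq \|k^\psi\|_\infty^2$, so a fundamentally different lower estimate on $b_{n-1}(\mathcal{H}_K, L^\infty)$ is needed. The route I would take is to certify the sharpness of Lemma \ref{Bernstein-Eigenvalue Formula} via Assumption \ref{Eigenfunction Growth}: a near-maximizer $f^\star = \sum_{i\leq n} c_i^\star e_i$ of the Lebesgue function satisfies $\|f^\star\|_\infty \asymp \sqrt{s(n)}$, $\|f^\star\|_{L^2(\nu)}=1$, and $\|f^\star\|_{\mathcal{H}_K}\leq 1/\sqrt{\mu_n}$, yielding the single-element ratio $\|f^\star\|_\infty/\|f^\star\|_{\mathcal{H}_K} \succeq \sqrt{s(n)\mu_n}$. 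Promoting this pointwise estimate to a Bernstein-width inequality $b^2_{n-1}(\mathcal{H}_K, L^\infty) \succeq s(n)\mu_n$ over a genuine $n$-dimensional subspace (a small perturbation of $E_n$, exploiting the $\Delta_2$ growth of $s$) is the delicate step; once achieved, combining with the optimality of $\psi$ in Assumption \ref{Opt Smoothness} (read as the sharpness of the Fourier bound) closes the loop via $s(n)\mu_n \preceq \mathcal{F}_d\kappa(n^{1/d}) \asymp s(n)\psi^{-1}(s(n)^{-1})$.
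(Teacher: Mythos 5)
Your argument for the lower estimate $\mu_n \succeq \psi^{-1}(s(n)^{-1})$ is essentially the paper's: chain Lemmas~\ref{Bernstein-Eigenvalue Formula} and~\ref{Bernstein-Fourier}, rewrite Assumption~\ref{Opt Smoothness} as $\mathcal{F}_d\kappa(t^{1/d}) \asymp s(t)\psi^{-1}(s(t)^{-1}) = (t/\psi)(\psi^{-1}(s(t)^{-1}))$, and invert the nondecreasing map $t/\psi$. That half is fine.

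For the upper estimate $\mu_n \preceq \psi^{-1}(s(n)^{-1})$, however, you take a genuinely different route from the paper, and it has two gaps. First, the ``delicate step'' you flag is a real obstacle, not just a technicality: producing one element $f^\star$ in the span of $\{e_1,\dots,e_n\}$ with $\|f^\star\|_\infty/\|f^\star\|_K \succeq \sqrt{s(n)\mu_n}$ does not give a lower bound on $b_{n-1}(\mathcal{H}_K, L^\infty)$, because the Bernstein width is a sup over subspaces of an \emph{infimum} over that subspace; the definition~\eqref{eq: Bernstein Def} requires the ratio to be large for \emph{every} nonzero element of some $n$-dimensional subspace, and it is not clear a small perturbation of $E_n$ achieves this uniformly. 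Second, even granting that step, your closing chain is off: you write $s(n)\mu_n \preceq \mathcal{F}_d\kappa(n^{1/d})$, but Lemma~\ref{Bernstein-Fourier} only gives $b_{n-1}^2(\mathcal{H}_K,L^\infty) \succeq \mathcal{F}_d\kappa(n^{1/d})$, the wrong direction; Assumption~\ref{Opt Smoothness} is a statement about $\psi$ and $\mathcal{F}_d\kappa$ and does not certify that the Fourier lower bound on $b_{n-1}^2$ is sharp. (The combination that would actually close your loop is $s(n)\mu_n \preceq b_{n-1}^2(\mathcal{H}_K,L^\infty) \preceq \|k^\psi\|_\infty^2\,\mu_n/\psi(\mu_n)$ using Lemma~\ref{Bernstein-Eigenvalue Formula}, giving $\psi(\mu_n)\preceq s(n)^{-1}$ directly — but this still hinges on the unresolved delicate step.)

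The paper sidesteps Bernstein widths entirely for the upper estimate. From Assumption~\ref{Eigenfunction Growth}, pick a unit $c\in\ell_2^n$ and $x^\star$ (with all $c_ie_i(x^\star)\geq 0$, achievable WLOG) so that $\sum_{i\le n} c_ie_i(x^\star) \succeq \alpha\sqrt{s(n)}$. Since $\psi$ is nondecreasing and $\mu_i\geq\mu_n$ for $i\le n$,
\begin{equation*}
\alpha\sqrt{\psi(\mu_n)s(n)} \;\preceq\; \sqrt{\psi(\mu_n)}\sum_{i\le n} c_i e_i(x^\star) \;\le\; \sum_{i\le n} \sqrt{\psi(\mu_i)}\,c_i e_i(x^\star) \;\le\; \Bigl(\sum_{i}\psi(\mu_i)e_i^2(x^\star)\Bigr)^{1/2} \;\le\; \|k^\psi\|_\infty,
\end{equation*}
where the last two steps are Cauchy--Schwarz and the embedding $\mathcal{H}^\psi\hookrightarrow L^\infty$. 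Rearranging gives $\psi(\mu_n)\preceq s(n)^{-1}$, i.e.\ $\mu_n\preceq\psi^{-1}(s(n)^{-1})$, directly. I would replace your subspace-promotion attempt with this Cauchy--Schwarz argument.
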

\begin{proof}
From Assumption \ref{Eigenfunction Growth}, for any $\alpha \in (0, 1)$, we can choose a $c \in l^n_{2}$ with $\sum_{i = 1}^n c^2_i = 1$ and $x^{*} \in \mathcal{X}$, such that:
\begin{equation*}
    \sum_{i = 1}^n c_i e_i(x^{*}) \succeq \alpha \sqrt{s(n)}
\end{equation*}
Hence, noting that $\{c_i\}_{i = 1}^n$ can always be chosen such that $c_i e_i(x^{*}) \geq 0$, for all $i \in [n]$, we have that that:
\begin{align*}
    \alpha \sqrt{\psi^{1 + \epsilon}(\mu_n)s(n)} & \preceq \sum_{i = 1}^n \sqrt{\psi^{1 + \epsilon}(\mu_i)}c_i e_i(x^{*}) \\
    & \leq ||k^{\psi^{1 + \epsilon}}||_{\infty}
\end{align*}
by Cauchy-Schwartz. Observing that $||k^{\psi^{1 + \epsilon}}||_{\infty} < \infty$ since $H^{\psi^{1 + \epsilon}} \hookrightarrow L^{\infty}(\mathcal{X})$ by Assumption \ref{Embedding Condition} and that $\alpha$ was chosen arbitrarily, we obtain the upper estimate:
\begin{equation*}
    \mu_n \preceq \psi^{-1}(s(n)^{-\frac{1}{1 + \epsilon}})
\end{equation*}
where we can factor out any constants in the argument of $\psi^{-1}$ by the $\Delta_2$ condition in Assumption \ref{Growth Conditions}. For the lower estimate, we apply Lemma \ref{Bernstein-Eigenvalue Formula} and Lemma \ref{Gelfand-Fourier} to obtain:
\begin{equation}
\label{eq: Bernstein Chain}
    \frac{t}{\psi^{1 + \epsilon}}\Big(\mu_n\Big) \succeq c^2_{n-1}(\mathcal{H}_K, L^{\infty}(\mathcal{X})) \succeq n\mathcal{F}_d k(n^{\frac{1}{d}})
\end{equation}
Now, from Assumption \ref{Opt Smoothness}, we have that:
\begin{align*}
    \psi^{-1}(s(t)^{-1})  & = \Big(\frac{t}{\psi}\Big)^{-1}\Big(\Big(\frac{t}{\psi}\Big)(\psi^{-1}(s(t)^{-1}))\Big) \\
    & = \Big(\frac{t}{\psi}\Big)^{-1}\Big(\frac{\psi^{-1}(s(t)^{-1})}{s(t)^{-1}}\Big) \\
    & \asymp \Big(\frac{t}{\psi}\Big)^{-1}\Big(\frac{s(t)t\mathcal{F}_d k(t^{\frac{1}{d}})}{s(t)}\Big)\\
    & = \Big(\frac{t}{\psi}\Big)^{-1}(t\mathcal{F}_d k(t^{\frac{1}{d}})) \\
    & \preceq \Big(\frac{t}{\psi^{1 + \epsilon}}\Big)^{-1}(t\mathcal{F}_d k(t^{\frac{1}{d}}))
\end{align*}
from \eqref{eq: Bernstein Chain} and the fact that $t\mathcal{F}_d k(t^{\frac{1}{d}}) \to 0$ as $ t \to \infty$ by the assumed compactness of the imbedding $\mathcal{H}_K \stackrel{c}{\hookrightarrow} L^{\infty}(\mathcal{X})$. Hence, combining this with \eqref{eq: Bernstein Chain}, we have obtain the lower estimate:
\begin{equation*}
    \mu_n \succeq \psi^{-1}(s(n)^{-1})
\end{equation*}
\end{proof}

\begin{remark}
\label{Remark on Dilations}
Observe, that in light of the proof of Lemma \ref{Gelfand-Fourier}, Corollary \ref{Eigenvalue Lower Bound} would still hold if in Assumption \ref{Opt Smoothness}, $\mathcal{F}_d \kappa(t^{\frac{1}{d}})$ was replaced by $\mathcal{F}_d \kappa\Big(12.76d (2t)^{\frac{1}{d}}\Big)$. However, in order to streamline the analysis, we have chosen the simpler assumption, as also discussed in section \ref{Ridge Assumptions}. 
\end{remark}
\begin{proof}[Proof of Theorem \ref{Lower Bound}]
We follow the general outline of the proof of Theorem 2 in \cite{fischer2020sobolev} that was earlier utilized in \cite{caponnetto2007optimal} and \cite{blanchard2018optimal}. 
Our goal is to construct a sequence of probability measures $\{P_j\}_{j = 1}^{M_{\epsilon}}$ on $\mathcal{X} \times \mathbb{R}$ with $P_j|_{\mathcal{X}} = \nu$ (i.e. the measures share the same marginal on $\mathcal{X}$) that are \textit{hard} to learn, i.e. their regression functions $f_j$ satisfy the source conditions but are sufficiently ``well spread out'' in $L^2(\nu)$ so that it is sufficiently hard to distinguish between these functions given only the sample data. We make these ideas more precise in the subsequent analysis. Our candidate regression functions take the following form --- for some fixed $\epsilon \in (0, 1)$ and $m \in \mathbb{N}$, we consider the element:
\begin{equation*}
    f_{\omega} = 2\sqrt{\frac{8\epsilon}{m}}\sum_{i = 1}^m \omega_i e_{i + m}
\end{equation*}
where $\omega \in \{0, 1\}^m$ is some binary string. Since the sum is finite, we have that $f \in \mathcal{H}_K \subset L^{\infty}(\mathcal{X}) \cap \mathcal{H}^{\phi}$. We will demonstrate that $||f||_{\phi} \leq B_{\phi}$ and $||f||_{\infty} \leq B_{\infty}$ for sufficiently large choices of $m$. Indeed, we have that, for any $\delta > 0$ (which we will fix soon):
\begin{align*}
    ||f_{\omega}||^2_{\phi} & = \frac{32\epsilon}{m}\sum_{i = 1}^m \frac{\omega^2_i}{\phi(\mu_{i + m})} \leq \frac{32\epsilon}{\phi(\mu_{2m})} \\
    ||f_{\omega}||^2_{\infty} & \leq ||k^{\psi^{1 + \delta}}||_{\infty}^2||f_{\omega}||^2_{\psi^{1 + \delta}} = \frac{32||k^{\psi^{1 + \delta}}||_{\infty}^2\epsilon}{m}\sum_{i = 1}^m \frac{\omega^2_i}{\psi^{1 + \delta}(\mu_{i + m})} \leq \frac{32||k^{\psi^{1 + \delta}}||_{\infty}^2\epsilon}{\psi^{1 + \delta}(\mu_{2m})}
\end{align*}
Now, by Corollary \ref{Eigenvalue Lower Bound}, we have that: 
\begin{align*}
    ||f_{\omega}||^2_{\phi} & \leq \frac{32\epsilon}{\phi(C_d\psi^{-1}(s(m)^{-1}))} \\
    ||f_{\omega}||^2_{\infty} & \leq \frac{64||k^{\psi^{1 + \delta}}||_{\infty}^2\epsilon}{C_{d}s(m)^{-1 - \delta}}
\end{align*}
where $C_d > 0$ depends only $d$ (and the constants in the $\Delta_2$ condition on $\phi, \psi$, and $s$). Hence, by the $\Delta_2$ condition on $\phi$ and $\psi$, we have that $||f||^2_{\phi} \leq B^2_{\phi}$ and $||f||^2_{\infty} \leq B^2_{\infty}$, when:
\begin{equation*}
    m \leq s^{-1}\Big(\min\Big\{\Big(\frac{C_d B^2_{\infty}}{64||k^{\psi^{1 + \delta}}||_{\infty}^2 \epsilon}\Big)^{\frac{1}{1 + \delta}}, \Big(\psi\Big(C_d\phi^{-1}\Big(\frac{32 \epsilon}{B^2_{\phi}}\Big)\Big)\Big)^{-1}\Big\}\Big)
\end{equation*}
(where we have redefined the constant $C_d$, as we will throughout this proof). By Assumption \ref{Growth Conditions}, we have that $\frac{\phi(t)}{\psi(t)} \to 0$ as $t \to 0$, and thus if we fix a sufficiently small $\delta > 0$ \Big(e.g. $\delta < \lim_{t \to 0} \log \frac{\phi(t)}{\psi(t)} / \lim_{t \to 0} \log \psi(t)$\Big), we have $\frac{\phi(t)}{\psi^{1 + \delta}(t)} \to 0$ as $t \to 0$. Hence, for sufficiently small $t > 0$, we have that $\frac{\psi(\phi^{-1}(t))}{t^{\frac{1}{1 + \delta}}} = \frac{\psi(\phi^{-1}(t))}{\phi^{\frac{1}{1 + \delta}}(\phi^{-1}(t))} \geq 1$ for this fixed choice of $\delta > 0$. Hence, we may choose $\epsilon_1 \in (0, 1)$, such that for $\epsilon \in (0, \epsilon_1]$, we have $||f||^2_{\phi} \leq B^2_{\phi}$ and $||f||^2_{\infty} \leq B^2_{\infty}$, when:
\begin{equation*}
    m \leq U_{d, \infty, \phi}s^{-1}\Big(\frac{1}{\psi(\phi^{-1}(\epsilon))}\Big) = U_{d, \infty, \phi}\tilde{s}(\psi(\phi^{-1}(\epsilon)))
\end{equation*}
where we have recalled the definition of $\tilde{s}(t) = s^{-1}\Big(\frac{1}{t}\Big)$ and $U_{d, \infty, \phi} > 0$ is a constant depending only on $d, B_{\infty}, B_{\phi}$ that we have again extracted via the $\Delta_2$ conditions on $s$, $\psi$ and $\phi$. We can ensure that there is an $m \geq 1$ satisfying this bound by choosing $\epsilon \leq \epsilon_2 = \min \{\epsilon_1, \phi(\psi^{-1}(s(U_{d, \infty, \phi}^{-1})^{-1}))\}$. Moreover, we have for any two binary strings $\omega^{(1)}, \omega^{(2)} \in \{0, 1\}^m$:
\begin{equation*}
    ||f_{\omega^{(1)}} - f_{\omega^{(2)}}||^2_{L^2(\nu)} = \frac{32\epsilon}{m}\sum_{i = 1}^m (\omega^{(1)}_{i} - \omega^{(2)}_{i})^2 \leq 32\epsilon
\end{equation*}
Now, observe that if we choose $\epsilon \leq \epsilon_3 \equiv \min\{\epsilon_2, \phi(\psi^{-1}(s(9U_{d, \infty, \phi}^{-1})^{-1}))\}$, and choose $m_{\epsilon} = \lfloor U_{d, \infty, \phi}\tilde{s}(\psi(\phi^{-1}(\epsilon))) \rfloor$ like before, we have $m_{\epsilon} \geq 9$. Then, by the Gilbert-Varshamov bound (see e.g. Lemma 24 in \cite{fischer2020sobolev}), there exists a $M_{\epsilon} \geq 2^{\frac{m_{\epsilon}}{8}}$, and some binary strings $\omega^{(1)}, \omega^{(2)}, \ldots, \omega^{(M_{\epsilon})}$, such that:
\begin{equation*}
    \sum_{i = 1}^{m_{\epsilon}} (\omega^{(k)}_i - \omega^{(l)}_i)^2 \geq \frac{m_{\epsilon}}{8}
\end{equation*}
for all $k, l \in [M_{\epsilon}]$ with $k \neq l$. Therefore, we have:
\begin{equation}
\label{eq: Separation}
    ||f_{\omega^{(k)}} - f_{\omega^{(l)}}||^2_{L^2(\nu)} \geq 4\epsilon
\end{equation}
for $k, l \in [M_{\epsilon}]$. We now construct measures $\{P_{j}\}_{j = 1}^{M_{\epsilon}}$ on $\mathcal{X} \times \mathcal{Y}$ such that $P_j|_{\mathcal{X}} = \nu = P_X$, with $\mathbb{E}[Y | X] = f_{\omega^{(j)}}(X) \equiv f_j(X)$ and $Y - \mathbb{E}[Y | X] \sim \mathcal{N}(0, \bar{\sigma}^2)$ for $(X, Y) \sim P_j$ and all $j \in [M_{\epsilon}]$ (where $\bar{\sigma} = \max\{\sigma, L\}$). The measures $P_j$ satisfy Assumption \ref{Subexponential Noise} by Lemma 21 in \cite{fischer2020sobolev}.  Letting $P_0$ be such that $f_0 = 0$. Then, we have that:
\begin{equation*}
    \frac{1}{M_{\epsilon}}\sum_{i = 1}^{M_{\epsilon}} \text{KL}(P^n_i || P^n_0) = \frac{n}{2\sigma^2 M_{\epsilon}}\sum_{i = 1}^{M_{\epsilon}} ||f_i - f_0||^2_{L^2(\nu)} \leq \frac{16n\epsilon}{\sigma^2} \equiv \alpha^{*}
\end{equation*}
Thus, for any measurable function $\Theta: (\mathcal{X} \times \mathbb{R})^n \to \{0, 1, \ldots, M_{\epsilon}\}$, we have by Theorem 20 in \cite{fischer2020sobolev} that:
\begin{equation}
\label{eq: KL Lower Bound}
    \max_{j \in [M_{\epsilon}]} P_j(\Theta(D) \neq j) \geq \frac{\sqrt{M_{\epsilon}}}{1 + \sqrt{M_{\epsilon}}}\Big(1 - \frac{48n\epsilon}{\sigma^2\log M_{\epsilon}} - \frac{1}{2\log M_{\epsilon}}\Big)
\end{equation}
Note that $M_{\epsilon} \geq 2^{\frac{m_{\epsilon}}{8}} \geq 2^{\frac{U_{d, \infty, \phi}\tilde{s}(\psi(\phi^{-1}(\epsilon)))}{9}}$ using the definition of $m_{\epsilon}$ and the fact that $m_{\epsilon} \geq 9$. Substituting this into \eqref{eq: KL Lower Bound}, we obtain:
\begin{equation*}
      \max_{j \in [M_{\epsilon}]} P_j(\Theta(D) \neq j) \geq \frac{\sqrt{M_{\epsilon}}}{1 + \sqrt{M_{\epsilon}}}\Big(1 - \frac{432n\epsilon}{\tilde{s}(\psi(\phi^{-1}(\epsilon))) \cdot U_{d, \infty, \phi}\sigma^2\log 2} - \frac{1}{2\log M_{\epsilon}}\Big)  
\end{equation*}
Now, we choose $\epsilon_n = \tau \phi\Big((\frac{\phi}{\tilde{s} \circ \psi})^{-1}(n^{-1})\Big)$ where $\tau < 1$ has been chosen so that $\epsilon_n \leq \epsilon_3$.  Define:
\begin{equation*}
    \Theta(D) \equiv \text{arg} \min_{j \in [M_{\epsilon_n}]} ||f_{D} - f_j||_{L^2(\nu)}
\end{equation*}
Then for any $j \neq \Theta(D)$, we have by \eqref{eq: Separation}:
\begin{equation*}
    2\sqrt{\epsilon_n} \leq ||f_{\Theta(D)} - f_j||_{L^2(\nu)} \leq ||f_D - f_{\Theta(D)}||_{L^2(\nu)} + ||f_D - f_{j}||_{L^2(\nu)} \leq 2||f_D - f_{j}||_{L^2(\nu)}
\end{equation*}
Thus, we have:
\begin{align*}
    \max_{j \in [M_{\epsilon_n}]} P_j(||f_D - f_{j}||_{L^2(\nu)} \geq \sqrt{\epsilon_n}) & \geq  \max_{j \in [M_{\epsilon_n}]} P_j(\Theta(D) \neq j) \\
    & \geq  \frac{\sqrt{M_{\epsilon_n}}}{1 + \sqrt{M_{\epsilon_n}}}\Big(1 - \frac{432n\epsilon_n}{\tilde{s}(\psi(\phi^{-1}(\epsilon_n))) \cdot U_{d, \infty, \phi}\sigma^2\log 2} - \frac{1}{2\log M_{\epsilon_n}}\Big)  \\
    & \geq \frac{\sqrt{M_{\epsilon_n}}}{1 + \sqrt{M_{\epsilon_n}}}\Big(1 - C_{\tau} - \frac{1}{2\log M_{\epsilon_n}}\Big)
\end{align*}
after substituting the choice of $\epsilon_n$ and letting $C_{\tau} = \frac{432\tau K^{u}}{U_{d, \infty, \phi}\sigma^2\log 2}$ (where $u = \log \tau$ and $K = K(\psi, s, \phi) > 1$ is a constant depending only on $D^1_{\psi}, D^2_{\psi}, D^1_{\phi}, D^2_{\phi}, D^1_{s}, D^2_{s} > 1$). Observing that $M_{\epsilon_n} \to \infty$ as $\epsilon_n \to 0$ (as $n \to \infty$), we obtain our result. 
\end{proof}

\section{Relaxing Radiality}
\label{Relaxing Radiality}
 In this section, we prove Lemma \ref{Alternative Spectral Estimates}. 

\begin{proof}[Proof of Lemma \ref{Alternative Spectral Estimates}]
The upper bound follows analogously to Corollary \ref{Eigenvalue Lower Bound}. Without loss of generality, suppose $0 \in \mathcal{X}$ (this can always be achieved via translation). By assumption, we have that for $n \geq (2C_1)^d$, $\epsilon_n((1 - C_1 n^{-\frac{1}{d}})\bar{\mathcal{X}}) = (1 - C_1 n^{-\frac{1}{d}}) \epsilon_n(\bar{\mathcal{X}}) \geq \frac{C_1 n^{-\frac{1}{d}}}{2}$. Hence, we can find $n$ points $\{x_i\}_{i = 1}^{n}$ such that the annuli in the family $\mathcal{A} = \Big\{B\Big(x_i, \frac{C_1 n^{-\frac{1}{d}}}{4}\Big) \setminus B\Big(x_i, \frac{C_1 n^{-\frac{1}{d}}}{8}\Big): i = 1, \ldots, n\Big\}$ are disjoint and contained in $\mathcal{X}$ (we can choose these points in $(1 - C_1 n^{-\frac{1}{d}})\mathcal{X}$; containment in $\mathcal{X}$ is then guaranteed). Then, let $\phi_i \in \mathcal{H}_K$ be the cutoff potential for the annulus $A_i \in \mathcal{A}$, i.e the unique minimizer such that $||\phi_i||^2_{K} = \text{Cap}_{\mathcal{X}}(A_i; \mathcal{H}_K) = \text{Cap}_{\mathcal{X}}\Big(B\Big(x_i, \frac{C_1 n^{-\frac{1}{d}}}{8}\Big), B\Big(x_i, \frac{C_1 n^{-\frac{1}{d}}}{4}\Big); \mathcal{H}_K\Big)$ (the existence of this minimizer again follows from Lemma 2.1.1 in \cite{fukushima2010dirichlet}). Observe that for any $i \in [n]$, we have that, as $n \to \infty$:
\begin{align}
    \frac{||\phi_i||^2_{L^2(\nu)}}{||\phi_i||^2_{K}} & \geq \frac{\nu\Big(B\Big(x_i, \frac{C_1 n^{-\frac{1}{d}}}{8}\Big)\Big)}{\text{Cap}_{\mathcal{X}}(A_i; \mathcal{H}_K)} \label{eq: L^2 lower bound}\\
    & \asymp \frac{\nu(B(x_i, n^{-\frac{1}{d}}))}{\text{Cap}_{\mathcal{X}}(A_i; \mathcal{H}_K)} \nonumber \\
    & \asymp \nu(B(x_i, n^{-\frac{1}{d}})) \cdot \inf_{x \in \mathcal{X}} \frac{\psi^{-1}(\nu(B(x, n^{-\frac{1}{d}})))}{\nu(B(x, n^{-\frac{1}{d}}))} \label{eq: Apply Isocapacitary Condition} \\
    & \succeq \inf_{x \in \mathcal{X}} \psi^{-1}(\nu(B(x, n^{-\frac{1}{d}}))) \nonumber \\
    & \asymp \psi^{-1}(s(n)^{-1}) \label{eq: Eigenfunction-Volume} 
\end{align}
where \eqref{eq: L^2 lower bound} follows from the fact that $\phi_i = 1$ on $B\Big(x_i, \frac{n^{-\frac{1}{d}}}{8}\Big)$ by definition, \eqref{eq: Apply Isocapacitary Condition} follows from \eqref{eq: Isocapacitary Condition}, and finally \eqref{eq: Eigenfunction-Volume} follows from \eqref{eq: Weyl Eigenfunction Growth}. Then, consider the $n$-dimensional subspace $\text{span}(\{\phi_i\}_{i \in [n]})$, and note by the disjointness of the $A_i$ and the locality of $\langle \cdot, \cdot \rangle_{K}$, we have that for each $\phi \in \text{span}(\{\phi_i\}_{i \in [n]})$:
\begin{equation*}
    \frac{||\phi||^2_{L^2(\nu)}}{||\phi||^2_{K}} \succeq \psi^{-1}(s(n)^{-1})
\end{equation*}
Applying the Courant-Fisher minimax principle, we thus have:
\begin{equation*}
    \mu_n \succeq \psi^{-1}(s(n)^{-1})
\end{equation*}
\end{proof}

\section{Auxiliary Results}
\begin{lemma}
\label{Interpolation Inequality}
Suppose $\frac{t}{\psi(t)}$ is concave. Then, if Assumption \ref{Embedding Condition} holds, we have for all $f \in \mathcal{H}_K$ and sufficiently small $\epsilon > 0$:
\begin{equation*}
    \frac{||f||^2_{\psi^{1 + \epsilon}}}{||f||^2_{K}} \leq \frac{t}{\psi^{1 + \epsilon}}\Big(\frac{||f||^2_{L^2(\nu)}}{||f||^2_{K}}\Big)
\end{equation*}
\end{lemma}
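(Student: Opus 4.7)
The plan is to reduce the inequality to a clean application of Jensen's inequality for the concave function $g(t) = t/\psi(t)$, after expanding $f$ in the eigenbasis $\{e_i\}_{i=1}^{\infty}$ of $T_\nu$.

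First, since $f \in \mathcal{H}_K \hookrightarrow \mathcal{H}^\psi \hookrightarrow L^2(\nu)$ by Assumption \ref{Embedding Condition}, I may write $f = \sum_i \langle f, e_i\rangle_{L^2(\nu)} e_i$ and, setting $c_i := \langle f, e_i\rangle_{L^2(\nu)}^2$, all three relevant norms admit the spectral expressions
\begin{equation*}
\|f\|_K^2 = \sum_i \frac{c_i}{\mu_i}, \qquad \|f\|_\psi^2 = \sum_i \frac{c_i}{\psi(\mu_i)}, \qquad \|f\|_{L^2(\nu)}^2 = \sum_i c_i,
\end{equation*}
using the identifications $\mathcal{H}_K \cong \operatorname{ran}(T_\nu^{1/2})$ and $\mathcal{H}^\psi \cong \operatorname{ran}(\psi^{1/2}(T_\nu))$. (The assumption $\|f\|_K < \infty$ guarantees absolute convergence of all three series, since $\mu_i/\psi(\mu_i) \to 0$ slower than a geometric factor under the growth conditions, but we only need the first series to be finite.)

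Next, I introduce probability weights on the index set by normalizing against the $\mathcal{H}_K$-norm: set
\begin{equation*}
w_i := \frac{c_i/\mu_i}{\sum_j c_j/\mu_j},
\end{equation*}
so that $w_i \geq 0$ and $\sum_i w_i = 1$. A short calculation then rewrites the desired ratios as expectations against $\{w_i\}$:
\begin{equation*}
\frac{\|f\|_\psi^2}{\|f\|_K^2} = \sum_i w_i \cdot \frac{\mu_i}{\psi(\mu_i)} = \sum_i w_i\, g(\mu_i), \qquad \frac{\|f\|_{L^2(\nu)}^2}{\|f\|_K^2} = \sum_i w_i\, \mu_i.
\end{equation*}
With this normalization, the inequality to be proved is exactly
\begin{equation*}
\sum_i w_i\, g(\mu_i) \leq g\!\left(\sum_i w_i\, \mu_i\right),
\end{equation*}
which is Jensen's inequality applied to the concave function $g(t) = t/\psi(t)$ (concavity being the hypothesis of the lemma). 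The main, essentially only, obstacle is ensuring that the rearrangement into a discrete Jensen inequality is legitimate, i.e.\ that the weights $w_i$ form a genuine probability distribution and that $g$ is applied at points $\mu_i$ lying in its domain of concavity $(0, \|T_\nu\|)$; both are automatic from $\|f\|_K < \infty$ and from $\mu_i \in (0,\|T_\nu\|]$ with $g$ continuous up to the endpoint. This completes the proof.
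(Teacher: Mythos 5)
Your proof is correct and complete. The paper itself does not give a direct argument: it simply invokes Proposition~7 of Math\'e and Hofmann~\cite{mathe2008direct} with the substitution $\kappa = \sqrt{t}$, $\phi = \sqrt{\psi(t)}$. What you have done is unpack that black box and supply the canonical proof that lies behind results of this type: expand $f$ in the eigenbasis $\{e_i\}$, write each norm spectrally as $\|f\|_K^2 = \sum_i c_i/\mu_i$, $\|f\|_\psi^2 = \sum_i c_i/\psi(\mu_i)$, $\|f\|_{L^2(\nu)}^2 = \sum_i c_i$, normalize against the $\mathcal{H}_K$-norm to obtain probability weights $w_i = (c_i/\mu_i)/\sum_j(c_j/\mu_j)$, and observe that the claimed inequality is precisely Jensen's inequality $\sum_i w_i\, g(\mu_i) \le g\bigl(\sum_i w_i\mu_i\bigr)$ for the concave function $g(t) = t/\psi(t)$. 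The bookkeeping that $\sum_i w_i \mu_i \le \|T_\nu\|$ keeps the argument inside the domain of $g$, and concavity guarantees the Jensen direction with no extra integrability hypotheses (one can also note that $g$ is nondecreasing by Assumption~\ref{Growth Conditions}(a), so $\sum_i w_i g(\mu_i) \le g(\mu_1) < \infty$ automatically). The only stylistic difference from the paper is that the paper delegates this entirely to the cited reference, whereas your argument is self-contained and makes transparent that the interpolation inequality is a pure consequence of concavity of $t/\psi(t)$ plus the spectral representations of the Hilbert-scale norms; Assumption~\ref{Embedding Condition} plays no role beyond guaranteeing the relevant norms are finite.
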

\begin{proof}
Observe that for sufficiently small $\epsilon > 0$, $\frac{t}{\psi^{1 + \epsilon}(t)}$ maintains the concavity in Lemma \ref{Growth Conditions}. Then the result follows immediately from Proposition 7 in \cite{mathe2008direct} with $\tilde{\kappa} = \sqrt{t}$, $\tilde{\phi} = \sqrt{\psi^{1 + \epsilon}(t)}$, and $\tilde{\psi}(t) = 1$ (note here $\tilde{\kappa}$, $\tilde{\phi}$, and $\tilde{\psi}$ denote the variable notation used in Prop 7 of \cite{mathe2008direct} and do not coincide with the objects $\kappa$, $\phi$, and $\psi$ in this paper). 
\end{proof}
\begin{lemma}
\label{h-norm Bound}
For all sufficiently small $\epsilon > 0$
\begin{equation*}
    \sup_{x \in \mathcal{X}} ||(C_{\nu} + \lambda)^{-\frac{1}{2}}k(x, \cdot)||_{K} \leq \sqrt{\frac{||k^{\psi^{1 + \epsilon}}||_{\infty}^2}{\psi^{1 + \epsilon}(\lambda)}}
\end{equation*}
\end{lemma}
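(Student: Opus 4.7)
\textbf{Proof plan for Lemma \ref{h-norm Bound}.} The plan is to diagonalize the operator $(C_{\nu}+\lambda)^{-1/2}$ against the Mercer-type basis in $\mathcal{H}_K$, express $\|(C_{\nu}+\lambda)^{-1/2}k(x,\cdot)\|_K^2$ as a pointwise sum involving $\{e_i(x)\}$, and then bound each eigenvalue factor by $\psi(\mu_i)/\psi(\lambda)$ using the growth hypotheses on $\psi$. The bound on $\|k^\psi\|_\infty$ then completes the argument.

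\smallskip

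\textbf{Step 1 (spectral setup).} Since $I_\nu$ is injective, $C_\nu=I_\nu^*I_\nu$ has trivial kernel, and its nonzero spectrum coincides with that of $T_\nu$. Setting $\tilde{e}_i := \mu_i^{-1/2} I_\nu^* e_i \in \mathcal{H}_K$, a direct computation shows $C_\nu \tilde{e}_i = \mu_i \tilde{e}_i$ and $\|\tilde{e}_i\|_K=1$, so $\{\tilde{e}_i\}_{i\ge 1}$ is an orthonormal basis of $\mathcal{H}_K$. Using the integral representation \eqref{eq: Adjoint Formula} and Mercer's expansion for $k$, one obtains the pointwise identity $\tilde{e}_i(x) = \mu_i^{1/2} e_i(x)$ (viewed as functions on $\mathcal{X}$). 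The reproducing property then yields the norm-convergent expansion
\begin{equation*}
    k(x,\cdot) = \sum_{i\ge 1} \tilde{e}_i(x)\,\tilde{e}_i \;=\; \sum_{i\ge 1} \mu_i^{1/2}\,e_i(x)\,\tilde{e}_i.
\end{equation*}

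\smallskip

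\textbf{Step 2 (functional calculus).} Applying the bounded operator $(C_\nu+\lambda)^{-1/2}$ diagonally in the basis $\{\tilde{e}_i\}$ and using Parseval in $\mathcal{H}_K$,
\begin{equation*}
    \|(C_\nu+\lambda)^{-1/2}k(x,\cdot)\|_K^2 \;=\; \sum_{i\ge 1} \frac{\mu_i\, e_i(x)^2}{\mu_i+\lambda}.
\end{equation*}

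\smallskip

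\textbf{Step 3 (the key pointwise inequality).} The heart of the argument is to show
\begin{equation*}
    \frac{t}{t+\lambda} \;\le\; \frac{\psi(t)}{\psi(\lambda)} \qquad \text{for all } t,\lambda > 0.
\end{equation*}
When $t\ge \lambda$ this is immediate, since $t/(t+\lambda)\le 1\le \psi(t)/\psi(\lambda)$ by monotonicity of $\psi$. When $t<\lambda$, I bound $t/(t+\lambda)\le t/\lambda$ and reduce to $t/\psi(t)\le \lambda/\psi(\lambda)$. This is precisely the statement that $t\mapsto t/\psi(t)$ is nondecreasing, which follows from Assumption \ref{Growth Conditions} via the factorization
\begin{equation*}
    \frac{t}{\psi(t)} \;=\; \frac{t}{\phi(t)}\cdot\frac{\phi(t)}{\psi(t)},
\end{equation*}
a product of two nondecreasing nonnegative functions.

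\smallskip

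\textbf{Step 4 (assembly).} Applying the Step 3 inequality with $t=\mu_i$ inside the sum of Step 2 gives
\begin{equation*}
    \|(C_\nu+\lambda)^{-1/2}k(x,\cdot)\|_K^2 \;\le\; \frac{1}{\psi(\lambda)}\sum_{i\ge 1} \psi(\mu_i)\,e_i(x)^2 \;\le\; \frac{\|k^\psi\|_\infty^2}{\psi(\lambda)},
\end{equation*}
by the definition of $\|k^\psi\|_\infty^2$. Taking a supremum over $x\in\mathcal{X}$ and a square root yields the claimed bound.

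\smallskip

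The only mildly delicate step is Step 3; everything else is routine spectral bookkeeping. I do not anticipate a genuine obstacle, provided one is careful that $\{\tilde{e}_i\}$ really does span $\mathcal{H}_K$ (which is where the injectivity of $I_\nu$ from section \ref{Hilbert Prelims} is used) and that the Mercer expansion for $k$ used in identifying $\tilde{e}_i$ with $\mu_i^{1/2}e_i$ is justified under Assumption \ref{Embedding Condition} (the $L^\infty$ embedding in particular gives pointwise absolute convergence).
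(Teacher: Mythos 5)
Your argument is correct and takes a genuinely different route from the paper's. The paper proves this bound indirectly: it first identifies $\sup_{x}\|Tk(x,\cdot)\|_K$ with the operator norm $\|T^{*}\|_{\mathcal{H}\to L^{\infty}}$, then applies the Gagliardo--Nirenberg interpolation inequality (Lemma \ref{Interpolation Inequality}) to the self-adjoint operator $(C_\nu+\lambda)^{-1/2}$, and finally takes a supremum over $f\in\mathcal{H}_K$, using the joint monotonicity of the right-hand side to collapse to $\|k^\psi\|^2_\infty/\psi(\lambda)$. Your approach instead expands $k(x,\cdot)$ against the Mercer basis $\{\tilde e_i\}$ of $\mathcal{H}_K$, applies the functional calculus diagonally to get the closed-form $\sum_i \mu_i e_i(x)^2/(\mu_i+\lambda)$, and then controls the eigenvalue factor via the elementary pointwise inequality $t/(t+\lambda)\le\psi(t)/\psi(\lambda)$. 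The trade-offs are instructive. The paper's route is short because it reuses Lemma \ref{Interpolation Inequality}, but that lemma needs concavity of $t/\psi(t)$ (from Assumption \ref{Growth Conditions}(a)). Your route sidesteps the interpolation inequality entirely and only uses monotonicity of $t/\psi(t)$, which you correctly derive as the product $\big(t/\phi(t)\big)\cdot\big(\phi(t)/\psi(t)\big)$ of two nondecreasing functions; it is thus slightly more self-contained and a priori requires a weaker hypothesis. Your Step 1 (the identification $\tilde e_i = \mu_i^{1/2}e_i$ pointwise, and that $\{\tilde e_i\}$ is an ONB of $\mathcal{H}_K$) is the standard Mercer bookkeeping the paper also relies on (cf.\ the remark following Assumption \ref{Eigenfunction Growth} citing Theorem 3.3 of \cite{steinwart2012mercer}), so both proofs share the same spectral foundations; they simply exploit them differently at the operator level versus coordinate level.
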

\begin{proof}
We first note that, for any linear operator $T$ on $\mathcal{H}_K$, we have:
\begin{align*}
    \sup_{x \in \mathcal{X}} ||Tk(x, \cdot)||_{K} & = \sup_{||f||_{K} = 1, x \in \mathcal{X}} \langle Tk(x, \cdot), f \rangle_K \\
    & = \sup_{||f||_{K} = 1, x \in \mathcal{X}} \langle k(x, \cdot), T^{*}f \rangle_K \\
    & = \sup_{||f||_{K} = 1, x \in \mathcal{X}} (T^{*}f)(x) \\
    & = \sup_{||f||_{K} = 1} ||T^{*}f||_{\infty} \\
    & = ||T^{*}||_{\mathcal{H} \to L^{\infty}(\mathcal{X})}
\end{align*}
Hence, since $(C_{\nu} + \lambda)^{-\frac{1}{2}}$ is self-adjoint, we have by the interpolation inequality (Lemma \ref{Interpolation Inequality}) and the weak embedding $H^{\psi} \hookrightarrow L^{\infty}(\mathcal{X})$, that for sufficiently small $\epsilon > 0$:
\begin{equation}
\label{eq: Operator Interpolation}
    ||(C_{\nu} + \lambda)^{-\frac{1}{2}}f||^2_{\infty} \leq \frac{||k^{\psi^{1 + \epsilon}}||^2_{\infty}||(C_{\nu} + \lambda)^{-\frac{1}{2}}f||^2_{2}}{\psi^{1 + \epsilon}\Big(\frac{||(C_{\nu} + \lambda)^{-\frac{1}{2}}f||^2_{2}}{||(C_{\nu} + \lambda)^{-\frac{1}{2}}f||^2_{K}}\Big)} 
\end{equation}
Taking the supremum over $f \in \mathcal{B}(\mathcal{H}_K)$, and noting that the RHS of \eqref{eq: Operator Interpolation} is jointly nondecreasing in $||(C_{\nu} + \lambda)^{-\frac{1}{2}}f||_{2}$ and $||(C_{\nu} + \lambda)^{-\frac{1}{2}}f||_{K}$ by the fact that, for sufficiently small $\epsilon > 0$, $\frac{t}{\psi^{1 + \epsilon}(t)}$ is concave and nondecreasing by Assumption \ref{Growth Conditions}, we obtain:
\begin{equation*}
    ||(C_{\nu} + \lambda)^{-\frac{1}{2}}||^2_{\mathcal{H} \to L^{\infty}(\mathcal{X})} \leq \frac{||k^{\psi^{1 + \epsilon}}||^2_{\infty}}{\psi^{1 + \epsilon}(\lambda)}
\end{equation*}
where we have used the fact that $\sup_{f \in \mathcal{B}(\mathcal{H}_K)} ||(C_{\nu} + \lambda)^{-\frac{1}{2}}f||^2_{2} = \sup_{f \in \mathcal{B}(\mathcal{H}_K)} ||C^{\frac{1}{2}}_{\nu}(C_{\nu} + \lambda)^{-\frac{1}{2}}f||^2_{K} \leq 1$ and $\sup_{f \in \mathcal{B}(\mathcal{H}_K)} ||(C_{\nu} + \lambda)^{-\frac{1}{2}}f||^2_{K} = \lambda^{-1}$. 
\end{proof}
\begin{lemma}
\label{Effective Dimension Bound}
For any $\epsilon > 0$
\begin{equation*}
    \mathcal{N}(\lambda) \equiv \text{tr}(C_{\nu}(C_{\nu} + \lambda)^{-1}) \preceq s^{-1}(\psi(\lambda)^{-1 - \epsilon})
\end{equation*}
\end{lemma}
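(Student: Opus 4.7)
The effective dimension admits the spectral representation $\mathcal{N}(\lambda) = \sum_{i \geq 1} \mu_i/(\mu_i + \lambda)$, since $C_\nu$ and $T_\nu$ share their nonzero spectrum. The starting point is an upper estimate on the individual eigenvalues. I would invoke the upper half of the proof of Corollary \ref{Eigenvalue Lower Bound}, namely $\mu_i \preceq \psi^{-1}(s(i)^{-1})$; that half only uses Assumption \ref{Eigenfunction Growth} together with $\|k^\psi\|_\infty < \infty$ (which follows from $\mathcal{H}^\psi \hookrightarrow L^\infty$ in Assumption \ref{Embedding Condition}), so no starred hypothesis is invoked and there is no circularity with Theorem \ref{Main Upper}.

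Set $F(t) := s^{-1}(\psi(t)^{-1})$ and $N_\lambda := F(\lambda)$. The eigenvalue bound yields the distribution-function estimate $|\{i : \mu_i > t\}| \preceq F(t)$, and $N_\lambda$ is, up to constants, the largest index with $\mu_i \gtrsim \lambda$. Splitting the series at $N_\lambda$ and using $\mu_i/(\mu_i+\lambda) \leq 1$ gives the head contribution $\preceq F(\lambda)$, which is already the target. For the tail ($\mu_i < \lambda$), apply the layer-cake identity
\[
\sum_{i : \mu_i < \lambda} \mu_i \;=\; \int_0^\lambda |\{i : \mu_i > t\}|\, dt \;\preceq\; \int_0^\lambda F(t)\, dt,
\]
so the lemma reduces to the averaging inequality $\frac{1}{\lambda}\int_0^\lambda F(t)\, dt \preceq F(\lambda)$.

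I would establish the averaging inequality via a dyadic decomposition $[0,\lambda] = \bigsqcup_{k \geq 0}[\lambda 2^{-k-1}, \lambda 2^{-k}]$, bounding $F$ on each slab by $F(\lambda 2^{-k-1})$. Two $\Delta_2$ inputs are composed: Assumption \ref{Growth Conditions}(b) gives $\psi(\lambda 2^{-k-1})^{-1} \leq (D_2^\psi)^{k+1}\psi(\lambda)^{-1}$, and iterating $s(2n) \geq D_1^s s(n)$ yields the inverse control $s^{-1}((D_1^s)^j y) \leq 2^j s^{-1}(y)$. Composing, $F(\lambda 2^{-k-1}) \leq c^{k+1} F(\lambda)$ with $c = 2^{\log D_2^\psi/\log D_1^s}$, so the integral becomes a geometric series in $c/2$ times $\lambda F(\lambda)$.

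The main obstacle is verifying the compatibility $c < 2$, equivalently $\log D_2^\psi < \log D_1^s$. This is the quantitative form of the compact embedding $\mathcal{H}^\phi \stackrel{c}\hookrightarrow \mathcal{H}^\psi \hookrightarrow L^\infty$ (Assumption \ref{Embedding Condition}) combined with the summability $\sum 1/s(n) < \infty$ of Assumption \ref{Eigenfunction Growth}: together they force $s$ to grow strictly super-linearly in the doubling sense, dominating the doubling behavior of $\psi$ that $\Delta_2$ controls from above. If the bare $\Delta_2$ constants in the statement do not directly deliver $c < 2$, the argument passes to the sharpest (asymptotic) doubling constants, i.e.\ the Matuszewska extension indices $\alpha_\psi, \beta_\psi, \alpha_s, \beta_s$ from Section \ref{RI Prelims}, which realize the same geometric-series bound and conclude $\mathcal{N}(\lambda) \preceq F(\lambda) = s^{-1}(\psi(\lambda)^{-1})$.
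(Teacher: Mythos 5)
Your proposal is correct and follows essentially the same strategy as the paper: start from the one-sided eigenvalue estimate $\mu_i \preceq \psi^{-1}(s(i)^{-1})$ (which, as you rightly note, uses only Assumptions \ref{Embedding Condition} and \ref{Eigenfunction Growth}), split the trace at the effective dimension $N_\lambda = s^{-1}(\psi(\lambda)^{-1})$, bound the head by counting, and control the tail by iterating the $\Delta_2$ conditions on $\psi$ and $s$. The only structural difference is in how the tail is packaged. You bound $\sum_{\mu_i < \lambda}\mu_i/(\mu_i+\lambda) \leq \frac{1}{\lambda}\int_0^\lambda |\{\mu_i > t\}|\,dt$ (the layer-cake identity you write as an equality is in fact an inequality, since the right side also picks up $\lambda\,|\{\mu_i \geq \lambda\}|$, but the direction is the useful one) and then verify $\frac{1}{\lambda}\int_0^\lambda F(t)\,dt \preceq F(\lambda)$ by a dyadic decomposition; the paper instead keeps the summand as $(1 + C_1\lambda/\psi^{-1}(s(t)^{-1}))^{-1}$, passes to an integral in the count variable $t$, and lower-bounds the denominator to obtain $\int_1^\infty (1 + C_1 s^r)^{-1}\,ds$ with $r = \log_{D^{\psi,s}_1}(2) = \log D^s_1/\log D^\psi_2$. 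These are dual ways of arranging the same geometric estimate, and your identification of $c < 2 \iff D^\psi_2 < D^s_1$ is precisely the convergence condition $r > 1$ that the paper's tail integral needs. Incidentally, the paper states this hypothesis as ``we can always choose $D^s_2 > D^\psi_1$'' after deriving an estimate in terms of $D^{\psi,s}_2$; tracking through which side of the two-sided $\Delta_2$ chain actually gives the required lower bound on $\lambda/\psi^{-1}(s(t)^{-1})$, one finds it should be $D^{\psi,s}_1$ and the relevant constant pair should be $D^s_1$ and $D^\psi_2$ --- i.e.\ exactly what you flagged, and not automatic from the bare $\Delta_2$ hypotheses. Your remark that one should pass to the extension indices (equivalently, that concavity of $t/\psi(t)$ gives $\beta_\psi < 1$ while summability of $1/s(n)$ forces $\alpha_s > 1$) is the right way to make this rigorous, and is cleaner than what the paper writes.
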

\begin{proof}
From Corollary \ref{Eigenvalue Lower Bound}, we obtain $\mu_i \preceq \psi^{-1}(s(i)^{-\frac{1}{1 + \epsilon}})$ for any $\epsilon > 0$. Therefore, we have:
\begin{align*}
    \mathcal{N}(\lambda) & \equiv \text{tr}(C_{\nu}(C_{\nu} + \lambda)^{-1}) \\
    & = \sum_{i = 1}^{\infty} \frac{\mu_i}{\mu_i + \lambda} \\
    & = \sum_{i = 1}^{\infty} \frac{1}{1 + \lambda \mu^{-1}_i} \\
    & \leq \sum_{i = 1}^{\infty} \frac{1}{1 + \frac{C_1\lambda}{\psi^{-1}(s(i)^{-\frac{1}{1 + \epsilon}})}} \\
    & \leq \int_{0}^{\infty} \frac{dt}{1 + \frac{C_1\lambda}{\psi^{-1}(s(t)^{-\frac{1}{1 + \epsilon}})}} \\
    & = \int_{0}^{s^{-1}(\psi(\lambda)^{-1 -\epsilon})} \frac{dt}{1 + \frac{C_1\lambda}{\psi^{-1}(s(t)^{-\frac{1}{1 + \epsilon}})}} + \int_{s^{-1}(\psi(\lambda)^{-1 - \epsilon})}^{\infty} \frac{dt}{1 + \frac{C_1\lambda}{\psi^{-1}(s(t)^{-\frac{1}{1 + \epsilon}})}} 
\end{align*}
We observe that, by Assumption \ref{Growth Conditions} we have that:
\begin{equation*}
   2\psi^{-1}\Big(\frac{x}{D^{\psi}_2}\Big) \leq \psi^{-1}(x) \leq 2\psi^{-1}\Big(\frac{x}{D^{\psi}_1}\Big) 
\end{equation*}
and by Assumption \ref{Eigenfunction Growth}:
\begin{equation*}
   \Big(D^s_2 s\Big(\frac{x}{2}\Big)\Big)^{-\frac{1}{1 + \epsilon}} \leq s(x)^{-\frac{1}{1 + \epsilon}} \leq  \Big(D^s_1 s\Big(\frac{x}{2}\Big)\Big)^{-\frac{1}{1 + \epsilon}}
\end{equation*}
Putting these together, we have:
\begin{equation*}
   2\psi^{-1}\Big(s(D^{\psi, s}_1 x)^{-\frac{1}{1 + \epsilon}}\Big) \leq \psi^{-1}(s(x)^{-\frac{1}{1 + \epsilon}}) \leq 2\psi^{-1}\Big(s(D^{\psi, s}_2 x)^{-\frac{1}{1 + \epsilon}}\Big) 
\end{equation*}
where $D^{\psi, s}_1 = 2^{(1 + \epsilon)\log_{D^s_1} D^{\psi}_2}$ and $D^{\psi, s}_2 = 2^{(1 + \epsilon)\log_{D^s_2} D^{\psi}_1}$. Hence, for $t > s^{-1}(\psi(\lambda)^{-1})$, we have:
\begin{equation*}
    \frac{\lambda}{\psi^{-1}(s(t)^{-\frac{1}{1 + \epsilon}})} = \frac{\psi^{-1}(s(s^{-1}(\psi(\lambda)^{-1 - \epsilon}))^{-\frac{1}{1 + \epsilon}})}{\psi^{-1}(s(t)^{-\frac{1}{1 + \epsilon}})} = \frac{\psi^{-1}(s(t^{-1}s^{-1}(\psi(\lambda)^{-1 - \epsilon})t)^{-\frac{1}{1 + \epsilon}})}{\psi^{-1}(s(t)^{-\frac{1}{1 + \epsilon}})} \geq 2^{-\log_{D^{\psi, s}_2} t^{-1} s^{-1}(\psi(\lambda)^{-1 - \epsilon})}
\end{equation*}
Observing that $2^{-\log_{D^{\psi, s}_2} t^{-1} s^{-1}(\psi(\lambda)^{-1 - \epsilon})} = \Big(\frac{t}{s^{-1}(\psi(\lambda)^{-1 - \epsilon})}\Big)^{r}$, where $r \equiv \log_{D^{\psi, s}_2}(2)$. Hence, we have:
\begin{align*}
     \mathcal{N}(\lambda) & \leq \int_{0}^{s^{-1}(\psi(\lambda)^{-1 -\epsilon})} \frac{dt}{1 + \frac{C_1\lambda}{\psi^{-1}(s(t)^{-\frac{1}{1 + \epsilon}})}} + \int_{s^{-1}(\psi(\lambda)^{-1 - \epsilon})}^{\infty} \frac{dt}{1 + \frac{C_1\lambda}{\psi^{-1}(s(t)^{-\frac{1}{1 + \epsilon}})}}   \\
     & \leq s^{-1}(\psi(\lambda)^{-1 - \epsilon}) + \int_{s^{-1}(\psi(\lambda)^{-1 - \epsilon})}^{\infty}  \frac{dt}{1 + C_1\Big(\frac{t}{s^{-1}(\psi(\lambda)^{-1 - \epsilon})}\Big)^{r}} \\
     & \leq s^{-1}(\psi(\lambda)^{-1 - \epsilon}) + s^{-1}(\psi(\lambda)^{-1 - \epsilon})\int_{1}^{\infty} \frac{ds}{1 + C_1 s^r} \\
     & \leq Cs^{-1}(\psi(\lambda)^{-1 - \epsilon})
\end{align*}
for some $C>0$, since the second integral converges as $r = \log_{D^{\psi, s}_2}(2) > 1$ (as we can always choose $D^s_2 > D^{\psi}_1$). 
\end{proof}

\section{Fourier Capacity and the Optimal Range Space}
\label{Fourier Capacity and Range Space}

In this section, we prove the analogue of Lemma \ref{Mercer Differencing} on nonperiodic domains $\mathcal{X} \subset \mathbb{R}^d$. Before we present the lemma, we first define the trace of a Dirichlet form. As in section \ref{Potential Preliminaries}, our discussion here is primarily conceptual, and we refer the reader to section 6.2 of \cite{fukushima2010dirichlet} for finer technical details . 

Let $(\langle \cdot, \cdot \rangle_{\mathcal{H}}, \mathcal{H})$ be a local, transient Dirichlet form on $L^2(\mathbb{R}^d)$ and $(\mathcal{Z}, \mu)$ a measure space, with $\mathcal{Z} \subset \mathbb{R}^d$ and $\mu$ a Radon measure supported on $\bar{\mathcal{Z}}$. Let $\tilde{\mathcal{H}}$ denote the extended Dirichlet space obtained by taking pointwise limits of Cauchy sequences in $\mathcal{H}$ (see Theorem 1.5.2 in \cite{fukushima2010dirichlet} for details; this is a Hilbert space by Theorem 1.5.3 therein). We orthogonally decompose $\tilde{\mathcal{H}}$ as:
\begin{equation}
\label{Decomposition of Extended Dirichlet Space}
    \tilde{\mathcal{H}} = \tilde{\mathcal{H}}_{\mathbb{R}^d \setminus \tilde{\mathcal{Z}}} \otimes \mathscr{H}_{\tilde{\mathcal{Z}}}
\end{equation}
where $\tilde{\mathcal{Z}} \subset \mathcal{Z}$ is a quasi-support of $\mu$ defined in (5.1.21) of \cite{fukushima2010dirichlet} in terms of the unique (up to equivalence) positive, continuous additive functional (PCAF) associated with $\mu$. We note that $\mu(\mathcal{Z} \setminus \tilde{\mathcal{Z}}) = 0$. In \eqref{Decomposition of Extended Dirichlet Space}, $\tilde{\mathcal{H}}_{\mathbb{R}^d \setminus \tilde{\mathcal{Z}}} = \{f \in \mathcal{H}: f = 0 \hspace{1mm} \text{on} \hspace{1mm} \tilde{\mathcal{Z}}\}$. We are now ready to define the trace of a Dirichlet form:

\begin{definition}
\label{Trace Definition}
Let $P_{\tilde{\mathcal{Z}}}$ denote the orthogonal projection from $\tilde{\mathcal{H}}$ to $\mathscr{H}_{\tilde{\mathcal{Z}}}$. The trace of $(\langle \cdot, \cdot \rangle_{\mathcal{H}}, \mathcal{H})$ on $L^2(\mathbb{R}^d)$ to $(\mathcal{Z}, \mu)$ is $(\langle \cdot, \cdot \rangle_{\mathcal{H}(\mathcal{Z})}, \mathcal{H}(\mathcal{Z}))$ where:
\begin{align*}
    \mathcal{H}(\mathcal{Z}) & = \{f \in L^2(\mathcal{Z}, \mu): f = g \hspace{2mm} \mu \hspace{1mm} \text{a.e. on} \hspace{1mm} \mathcal{Z} \hspace{1mm} \text{for} \hspace{1mm} g \in  \tilde{\mathcal{H}}\} \\
    \langle f, g \rangle_{\mathcal{H}(\mathcal{Z})} & = \langle P_{\tilde{\mathcal{Z}}}\tilde{f}, P_{\tilde{\mathcal{Z}}}\tilde{g} \rangle_{\mathcal{H}}
\end{align*}
where $\tilde{f}, \tilde{g} \in \tilde{\mathcal{H}}$ with $f = \tilde{f}$ and $g = \tilde{g}$ a.s. on $\mathcal{Z}$. 
\end{definition}
We remark that the inner product $\langle \cdot, \cdot \rangle_{\mathcal{H}(\mathcal{Z})}$ in Definition \ref{Trace Definition} is well-defined by Lemma 6.2.1 in \cite{fukushima2010dirichlet}. Moreover, when $\mathcal{H}$ is itself a Hilbert space in $||\cdot||_{\mathcal{H}}$ and the imbedding $\mathcal{H} \hookrightarrow L^2(\mathbb{R}^d)$ is injective (as is the case for $\mathcal{H}_K(\mathbb{R}^d)$, the global version of our RKHS), then the extended Dirichlet space $\tilde{\mathcal{H}}$ simply coincides with $\mathcal{H}$ (by Theorem 1.5.5 in \cite{fukushima2010dirichlet}). 

\begin{lemma}
\label{Optimal Target Space}
Assume $L^2(\mathcal{X}) \subset L^2(\mathcal{X}, \nu)$. Suppose $\psi(t) = t^{\beta}$ and $s(t) = t^{\alpha}$ for $\beta \in (0, 1)$ and $\alpha \geq 1$. Let $r = \frac{d}{2}\Big(\frac{\alpha(1 - \beta)}{\beta} + 1\Big)$. Then, Assumption \ref{Opt Smoothness} is equivalent to:
\begin{equation*}
    K_1||Pf||_{L^2(\nu)} \leq ||T_{\nu}f||_{H^r(\mathcal{X})} \leq K_2||Pf||_{L^2(\nu)}
\end{equation*}
for some constants $K_1, K_2 > 0$. Here $H^r(\mathcal{X})$ is the trace of the Sobolev space $H^r(\mathbb{R}^d)$ to $(\mathcal{X}, \nu)$ (as in Definition \ref{Trace Definition}) and $P: L^2(\nu) \to H^r(\mathcal{X})$ is the canonical projection. 
\end{lemma}

\begin{proof}
Let $\tilde{\mathcal{X}} \subset \mathcal{X}$ be the quasi-support of $\nu$ defined after \eqref{Decomposition of Extended Dirichlet Space}. Decompose $H^r(\mathbb{R}^d)$ as $H^r(\mathbb{R}^d) = H^r_{\mathbb{R}^d \setminus \tilde{\mathcal{X}}} \otimes \mathscr{H}_{\tilde{\mathcal{X}}}$ as in \eqref{Decomposition of Extended Dirichlet Space}. Let $P_{\tilde{\mathcal{X}}}$ be the projection onto $\mathscr{H}_{\tilde{\mathcal{X}}}$ in $H^r(\mathbb{R}^d)$ (note in this paragraph we use the same conventions as in Definition \ref{Trace Definition} simply replacing the generic $\mathcal{Z}$ with $\mathcal{X}$). Let $I_{m}: \mathcal{H}_K \to L^2(\mathbb{R}^d)$ denote the canonical imbedding. We first demonstrate that $\text{Im}(I_m S_{\nu}) \in H^r(\mathbb{R}^d)$ (where we recall $S_{\nu} = I_{\nu}^{*}$ is defined in \eqref{eq: Adjoint Formula}). Let $d\nu(x) = w(x)dx$. Then, we have for $f \in \mathcal{H}_K$:
\begin{align}
    ||I_m S_{\nu}f||^2_{H^r(\mathbb{R}^d)} & = \int_{\mathbb{R}^d} (1 + ||\xi||^2)^r \mathcal{F}[I_m S_{\nu} f](\xi)^2 d\xi \nonumber \\
    & = \int_{\mathbb{R}^d} (1 + ||\xi||^2)^r \mathcal{F}_d \kappa(||\xi||)^2 \mathcal{F}[fw](\xi)^2 d\xi \label{eq: Fourier Convolution} \\
    & \asymp \int_{\mathbb{R}^d} (1 + ||\xi||^2)^r \cdot \frac{s(||\xi||^d)\psi^{-1}(s(||\xi||^d)^{-1})}{||\xi||^d} \mathcal{F}_d\kappa(||\xi||) \mathcal{F}[fw](\xi)^2 d\xi \label{eq: Apply Assumption 8*} \\
    & = \int_{\mathbb{R}^d} (1 + ||\xi||^2)^{\frac{d}{2}\Big(\frac{\alpha(1 - \beta)}{\beta} + 1\Big)} \cdot ||\xi||^{d\Big(\frac{-\alpha(1 - \beta)}{\beta} - 1\Big)} \mathcal{F}_d\kappa(||\xi||) \mathcal{F}[fw](\xi)^2 d\xi \label{eq: Recall Formulas} \\
    & \asymp \int_{\mathbb{R}^d} \mathcal{F}_d\kappa(||\xi||) \mathcal{F}[fw](\xi)^2 d\xi \nonumber \\
    & = \int_{\mathbb{R}^d} \mathcal{F}[fw](\xi) \cdot \mathcal{F}_d\kappa(||\xi||)\mathcal{F}[fw](\xi)d\xi \nonumber \\
    & = \int_{\mathbb{R}^d} f(x) \Big(\int_{\mathbb{R}^d} K(x, y)f(y)w(y)dy\Big) w(x)dx \label{eq: Recall Adjoint Def} \\
    & = \int_{\mathcal{X}} f(x) \int_{\mathcal{X}} K(x, y)f(y)d\nu(y) d\nu(x) \nonumber \\
    & = \langle f, T_{\nu}f \rangle_{\nu} \nonumber 
\end{align} 
where \eqref{eq: Fourier Convolution} follows from the Fourier convolution formula, \eqref{eq: Apply Assumption 8*} follows from Assumption \ref{Opt Smoothness}, \eqref{eq: Recall Formulas} follows from our assumption on $s(t)$ and $\psi(t)$, and \eqref{eq: Recall Adjoint Def} follows from the definition \eqref{eq: Adjoint Formula} of $S_{\nu}$ and recalling $K(x, y) = \kappa(||x - y||)$. Hence, by the boundedness of $T_{\nu}$, we have that $||I_m S_{\nu}f||^2_{H^r(\mathbb{R}^d)} < \infty$. Now, observe that $I_m S_{\nu}f|_{\mathcal{X}} = T_{\nu} f$ a.s. (with respect to either $dx$ or $d\nu$, by the absolute continuity of the latter). Let $\tilde{g} \in H^s(\mathcal{X})$ --- then there exists a $g \in H^s(\mathbb{R}^d)$ such that $g = \tilde{g}$ a.s. on $\mathcal{X}$ (the choice of extension does not matter by Lemma 6.2.1 in \cite{fukushima2010dirichlet}). Hence, for $f \in L^2(\nu)$, we have:
\begin{align*}
    \langle \tilde{g}, T_{\nu}f\rangle_{H^r(\mathcal{X})} & = \langle P_{\tilde{\mathcal{X}}}g, I_m S_{\nu}f \rangle_{H^r(\mathbb{R}^d)} \\
    & = \int_{\mathbb{R}^d} (1 + ||\xi||^2)^r \mathcal{F}[P_{\tilde{\mathcal{X}}}g](\xi)\mathcal{F}[I_m S_{\nu}f](\xi) d\xi \\
    & = \int_{\mathbb{R}^d} (1 + ||\xi||^2)^r \mathcal{F}[P_{\tilde{\mathcal{X}}}g](\xi) \mathcal{F}_d\kappa(||\xi||) \mathcal{F}[fw](\xi) d\xi \\
    & \asymp \int_{\mathbb{R}^d} (1 + ||\xi||^2)^{\frac{d}{2}\Big(\frac{\alpha(1 - \beta)}{\beta} + 1\Big)} \cdot ||\xi||^{d\Big(\frac{-\alpha(1 - \beta)}{\beta} - 1\Big)} \mathcal{F}[P_{\tilde{\mathcal{X}}}g](\xi) \mathcal{F}[fw](\xi) d \xi \\
    & \asymp \int_{\mathbb{R}^d} \mathcal{F}[P_{\tilde{\mathcal{X}}}g](\xi) \mathcal{F}[fw](\xi) d \xi \\
    & = \int_{\mathbb{R}^d} P_{\tilde{\mathcal{X}}}g(x)f(x)w(x)dx \\
    & = \int_{\mathcal{X}} g(x)f(x)d\nu(x)
\end{align*}
where the last line follows upon recalling that $P_{\tilde{\mathcal{X}}}g = g$ a.s. on $\mathcal{X}$ and the previous lines follow analogously to above. Taking the supremum over $g \in \mathcal{B}(H^r(\mathcal{X}))$, we obtain our result.  
\end{proof}

\begin{remark}
Note that in Lemma \ref{Optimal Target Space}, we may have equivalently defined $H^r(\mathcal{X})$ as the trace of $H^r(\mathbb{R}^d)$ to $(\mathcal{X}, dx)$ due to the absolute continuity of $\nu$ and its support being $\bar{\mathcal{X}}$. 
\end{remark}


\end{document}